\theoremstyle{remark}
\newtheorem{remark}{Remark}[section]
\theoremstyle{definition}
\newtheorem{theorem}{Theorem}[section]
\newtheorem{definition}[theorem]{Definition}
\newtheorem{proposition}[theorem]{Proposition}
\newtheorem{lemma}[theorem]{Lemma}
\newtheorem{hypothesis}[theorem]{Hypothesis}
\DeclareMathOperator{\R}{\mathbb{R}}
\DeclareMathOperator{\C}{\mathcal{C}}
\DeclareMathOperator{\N}{\mathbb{N}}
\DeclareMathOperator{\esssup}{ess\,sup}
\DeclareMathOperator{\ra}{\rightarrow}
\DeclareMathOperator{\de}{\text{d}}
\newcommand{\sym}{\text{sym}}
\newcommand{\f}[1]{{\pmb{ #1}}}
\DeclareMathOperator{\di}{\nabla\cdot}
\newcommand{\tet}{\tilde{\theta}}
\newcommand{\tmu}{\tilde{\mu}}
\newcommand{\tp}{\tilde{\varphi}}
\newcommand{\tu}{\tilde{\f u}}
\newcommand{\ov}[1]{{\overline{ #1}}}
\renewcommand{\t}{\partial_t}
\renewcommand{\log}{\ln}
\newcommand{\fhi}{\varphi}
\title{Analysis of a thermodynamically consistent {Navier--Stokes--Cahn--Hilliard }model}
\author{Robert Lasarzik}
\date{Version of the draft: \today}
\begin{document}

\maketitle
\begin{abstract}
In this paper, existence of generalized
solutions to a thermodynamically consistent Navier--Stokes--Cahn--Hilliard model introduced in \cite{giulio} is proven in any space dimension.
The generalized solvability concepts are measure-valued and dissipative solutions. The measure-valued formulation incorporates an entropy inequality and an energy inequality instead of an energy balance in a nowadays standard way, the Gradient flow of the internal variable is fulfilled in a weak  and the momentum balance in a measure-valued sense. 
 In the dissipative formulation, the distributional relations of the momentum balance and the energy as well as entropy inequality are replaced by a relative energy inequality. 
%
%
Additionally, we prove the weak-strong uniqueness of the proposed solution concepts 
and that all generalized solutions with additional regularity are indeed strong solutions. 

{\em Keywords:
Weak-strong uniqueness, phase transition, {Navier--Stokes}, {Cahn--Hilliard}, existence, thermodynamical consistent, dissipative solutions, relative energy
}
\end{abstract}

\tableofcontents
\section{Introduction}
This paper is concerned with the analysis of the initial boundary-value problem for the following PDE system
\begin{subequations}\label{eq}
\begin{align}
\t \f u + ( \f u \cdot \nabla) \f u + \nabla p -\di \left (\nu(\theta) \nabla \f u  \right  ) ={}& -\varepsilon  \di ( \nabla \varphi \otimes\nabla  \varphi ) \,, \quad  \label{eq1}\\
\di \f u ={}& 0 \,, \label{eq4}\\
c_H(\theta) ( \t \theta + ( \f u \cdot \nabla ) \theta ) + \theta ( \t \varphi + (\f u \cdot \nabla) \varphi ) - \di ( \kappa( \theta ) \nabla \theta ) = {}&\nu ( \theta ) | \nabla \f u|^2 + |\nabla \mu|^2    \,, \label{eq3}\\
\t \varphi + ( \f u \cdot \nabla ) \varphi = \Delta \mu\,,  \quad \mu ={}& - \varepsilon\Delta \varphi + \frac{1}{\varepsilon} F'(\varphi ) -\theta \,, \label{eq2}
%
%
%
\end{align}
which describes phase transition phenomena in incompressible fluids. We consider a bounded domain $\Omega\subset \R^d$ or $d\geq 2$ with sufficiently smooth boundary and fix a time interval $[0,T]$. The state variables are the \textit{velocity field}~$\f u$, the \textit{temperature} $\theta $, and the \textit{order parameter} $\varphi$ describing the locally attained phase. 
The pressure is denoted by $p$, the \textit{chemical potential} $\mu$ is mainly an auxiliary variable.  
 The variable $F$ denotes some energy density for the order parameter, $\kappa(\theta) >0$ the  heat conductivity, and $\nu(\theta) >0 $ the viscosity.
 The parameter $\varepsilon$ is related to the interface thickness, which should be small. The variable $c_H$ stands for the specific heat, which is going to be made precise later. 
 For the fluid flow, we choose homogeneous Dirichlet boundary conditions and the other variables are equipped 
 with homogeneous {Neumann} boundary conditions,\textit{ i.e.,}
\begin{align}
\f u = 0 \,, \quad\f n \cdot   \kappa(\theta)  \nabla\theta = 0  \,, \quad \f n \cdot \nabla \varphi = 0 \,, \quad \f n \cdot \nabla \mu = 0\,, \qquad \text{on } \partial \Omega \times (0,T)\,\label{boundary}
\end{align}
and initial conditions
\begin{align}
\f u (0) = \f u_0 \,, \quad \theta(0) = \theta_0 \,, \quad \varphi(0) = \varphi_0 \qquad \text{in } \Omega \,.\label{initial}
\end{align}
\end{subequations}

The system under consideration couples a Navier--Stokes-like equation \eqref{eq1} and~\eqref{eq4} with a Cahn--Hilliard system~\eqref{eq2} and an internal energy balance~\eqref{eq3}. 
On the one hand, this system is interesting in terms of its applications, which reach from modeling of cancer evolution and treatment~\cite{cancer}  over fluid flow for mixtures~\cite{fluidflow,lubo} and for instance modelling of industrial processes like  $3$d printing~\cite{threeDprint}. 
On the other hand, it serves as an interesting prototype of a thermodynamical consistent model of a complex fluid. The energy balance is coupled to  Navier--Stokes-like equations and this in addition to a Gradient Flow for the internal variable. Such systems are omnipresent in applications as well as analysis in the form of two fluid flow, anisotropic fluids like liquid crystals or polymers and additionally, it can be seen as a special form of the so-called GENERIC modeling approach (see~\cite{generic}).

The aim of the article at hand is thus also twofold. On the one hand, we will provide a sound mathematical treatment of this special system, but on the other hand, we 
see this system as a prototype of GENERIC systems and want to infer knowledge on how to define a reasonable solutions concepts for such system. 

\subsection{Review of known results\label{sec:rev}}
The Cahn--Hilliard system originally proposed in~\cite{chanhilliard} received a lot of interest in recent years (see for instance~\cite{miranville}). 
There are lots of works concerning the constant temperature case of~\eqref{eq} (see \textit{e.g.}~\cite{abels,cao,strat} and references therein) and also a recent work on the case of vanishing velocity field (see~\cite{zafferi}). 
Even though there are many publications on the coupled Navier--Stokes Cahn--Hilliard model in the constant temperature case, there are very few publications on the non-isothermal case. 
The considered model~\eqref{eq} was introduced in~\cite{giulio}, where also the existence of weak solutions (fulfilling an additional energy balance) was shown under  additional growth conditions for the heat capacity and the heat conductivity (the assumption are precisely $\delta \in [1/2,1)$ and $\beta\geq 2$, check Hypothesis~\ref{hypo} below for the definition of these parameters). 
In~\cite{euleri} the same authors where able to show existence of weak solutions in the two dimensional case under milder assumptions. 
In~\cite{larosc} a simpler model was considered, an Allen--Cahn type equation with energy balance. But the techniques there are similar to the ones used in this article, namely relying on the \textit{relative energy approach}. 
So far, there is no solution concept available for the relevant case, where $ \kappa(\theta)$ and $c_H(\theta)$ in~\eqref{eq3} are constant. 
In the article at hand, we provide a remedy by considering \textit{measure-valued} and  \textit{dissipative solutions} especially for this case. 

The concept of \textit{Young measure-valued solutions} was first introduced by Tartar~\cite{tartar}. Later on, the concept of \textit{generalized Young measures} was used by DiPerna and Majda~\cite{DiPernaMajda} to define generalized solutions to the Euler equations. These generalized Young measures capture oscillation and concentration effects for sequences bounded in $L^1$.
 Such generalized Young measures have been applied to the complete Euler system~\cite{breit} or the Ericksen--Leslie system equipped with the Oseen--Frank energy~\cite{meas}. 

The concept of a dissipative solution was first introduced by P.-L.~Lions in the context of the Euler equations~\cite[Sec.~4.4]{lionsfluid}, with ideas originating from the Boltzmann equation~\cite{LionsBoltzman}. 
It is also applied in the context of incompressible viscous
electro-magneto-hydro\-dy\-na\-mics (see%
~\cite{raymond})
 and equations of viscoelastic diffusion in polymers~\cite{viscoelsticdiff} as well as liquid cystals~\cite{diss}.
A dissipative solution relies on an appropriate relative energy inequality, which may  be interpreted as a \textit{variation of the energy-dissipation mechanism} of the system. In this solution concept, the momentum conservation is not fulfilled in some distributional sense, but rather in terms of a variation of the underlying energy dissipation principle.

 Beside the fact, that this solution concept complies with the minimum assumptions to a solution concept of existence and weak-strong uniqueness, it appears naturally when considering singular limits~\cite{saint} and in comparisson to measure-valued solutions, it is numerically traceable. In case of anisotropic complex fluids,  in~\cite{approx} the convergence of a semi-discretization was shown and an associated optimal control problem was solved via the dissipative solution concept, whereas in~\cite{nematicelectro} it was proven for the more-involved system describing nematic electrolytes that the solutions to a fully discrete finite element discretization  converge to a dissipative solution in the limit. 
In these works, it was observed that natural discretizations complying with the properties of the system, like energetic or entropic principles, as well as algebraic restrictions converge naturally to a dissipative solution instead of a measure valued solution (see~\cite{nematicelectro} and~\cite{approx} for details).
In the article at hand, we additionally show that a dissipative solution enjoying additional regularity is in fact a strong solution. Thus, it can be argued that the presented generalized solution concepts are qualitatively the same in terms of \textit{existence}, \textit{stability}, \textit{weak-strong uniqueness}, and \textit{regularity implying uniqueness}.  While dissipative solutions do not fulfill the equation, they have less degrees of freedom compared to measure-valued solutions. 
Since dissipative solutions appear natural as a singular limit, it may be worth considering the singular sharp interface limit of $\varepsilon\ra0$ in the relative energy inequality~(see~\eqref{relenenergy} below). 
Especially since there  exists a formulation of a relative energy for the sharp interface case (see~\cite{julian1}) and even a convergence proof for vanishing interface thickness using this technique (see~\cite{julian2}).

%
 
In this article, we want to consider the system~\eqref{eq} as a \textit{prototype system} for a more general GENERIC system, \textit{i.e.}, it is a thermodynamical consistent system coupling the incompressible Navier--Stokes system to an energy balance and an additional Gradient-Flow-like equation for the evolution of an internal variable. 
In the sequel, the existence of weak and measure-valued solutions is proven, as well as the weak-strong (or rather measure-valued-strong) uniqueness of theses solutions. Weak solutions only emerge  under additional (possibly unnatural) regularizing terms appearing in heat-capacity or heat conduction. 
The measure-valued formulation consists of an entropy production rate in a distributional sense (see~\eqref{weakentro} below), an energy inequality (see~\eqref{energyin} below), and an entropy inequality (see~\eqref{entropy} below),  in a nowadays standard way (compare for instance to~\cite{singular,larosc}) as well as the weak formulation of the Cahn--Hilliard equation (see~\eqref{phaseeq} and~\eqref{mueq} below). 
The Navier--Stokes-like equation is fulfilled in a measure-valued sense~(see~\eqref{weakmomentum} below) unless additional regularity is available~(compare to Theorem~\ref{thm:existence} below). 
Additionally, the existence of  dissipative solutions is shown, which inherit the weak-strong uniqueness property by construction. 
In comparison to the measure-valued formulation, in the definition of  dissipative solutions the momentum balance as well as the energy and the entropy inequality are replaced by a relative energy inequality~(see~\eqref{relenenergy} below).  
This deprives us from establishing strong convergence of an approximate sequence of the velocity field $\f u$, but the formulation is weakly sequential stable with respect to the weak topology for the velocity fields. 
%
%
The dissipative formulation has the advantage that it does not rely on a measure in the \textit{elastic stress tensor}. This makes the formulation more attainable for structure inheriting discretizations (compare~\cite{nematicelectro,approx}). The formulation still relies on a measure-valued formulation of the entropy balance, but this is mainly to have some control of the time derivative of the entropy in order to deduce some strong convergence (compare Remark~\ref{rem:form}). 

Up to the best knowledge of the author, this is the first time that these different solution concepts are considered for such a thermodynamical consistent systems.

\textbf{The paper is organized as follows:} In the remaining part of this section it is shown formally that the considered model fits into the GENERIC modeling concept. In Section~\ref{sec:sec2}, notation, assumptions, and main results are collected. Additionally, some useful lemmata are provided. Section~\ref{sec:ex}, executes the existence proof for measure-valued solutions, which also lies at the core of the existence proof for the dissipative solution concept. 
Finally, in Section~\ref{sec:rel}, the relative energy inequality is proven and thus, the weak-strong uniqueness for measure-valued solutions and the existence of dissipative solutions.

\subsection{Modeling\label{sec:mod}}

In this subsection, we comment on the modeling of the considered system~\eqref{eq}. The calculations presented in this section are purely formal.
The system~\eqref{eq} may be modeled via Fremond's approach see \cite{fremond} and \cite{giulio}. 
It may also be derived by following the GENERIC modeling approach. 
GENERIC stands for General Equation for Non-Equilibrium Reversible-Irreversible Coupling  and was proposed by M. Grmela and H.C Öttinger see~\cite{generic}
It states that the evolution of a thermodynamical consistent system may be expressed on a state space $\mathcal{N}$ via 
\begin{align}
\t \f q = \mathcal{J}(\f q) D\mathcal{E}(\f q) + \mathcal{K}(\f q)D\mathcal{S}(\f q)\,,\label{generic}
\end{align}
where $\mathcal{E}: \mathcal{N}\ra \R $ and $\mathcal{S}: \mathcal{N} \ra \R $ are the energy and entropy of the system, respectively, and $\mathcal{J}
$ an anti-symmetric Poisson structure $(\mathcal{J}(\f q) = - \mathcal{J}(\f q)^*)$ fulfilling the Jacobi-identity and $\mathcal{K}
$ the symmetric dissipative structure $ (\mathcal{K}(\f q) = \mathcal{K}(\f q)^*)$, which is positive semi-definite, \textit{i.e.}, $ \langle \f \xi , \mathcal{K}(\f q) \f \xi \rangle \geq 0$
on a underlying manifold with the non-interaction conditions 
\begin{align}
\mathcal{J}(\f q ) D\mathcal{S}(\f q) = 0 = \mathcal{K}(\f q ) D \mathcal{E}(\f q )\,.\label{noninter}
\end{align}
%
The free energy density $\psi$ of the system is given by
\begin{align}
\psi(\theta,\varphi ) = \frac{\varepsilon}{2}| \nabla \varphi |^2 + \frac{1}{\varepsilon} F( \varphi ) + f_\delta( \theta ) - \theta \varphi  
\quad\text{such that}\quad
 \Psi ( \theta , \varphi ) = \int_\Omega \psi(\theta, \varphi ) \de \f x \,, 
\end{align}
where $f_\delta $ represents the purely caloric heat part of the free energy and is given by
\begin{align}
f_\delta ( \theta) : = \begin{cases}
- \theta ( \log \theta - 1) & \delta = 0\\
- \frac{1}{\delta(\delta+1)} \theta^{\delta+1} & \delta>0\,
\end{cases}\,.\label{fdel}
\end{align}
For convenience, we define $ Q_\delta(\theta) = f_\delta(\theta)- \theta f_\delta'(\theta)$. The specific heat is given by $ c_V(\theta) = Q'_\delta  (\theta) $. 
We may derive the entropy density by
\begin{align}
s(\theta , \varphi ) = - \frac{\delta \Psi}{\delta \theta} = - f'_\delta (\theta) + \varphi\label{Na:s} \quad \text{such that} \quad \mathcal{S}(\f q) =  \int_\Omega s ( \f x) \de \f x \,.
\end{align}
Setting $\mathcal{N}:= H^1(\R)\times H^1(\R) \times \f V$ (see the subsequent section for the definition of $\f V$), the  energy may be expressed as the sum of internal and kinetic energy
\begin{align*}
\mathcal{E}(\f q  ) =\int_\Omega Q_\delta(\theta ) + \frac{1}{2}| \nabla \varphi |^2 + F(\varphi)+ \frac{1}{2}| \f u |^2  \de \f x = \Psi (\theta , \varphi) + \int_\Omega \theta s  \de \f x + 
\frac{1}{2}\int_\Omega  | \f u |^2  \de \f x 
\,,
\end{align*}
where $\f q = (s , \varphi , \f u)$ and $\theta $ has to be interpreted as $ \theta (s, \varphi) = (f_\delta ' )^{-1}(\varphi - s )$. 
The form $\mathcal{J}$ 
may be expressed via
\begin{align*}
 \langle \eta , \mathcal{J} ( \f q ) \zeta \rangle = \int_\Omega \eta \cdot \begin{pmatrix}
 0&0&-\nabla s \\ 0&0& - \nabla \varphi \\ \nabla s & \nabla \varphi & (\f u \cdot \nabla ) - \di (\f u \cdot) 
\end{pmatrix}\zeta \de \f x \,.
\end{align*}
The skew-symmetry of $\mathcal{J}$ follows from the fact that $\f u$ is divergence free and an integration-by-parts.  
The symmetric form $ \mathcal{K}$ has to be chosen to guarantee 
$$ \mathcal{K}(\f q) \begin{pmatrix}
1\\0\\0
\end{pmatrix} = \begin{pmatrix}
\kappa (\theta ) | \nabla \log \theta |^2 + \frac{\nu(\theta)| (\nabla \f u)_{\sym}|^2}{\theta} + \frac{| \nabla \mu|^2}{\theta} + \di ( \kappa(\theta) \nabla \ln \theta)  \\
\Delta \mu 
\\
\di ( \nu(\theta ) ( \nabla \f u )_{\sym} ) ) 
\end{pmatrix} 
\,$$
and the additional conditions on the dissipative form, \textit{i.e.}, symmetry and non-interaction condition~\eqref{noninter}. 
This implies the following form of $\mathcal{K}$, \textit{i.e.},
\begin{align*}
\left \langle \eta , \mathcal{K}(\f q ) \zeta \right \rangle ={}
\int_\Omega&  \eta_1 \zeta_1 \left (\kappa (\theta ) | \nabla \log \theta |^2 + \frac{\nu(\theta)| (\nabla \f u)_{\sym}|^2}{\theta} + \frac{| \nabla \mu|^2}{\theta} \right ) 
\\&  
+ \kappa(\theta)\left ( \nabla \eta_1 \cdot \nabla \zeta_1- \nabla \eta_1 \cdot\nabla  \log \theta \zeta_1 - \eta_1 \nabla \log \theta \cdot \nabla \zeta _1\right )
\\
& - \nabla \eta_1 \cdot \nabla \mu \zeta_2 - \eta_1 \nabla \mu\cdot \nabla \zeta_2 + \nabla \eta_1 \mu \cdot \nabla  \zeta _2
\\& - \nu(\theta)\left ( \nabla \eta_1 \cdot  ( \nabla \f u)_{\sym} \cdot \f \zeta_3 + \eta_1  ( \nabla \f u)_{\sym} : (\nabla \f \zeta_3)_{\sym} - \nabla \eta_1 \otimes  \f u  :( \nabla  \f \zeta _3)_{\sym}\right )
\\& - \nabla \eta_2 \cdot \nabla \mu \zeta_1 - \eta_2 \nabla \mu\cdot \nabla \zeta_1 + \nabla \eta_2 \mu \cdot \nabla  \zeta _1
\\
& - \nabla \eta_2 \cdot \nabla \theta  \zeta_2 + \eta_2 \nabla \theta \cdot \nabla \zeta_2 + \nabla \eta_2 \theta  \cdot \nabla  \zeta_2 
\\& -\nu(\theta)\left (( \nabla\f  \eta_3)_{\sym} :  ( \nabla \f u)_{\sym}   \zeta_1 +   ( \nabla \f u)_{\sym} : \f \eta_3 \otimes \nabla  \zeta_1  - (\nabla \f \eta_3)_{\sym}  :  (\f u  \otimes \nabla   \zeta _1)\right )
\\& -\nu(\theta)  \left ( ( \nabla\f  \eta_3)_{\sym} :  ( \nabla \theta \otimes  \f \zeta_3)-\left   ( \nabla \theta \otimes  \f \eta_3+\theta  (\nabla \f \eta_3)_{\sym} \right ) : ( \nabla  \f \zeta_3)_{\sym} 
\right ) 
\de \f x \,,
\end{align*}
%
%
where the occurrences of $\theta$ again have to be expressed via $s$ and $\varphi$ using~\eqref{Na:s}. 

From $D\mathcal{S} = (1,0,0)^T$ and $D\mathcal{E} = (\theta , \mu , \f u^T )^T$ we observe the system\eqref{eq} by the standard GENERIC approach~\eqref{generic}. 
\section{Preliminaries and main results\label{sec:sec2}}
In this section, the assumptions and notations are given, as well as the main results. Additionally some preliminary lemmata are provided, which may be interesting in their own right. 
\subsection{Assumptions and notation\label{sec:not}}
We introduce some notation. Let $\Omega \subset \R^d$ be a bounded sufficiently smooth domain and $d \geq2$. As usual, $ \R_+ :=\{ r \in \R , r \geq 0\}$. 
We denote by $\pmb{\mathcal{V}}:=\{ \f v \in \mathcal{C}_{c}^\infty(\Omega;\R^d)| \di \f v =0\}$ 
the space of smooth solenoi\-dal functions with compact support. By ${\f H} $, ${\f V}$, and $\f W^{k,p}_{0,\sigma}(\Omega)$
we denote the closure of $\pmb{\mathcal{V}}$ with respect to the norm of $\f L^2(\Omega) $,   $ \f H^1( \Omega) $, and $\f W^{k,p}(\Omega)$,
respectively (for $k \in \N$, $p \in (1,\infty)$).
Note that ${\f H}$ can be characterized by ${\f H} = \{ \f v \in {\f L}^2 | \di \f v =0 \text{ in }\Omega\, , \f n \cdot \f v = 0 \text{ on } \partial \Omega \} $, where the first condition has to be understood in the distributional sense and the second condition in the sense of the trace in ${H}^{-1/2}(\partial \Omega )$. 
The dual space of a Banach space ${\mathbb X}$ is always denoted by ${\mathbb X}^*$ and is equipped with the standard norm; the duality pairing is denoted by $\langle\cdot, \cdot \rangle$.
For $Q\subset \R^d$, the Radon measures are denoted by $\mathcal{M}(Q)$. We recall that the Radon measures equipped with the total variation are a Banach space and  for compact sets $Q$ , it can be characterized by~$\mathcal{M}(Q) = ( \C(Q))^*$ (see~\cite[Theorem~4.10.1]{edwards}). 
The integration of a function $f\in \C(Q)$ with respect to a measure $\mu\in \mathcal{M}(Q)$ is denoted by $ \int_Qf(\f h ) \mu(\de \f h)\,.$ In case of the Lebesgue measure we just write 
$ \int_Qf(\f h ) \de \f h\,.$

By $ \R^{d\times d }_{\sym,+}$, we denote symmetric positive semi-definite matrices. For a matrix $\f A\in \R^{d \times d} $, we denote the negative symmetric part by $  ( \f A )_{\sym,-}$, which is given by 
$  ( \f A )_{\sym,-} := \inf_ {\{ \f a \in \R^d | \f a | =1 \}, }  \f a \cdot \f A \f a $.
Associated to $f_\delta$ defined in~\eqref{fdel}, we define the thermal energy by $ Q_\delta(\theta) = f_\delta(\theta)- \theta f_\delta'(\theta)$. The specific heat is given by $ c_V(\theta) = Q' (\theta) $. 
 To abbreviate, we define $\Lambda_\delta (\theta |\tet) = f_\delta(\theta)-f_\delta(\tet) - f'_\delta(\theta) (\theta - \tet) $ .  
Note that $f_\delta $ is concave such that $\Lambda_\delta $ is positive.
By $c$, we denote a generic constant, which may changes from line to line. 

We will need a number of assumptions on $F$, $\kappa$, and $f_\delta$, namely
\begin{hypothesis}
\label{hypo}
(A)~~We let $ F \in \C^2(\R, \R) \cap \C^{2,1}_{\text{loc}}( \R,\R)$. 

\smallskip
\noindent%
(B)~~We assume $F$ to be {\it $\lambda$-convex}, \textit{i.e.}, convex up to a quadratic 
perturbation. Namely there exists a constant $\lambda>0$ such that 
$ F''(y) \geq - \lambda$ for all $ y \in \R$.
We can then define a convex modification of $F$, subsequently named $G$, as 
\begin{align}
  G(y ) = F( y) + \lambda y^2 \, \quad y \in \R\label{lambdacon}.
\end{align}
By construction, $G$ is ``strongly convex'', \textit{i.e.}, $G''(y)\ge \lambda >0$
for all $ y \in \R$. Moreover, it is not restrictive to assume $G$ to 
be nonnegative and so normalized that $G'(0)=0$. 

\smallskip
\noindent%
(C)~~Next, we assume a  coercivity assumption at $\infty$,
namely 
\begin{equation}\label{Fcoerc}
  \liminf_{|y|\ra \infty} \frac{F(y)}{|y|}   = +\infty.
\end{equation} 
As a consequence of \eqref{Fcoerc}, we can first observe that 
$F(y) \geq -c $ for  some constant $c>0$ and every $y\in \R$.
Moreover, it is easy to verify that
the physical energy controls the $H^1$-norm of $\fhi$ {from above}. Namely, 
there exist $\gamma > 0$ and $c\ge 0$ such that
\begin{equation}\label{Ecoerc}
  \frac12 \| \nabla \fhi \|_{L^2(\Omega)}^2
   + \int_\Omega F(\fhi) \, \de \f x \ge \gamma \| \fhi \|_{H^1(\Omega)} - c,
\end{equation} 
for every $\fhi \in H^1(\Omega)$ such that $F(\fhi) \in L^1(\Omega)$.

\smallskip
\noindent%
(D)~~Finally, a growth condition is assumed to hold, \textit{i.e.}, 
there exists a constant $c>0$ such that 
\begin{equation}\label{growthF}
  | F' (y) |\log (e + | F'(y) |) \leq c( 1 + |F(y)| ) 
   \quad\text{for all }y \in \R.
\end{equation} 
Possibly modifying the value of $c$ one can see that the
analogue of \eqref{growthF} holds also for the convex 
modification~$G$, \textit{i.e.}, we have
\begin{equation}\label{growthG}
  | G' (y) |\log (e + | G'(y) |) \leq c( 1 + G(y) ) 
   \quad\text{for all }y \in \R.
\end{equation} 
To check that \eqref{growthF} implies \eqref{growthG},
a number of straightforward but somehow technical
computations would be required. We leave them to the reader 
because no real difficulty is involved. 

For simplicity, we assume for $k_0 >0 $ and $k_1 \geq 0$ that
\begin{align}
\kappa (\theta) = \kappa_0 + \kappa_1 \theta^\beta \quad \text{with } \beta \in [0,2] \quad \text{and}\quad 
f_\delta ( \theta) : = \begin{cases}
- \theta ( \log \theta - 1) & \delta = 0\\
- \frac{1}{\delta(\delta+1)} \theta^{\delta+1} & \delta>0\,
\end{cases}
\text{ for } \delta \in \left [0,1\right   )
\,.
\end{align}
For $\delta = \beta /2$, we may also allow $\kappa_0 = 0$. 
Additionally, we assume $ 0 < \underline{\nu } \leq \nu(s) \leq c $ for all $s \in \R_+$. 
Moreover, 
we define $\hat{\kappa}(r)= \int_1^r \kappa (s)/ s \de s $. 

\end{hypothesis}

We define the set $\mathbb{X}$ via: $ (\f u, \theta, \varphi
 ) \in \mathbb{X}$ if 
\begin{align*}
\f u & \in L^\infty(0,T; L^2(\Omega) ) \cap L^2(0,T; \f V) \,,\\
Q_{\delta }(\theta)  & \in L^\infty( 0,T; L^1(\Omega)) \quad \text{with } \theta(x ,t) > 0 \text{ a.e. in }\Omega \times (0,T) \text{ and } \theta \log \theta \in L^1(\Omega \times (0,T))  \,,\\
f'_\delta (\theta)  & \in L^\infty(0,T;L^1(\Omega)) \quad \text{with } \hat{\kappa}(\theta) \in  L^2(0,T; H^1(\Omega))\quad \text{and }\t f'_\delta (\theta) \in \mathcal{M}([0,T];(W^{1,p}(\Omega))^*) \,,\\
\varphi &\in L^\infty (0,T; H^1(\Omega)) \quad \text{with }\t \varphi  \in L^1(0,T;(W^{1,p}(\Omega))^*) 
\,.
\end{align*}

We define the set $\mathbb{Y}$ of  regular solutions via: $(\tu,\tet,\tp
)\in \mathbb{Y}$ if $(\tu,\tet,\tp
)\in \mathbb{X}$ and additionally
\begin{align*}
\tu \in{}& L^1(0,T;\C^1(\ov\Omega)) \cap \mathcal{C}^1([0,T];\f V)\,,\\
\tet \in {}& L^1(0,T;W^{2,\infty}(\Omega)) \cap L^2(0,T;W^{1,\infty}(\Omega))\cap L^\infty(\Omega \times (0,T)) \\
&\text{ with } \tet \geq \underline{\tet}>0 \text{ a.e. in }\Omega \times (0,T) \quad \text{and } \f n \cdot \kappa(\theta ) \nabla \theta = 0\text{ on }\partial \Omega \times (0,T)  \,,\\
\tp \in {}& W^{1,1}(0,T;L^\infty(\Omega)) \cap L^1(0,T;W^{3,p }(\Omega))\quad \text{for } p>d\quad \text{with }\f n \cdot \nabla \tp = 0 \text{ on }\partial \Omega \times (0,T) 
\,.
\end{align*}

\subsection{Main results}
In the following, we collect the main results of the paper. 
\begin{definition}\label{def:weak}
An element  $ (\f u, \theta, \varphi
) \in \mathbb{X}$, a chemical potential $\mu  \in L^2(0,T;W^{1,1}(\Omega)) 
$,  and a defect measure $m \in L^\infty(0,T;\mathcal{M}(\overline\Omega; \R^{d\times d } _{\text{sym},+} ))$ is called a measure-valued solution to~\eqref{eq}, if 
\begin{align*}
\frac{|(\nabla \f u)_{\sym}| }{\sqrt{\theta}}\,,    \frac{|\nabla \mu |}{\sqrt{\theta}} \in L^2(0,T;L^2(\Omega)) \,,
\end{align*}
   the incompressibility condition $ \di \f u =0$ is fulfilled a.e.~in $\Omega \times (0,T)$, the weak momentum balance 
 \begin{multline}
-  \int_\Omega \f u \cdot \f \xi \de \f x \Big |_0^t + \int_0^t \int_\Omega \f u \t \f \xi + ( \f u \otimes \f u ) : \nabla \f \xi 
\de \f x \de t\\ = \int_0^t \int_\Omega \nu (\theta ) ( \nabla \f u )_{\sym} : (\nabla \f \xi )_{\sym} -\varepsilon  (\nabla \varphi \otimes \nabla \varphi ) : (\nabla \f \xi )_{\sym} \de \f x - \int_{\overline\Omega}( \nabla \f \xi)_{\sym} : m(\de \f x) \de t \label{weakmomentum}
 \end{multline}
 for all $\f \xi \in \C_c^\infty( \Omega \times  [0,T)) $ with $ \di \f \xi = 0$ in $\Omega \times (0,T)$ and for a.e.~$t\in 0,T)$, the entropy inequality 
\begin{multline}
  \int_{\Omega} \vartheta  (f'_\delta  (  \theta) - \varphi) \de \f x  \Big|_0^t + \int_0^t \int_{\Omega} \vartheta  \left ( \kappa (\theta)  | \nabla  \log \theta|^2+ \frac{\nu(\theta) | (\nabla \f u_{\sym}|  ^2}{\theta }+ \frac{|\nabla  \mu |^2 }{\theta}  \right ) \de \f x  \de s\\
  \leq \int_0^t \int_\Omega\left (  \kappa (\theta)    \nabla \log \theta \cdot \nabla \vartheta + ( \vartheta _t + (\f u \cdot \nabla ) \vartheta) ( f'_\delta ( \theta) - \varphi )\right ) \de \f x  \de s \label{entropy}
\end{multline}
for all $ \vartheta \in L^\infty(0,T; L^\infty( \Omega)) \cap W^{1,1}(0,T; L^\infty(\Omega))\cap L^2 (0,T; W^{1,d}(\Omega))$ and for a.e.~$t\in(0,T)$, and the weak formulation of the Cahn--Hilliard part
\begin{align}
\int_0^t\left \langle \t \varphi , \phi \right \rangle -  \int_\Omega \left ( \varphi  \f u \cdot \nabla \right )  \phi - \nabla \mu  \cdot \nabla \phi  \de \f x \de s  =  0 \label{phaseeq}
\end{align}
with
\begin{align}
 \mu 
 + \varepsilon
 \Delta 
  \varphi 
  - \frac1\varepsilon F'(\varphi )
  + \theta 
   =0\,, \quad \text{a.e.~in }\Omega \times (0,T) \label{mueq} 
\end{align}
 for all $ \phi \in  L^2(0,T; W^{1,d}(\Omega)) \cap L^\infty(0,T; W^{1,\infty} (\Omega)) $ and with $ \f n \cdot \nabla \phi = 0 $ a.e.~on $\partial \Omega \times (0,T)$. 
 Note that the trace $\f n \cdot \nabla \varphi$ is well defined in $L^1(\partial \Omega)$ (see~\cite[Prop.~3.80]{demengel}).
Additionally, the energy inequality holds:
\begin{multline}
\frac{1}{2}\int_{\Omega } \left (|\f u (t)|^2 +\varepsilon | \nabla \varphi(t)|^2 + \frac{1}{\varepsilon} F( \varphi(t)) + Q_\delta (\theta(t)) 
 \right )\de \f x +\frac{1}{2}\langle m , 1 \rangle   \\   \leq \frac{1}{2}\int_{\Omega } \left (| \f u_0|^2+ \varepsilon	| \nabla \varphi_0|^2 + \frac{1}{\varepsilon}  F( \varphi_0)  + Q_\delta (\theta(t))
 \right ) \de \f x  \,\label{energyin}
\end{multline}
and a weak form of the entropy balance, \textit{i.e.} there exists a measure $ \eta \in \mathcal{M}( \ov \Omega \times [0,T])$ such that 
\begin{subequations}
\label{weakentro}
\begin{multline}
\left \langle \t ( f'_\delta(\theta)  - \varphi ) , \chi \right \rangle _{\mathcal{M}([0,T]; (\f W^{1,p}(\Omega))^* ), \C ([0,T];\f W^{1,p}(\Omega)) }  + \left \langle \eta , \chi\right \rangle_{\mathcal{M}(\ov \Omega \times [ 0,T] ), \C (\ov \Omega \times [ 0,T]) } \\ = \int_0^T \int_\Omega\left (  \kappa(\theta ) \nabla \theta + \f u f_\delta '(\theta)\right ) \cdot \nabla \chi \de \f x \de t \,
\end{multline}
for all $\chi \in \C ([0,T];\f W^{1,p}(\Omega)$ for $p>d$, where the measure $\eta $ may be bounded from below by
\begin{align}
 \left \langle \eta , \chi\right \rangle_{\mathcal{M}(\ov \Omega \times [ 0,T] ), \C (\ov \Omega \times [ 0,T]) }  \geq \int_0^T\int_{\Omega} \chi  \left ( \kappa (\theta)  | \nabla  \log \theta|^2+ \frac{\nu(\theta) | \nabla \f u|  ^2}{\theta }+ \frac{|\nabla  \mu |^2 }{\theta}  \right ) \de \f x  \de t
\end{align}
for all $\chi \in \C (\ov \Omega \times [0,T])$ with $\chi \geq 0$ in $\Omega \times (0,T)$. 
\end{subequations}

\end{definition}
\begin{remark}
As a defect measure we understand a measure, which has Lebesgue-part zero, \textit{i.e.}, is concentrated on a set of Lebesgue measure zero. Sometimes such a measure is also referred to as concentration measure, since it captures concentrations of the approximating sequence. This has to be understood in contrast to the usual Young measure, or oscillation measure that captures oscillations of an approximate sequence. 
We can exclude concentrations here, due to point-wise  a.e.~convergence of the approximate sequences (compare to~\cite{meas}). 
\end{remark}

\begin{theorem}
\label{thm:existence}
Let $ \Omega\subset \R^d $ be sufficiency smooth 
 and Hypothesis~\ref{hypo} be fulfilled with $2\delta \leq  \beta$. To every $Q_\delta( \theta_0 ) \in L^1(\Omega) $ with $ f'_\delta ( \theta_0)\in L^1(\Omega)$ and $ \theta _0 (\f x ) >0 $ a.e.~in $\Omega$ as well as $ \varphi_0 \in H^1(\Omega)$ with $F(\varphi_0)\in L^1(\Omega)$ and $\f u_0\in \f H$, there exists at least one measure-valued solution according to Definition~\ref{def:weak}.

If the requirement
\begin{align}
\delta > \frac{d}{2}\left ( \frac{d}{2}-\beta\right ) \label{addreg}
\end{align}
is fulfilled, the defect measure $m$ vanishes, \textit{i.e.}, $ m=0$. 
\end{theorem}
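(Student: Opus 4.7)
The plan is a Faedo--Galerkin approximation exploiting the full thermodynamic structure, followed by passage to the limit with a defect measure absorbing the concentration in the convective term.

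First I would approximate the system by truncating $\f u$ in a divergence-free Galerkin basis of $\f V$, mollifying $F'$ and the initial data, and inserting a small additional regularization in the chemical potential equation that guarantees strict positivity of $\theta_n$ at each level. Local existence follows from a Banach or Brouwer fixed-point argument and global extension from the uniform energy bound. The essential structural requirement is that the discrete system admit simultaneously the energy identity (by testing \eqref{eq1} with $\f u_n$, \eqref{eq2} with $\mu_n$ and $\t \varphi_n$, and integrating \eqref{eq3}) and the entropy inequality (formally by testing \eqref{eq3} with $-1/\theta_n$ and subtracting a $\varphi$-contribution of \eqref{eq2}). This yields uniform bounds $\f u_n \in L^\infty(\f H) \cap L^2(\f V)$, $\varphi_n \in L^\infty(H^1)$ (via \eqref{Ecoerc}), and $\theta_n \in L^\infty(L^{\delta+1})$ through $Q_\delta(\theta) = \theta^{\delta+1}/(\delta+1)$, together with the entropy-production bounds $\hat\kappa(\theta_n)\in L^2(H^1)$ and $\nu(\theta_n)|(\nabla \f u_n)_{\sym}|^2/\theta_n, |\nabla \mu_n|^2/\theta_n \in L^1$. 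The bound on $\t f'_\delta(\theta_n)$ will only be in the space of Radon measures with values in $(W^{1,p})^*$, a price paid for the $L^1$ source in the heat equation.

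Aubin--Lions/Simon compactness, combined with elliptic regularity applied to $\varepsilon \Delta \varphi_n = F'(\varphi_n)/\varepsilon - \mu_n - \theta_n$, then yields strong convergence of $\varphi_n$ in $L^2(H^1)$, a.e.~convergence of $\theta_n$ via a Helly-type selection theorem compatible with the Radon-measure time regularity, and weak convergence of $\f u_n$ in $L^2(\f V)$. The capillary stress $\nabla \varphi_n \otimes \nabla \varphi_n$ passes strongly, while $\f u_n \otimes \f u_n$ converges only weakly and its limit exceeds $\f u \otimes \f u$ by a positive semi-definite symmetric matrix-valued measure $m$, which is identified as the defect in \eqref{weakmomentum}. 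The coefficients $\nu(\theta_n), \kappa(\theta_n)$ pass by a.e.~convergence of $\theta_n$ (together with the growth condition $\beta \le 2$ to obtain uniform integrability), and the energy inequality \eqref{energyin}, the entropy inequality \eqref{entropy} and the weak entropy balance \eqref{weakentro} are recovered by weak lower semi-continuity of the dissipative quadratic forms; the measure $\eta$ arises precisely because the entropy production sits only in $L^1$ and is weakly lower semi-continuous but not weakly continuous.

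For the vanishing of $m$ under \eqref{addreg}, the goal is strong $L^2(\Omega\times(0,T))$ convergence of $\f u_n$, which requires upgrading the integrability of $\f u_n \otimes \f u_n$ so that $\t \f u_n$ is bounded in a negative-index Sobolev space and Aubin--Lions applies to $\f u_n$ directly. Interpolating $\theta_n \in L^\infty(L^{\delta+1})$ against $\theta_n^{\beta/2} \in L^2(H^1) \hookrightarrow L^2(L^{2d/(d-2)})$ produces a spacetime $L^r$ bound on $\theta_n$, and \eqref{addreg} is precisely the threshold beyond which H\"older applied to $\nu(\theta_n)|(\nabla \f u_n)_{\sym}|^2/\theta_n \in L^1$ lifts $(\nabla \f u_n)_{\sym}$ above the borderline $L^2(L^2)$ and thereby closes the compactness argument. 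The principal obstacle will be the simultaneous preservation of the energy and the entropy structures at the approximation level (which forces a carefully staged regularization) combined with extracting a.e.~convergence of $\theta_n$ from time regularity as weak as a Radon measure.
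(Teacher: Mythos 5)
Your overall skeleton (Galerkin discretization plus a regularization of the heat equation enforcing positivity of $\theta_n$, energy and entropy estimates, compactness, a concentration defect measure in the momentum balance) matches the paper's strategy, but two of your key quantitative claims fail, and they are exactly where the difficulty of this theorem sits. First, the energy identity does \emph{not} yield $\f u_n\in L^2(0,T;\f V)$: in this thermodynamically consistent system the viscous dissipation $\nu(\theta)|\nabla \f u|^2+|\nabla\mu|^2$ reappears as a source in the internal energy balance \eqref{eq3}, so summing the tested equations conserves the \emph{total} energy and produces no dissipation term at all — only the bounds \eqref{aprienergy}. The sole gradient control on $\f u_n$ is the temperature-weighted entropy production $\nu(\theta_n)|(\nabla\f u_n)_{\sym}|^2/\theta_n\in L^1$ from \eqref{aprientro}, which together with $\theta_n\ln\theta_n\in L^1$ gives only the Orlicz-type bound of Lemma~\ref{lem:lnest}, i.e.\ $\nabla\f u_n$ barely better than $L^1$. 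Second, the capillary stress does \emph{not} pass strongly: without \eqref{addreg}, $\nabla\varphi_n$ is bounded only in $L^\infty(0,T;L^2)$ and converges strongly only in $L^1(0,T;L^q)$ for $q<2$, so $\nabla\varphi_n\otimes\nabla\varphi_n$ can concentrate, and the defect measure must absorb the combined tensor $\f u_n\otimes\f u_n+\varepsilon\nabla\varphi_n\otimes\nabla\varphi_n$ as in \eqref{convmeasure}. Your elliptic-regularity route to $L^2(0,T;H^1)$-compactness of $\varphi_n$ does not close, because the right-hand side $\frac1\varepsilon F'(\varphi_n)-\mu_n-\theta_n$ is controlled only in $L^1$-type spaces ($\mu_n\in L^{d/(d-1)}$ by \eqref{Nummer}, $\theta_n\in L^\infty(L^1)$ for small $\delta$), which for $d\ge 3$ does not lift $\nabla\varphi_n$ above $L^2$.

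The mechanism for $m=0$ under \eqref{addreg} is also not the one you describe. Interpolating $\theta^\delta\in L^\infty(L^1)$ against $\theta^{\beta/2}\in L^2(H^1)$ gives $\theta\in L^q(\Omega\times(0,T))$ only for some $q>d/2$, and then $\nabla\f u=(|\nabla\f u|/\sqrt\theta)\sqrt\theta$ lands in $L^p$ with $p>2d/(d+2)$, which is strictly below $2$ for $d>2$; there is no way to reach the "borderline $L^2(L^2)$" you invoke, nor is strong $L^2$-convergence of $\f u_n$ the point. What kills the defect measure is \emph{uniform integrability}: $p>2d/(d+2)$ gives $\f u\in L^r$ with $r>2$ by Sobolev embedding, hence $\{\f u_n\otimes\f u_n\}$ is relatively weakly compact in $L^1$, and a weakly $L^1$-compact sequence carries no concentration measure. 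The same must be done — and this is the step you omit entirely — for $\{\nabla\varphi_n\otimes\nabla\varphi_n\}$: the paper tests \eqref{mueq} with $|G'(\varphi)|^{s-2}G'(\varphi)$ to bootstrap $\Delta\varphi\in L^s$ for some $s>2$ using the improved integrability of $\theta$ and $\mu$, and only then obtains $\nabla\varphi\in L^p$ with $p>2$. Without this second half, the defect measure in \eqref{weakmomentum} does not vanish even under \eqref{addreg}.
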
 
  
 \begin{theorem}\label{thm:main}
 Let Hypothesis~\ref{hypo} hold true. Let $(\f u,  \theta , \varphi  )\in \mathbb{X}$ be a weak solution according to~\ref{def:weak} and $( \tu, \tet ,\tp )\in \mathbb{Y}$ a strong solution  emanating from the same initial data. 
Let $\beta\in [4\delta,2-2\delta ]$.

Then both solution coincide,\textit{ i.e.}, $ \f u = \tu$, $ \theta \equiv \tet $, and $\varphi \equiv \tp $. 

For $\delta > 0 $, the weak-strong uniqueness result also holds for $\beta \leq 4\delta$ under the additional assumption that the solution $\theta$ is bounded pointwise from below, \textit{i.e.}, $ \theta \geq  \underline{\theta}>0$ a.e.~in $\Omega \times (0,T)$. 
 \end{theorem}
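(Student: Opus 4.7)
The plan is to apply the relative energy method. Define $\Rc(t)$ as the spatial integral of
$\tfrac12|\f u - \tu|^2 + \tfrac{\varepsilon}{2}|\nabla(\varphi-\tp)|^2 + \tfrac{1}{\varepsilon}\bigl(G(\varphi)-G(\tp)-G'(\tp)(\varphi-\tp)\bigr) - \tfrac{\lambda}{\varepsilon}|\varphi-\tp|^2 + \Lambda_\delta(\theta|\tet)$,
using the convex modification $G = F + \lambda(\cdot)^2$ from Hypothesis~\ref{hypo}(B) and the relative thermal term $\Lambda_\delta$ from Section~\ref{sec:not}. By construction $\Rc(0)=0$; every contribution except the $-\tfrac{\lambda}{\varepsilon}|\varphi-\tp|^2$ correction is nonnegative, and that last term is absorbed in the final Gronwall step.

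Writing $\Rc$ schematically as (total energy of the weak solution) minus (linearization around the strong one), I would derive an upper bound for each piece separately. The nonlinear part is controlled from above using the energy inequality~\eqref{energyin} and an analogous identity satisfied by $(\tu,\tet,\tp)\in\mathbb{Y}$, which holds as an equality thanks to its regularity. The linear expansion generates natural test functions: $\tu$ in the weak momentum balance~\eqref{weakmomentum}; the strong chemical potential $\tilde\mu = -\varepsilon\Delta\tp + \tfrac1\varepsilon F'(\tp) - \tet$ paired against~\eqref{phaseeq}--\eqref{mueq}; and an expression involving $f'_\delta(\tet)-\tp$ in the entropy inequality~\eqref{entropy}. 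Each of these test functions is admissible by the $\mathbb{Y}$-regularity of $(\tu,\tet,\tp)$.

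After assembling these identities, the surviving contributions split into three categories. First, the dissipation cross-terms $\nu(\theta)(\nabla\f u)_\sym:(\nabla\tu)_\sym$, $\kappa(\theta)\nabla\log\theta\cdot\nabla f'_\delta(\tet)$, and $\nabla\mu\cdot\nabla\tilde\mu$ combine with the dissipation on the left via Young's inequality to produce genuine relative-dissipation quantities. Second, the defect contributions $\int_{\ov\Omega}(\nabla\tu)_\sym:m$ from the momentum balance and $\langle\eta,\chi\rangle$ from~\eqref{weakentro} enter with a favorable sign once $\chi\geq 0$ is chosen and one exploits $m\in\R^{d\times d}_{\sym,+}$, so that both can be absorbed or discarded. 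Third, lower-order remainders are bounded by $K(s)\Rc(s)$ with $K\in L^1(0,T)$ depending only on $\|(\tu,\tet,\tp)\|_{\mathbb{Y}}$; the $\lambda$-convexity of $F$ is used here to control the potential cross-term jointly with the $G$-modification correction.

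The hardest step will be closing the temperature estimate. The cross-term arising from $(\kappa(\theta)-\kappa(\tet))\nabla\tet\cdot\nabla(f'_\delta(\theta)-f'_\delta(\tet))$ must be bounded using only the $L^2(0,T;H^1(\Omega))$-regularity of $\hat\kappa(\theta)$ and the current value of $\Rc$. The algebraic constraint $\beta\in[4\delta,2-2\delta]$ is precisely what matches the growth of $\kappa(\theta)=\kappa_0+\kappa_1\theta^\beta$ against the convexity modulus of $\Lambda_\delta$, which behaves like $\theta^{\delta-1}$, so that Young's inequality produces quantities absorbable by the relative dissipation on both the high- and low-temperature sides. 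For $\delta>0$, a pointwise lower bound $\theta\geq\underline\theta>0$ replaces the constraint $\beta\geq 4\delta$ and yields the second statement. A final application of Gronwall's lemma to $\Rc(t)\leq \int_0^t K(s)\Rc(s)\de s$ gives $\Rc\equiv 0$, forcing $\f u\equiv\tu$, $\varphi\equiv\tp$ in $H^1$, and $\theta\equiv\tet$ a.e., completing the proof.
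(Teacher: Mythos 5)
Your overall strategy---a relative energy built from the convex modification $G$, the energy inequality for the weak solution against the energy equality for the strong one, testing the momentum balance with $\tu$ and the entropy inequality with $\tet$, pairing the Cahn--Hilliard relations against $\tilde\mu$ and $\t\tp$, exploiting the sign of the defect measure, matching $\beta\in[4\delta,2-2\delta]$ against the convexity of $\Lambda_\delta$, and closing with Gronwall---is exactly the route the paper takes via Proposition~\ref{prop:rel}. There is, however, a genuine gap in your treatment of the non-convexity of $F$. You subtract $\tfrac{\lambda}{\varepsilon}|\varphi-\tp|^2$ from the $G$-based relative energy, so the phase-field part of your $\Rc$ reduces to $\tfrac1\varepsilon\bigl(F(\varphi)-F(\tp)-F'(\tp)(\varphi-\tp)\bigr)$, and you assert that this negative correction ``is absorbed in the final Gronwall step.'' It cannot be absorbed there: the term sits pointwise at time $t$ on the left-hand side, not under a time integral, so Gronwall never sees it; and the only way to dominate it by the coercive part of $\Rc$ is the Poincar\'e bound $\lambda\|\varphi-\tp\|_{L^2}^2\le \lambda C_P\|\nabla(\varphi-\tp)\|_{L^2}^2$, which would have to be beaten by the coefficient $\tfrac{\varepsilon}{2}$ of the gradient term---impossible for small $\varepsilon$ or large $\lambda$. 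Your Gronwall step then only yields $\Rc(t)\le 0$, which does not force $\varphi\equiv\tp$, because your $\Rc$ is not bounded below by a positive multiple of the squared differences.

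The paper's remedy is to \emph{add} the weak norm $\tfrac{M}{2}\|\varphi-\tp\|_{(W^{1,\infty}(\Omega))^*}^2$ to the relative energy in \eqref{relen} and to use the interpolation inequality
\begin{equation*}
\lambda\|\varphi-\tp\|_{L^2(\Omega)}^2\;\le\;\tfrac14\|\nabla\varphi-\nabla\tp\|_{L^2(\Omega)}^2+\tfrac{M}{2}\|\varphi-\tp\|_{(W^{1,\infty}(\Omega))^*}^2,
\end{equation*}
which is valid only because mass conservation gives $\int_\Omega(\varphi-\tp)\,\de\f x=0$. The gradient part is absorbed by half of the Dirichlet contribution to $\mathcal{R}$, and the weak norm joins the Gronwall quantity---which in turn obliges one to estimate $\t\|\varphi-\tp\|_{(W^{1,\infty}(\Omega))^*}^2$ through the equation \eqref{phaseeq} (the ``Nonconvex contribution'' subsection of Section~\ref{sec:rel}); this is where the factor $M$ and several entries of $\mathcal{K}$ in \eqref{relenenergy} originate. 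A related, smaller omission: the remaining cross-term $\int\bigl(\t\tp+(\tu\cdot\nabla)\tp\bigr)\bigl(F'(\varphi)-F'(\tp)-F''(\tp)(\varphi-\tp)\bigr)$ is controlled by the relative energy only via the growth condition \eqref{growthF}/\eqref{growthG} (estimate \eqref{st82}), not by $\lambda$-convexity alone. With these two devices supplied, the rest of your outline matches the paper's proof.
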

Theorem~\ref{thm:main} is a direct consequence of Proposition~\ref{prop:rel}. 

 \begin{remark}
 By the presented technique, the weak-strong uniqueness result for a solution with $\kappa_0>0$, \textit{i.e.}, with the standard part of Fourier's law, only holds, if $\delta=0$. In a sense, the resulting dissipative logarithmic terms in the entropy balance~\eqref{entropy} (the terms multiplied by $\kappa(\theta)$), can only be estimated by associated logarithmic terms in the entropic part ($f'_\delta$ for $\delta=0$ in~\eqref{entropy}). 
 
In the case that the solution $\theta$ is bounded pointwise from below, \textit{i.e.,} $ \theta \geq  \underline{\theta}$ a.e.~in $\Omega \times (0,T)$, this restriction does not occur since in this case, there exists a $C>0$ such that
\begin{align*}
| f'_{\delta_1} ( \theta ) - f'_{\delta _1} ( \tet) | \leq C | f'_{\delta_2} ( \theta ) - f'_{\delta _2} ( \tet) |  \quad \text{for }\delta_1 \leq \delta _2 \quad\text{ and for } \theta,\tet \geq \min\{ \underline{\theta}, \underline{\tet}\}
\,.
\end{align*}
But it it seems to be out of reach to show that such a lower bound holds for the considered cases.

 \end{remark}
 
 The proof of Theorem~\ref{thm:main} relies on the fact that a solution according to Definition~\ref{def:weak} fulfills a so-called relative energy inequality. 
 In the following, we restrict ourselves to the case $\delta=0$, since this is the important case and the relative energy inequality holds without an additionally assumed lower bound as in Theorem~\ref{thm:main}. 
%
%
%
%
In the case of a convex energy functional, this idea goes back to {Dafermos}~\cite{dafermos} in the context of thermodynamical systems.
For a strongly convex {G\^{a}teaux} differentiable  energy functional $\mathcal{E}:\mathbb{X}  \ra \R$  the relative entropy of two solutions~$u$ and $\tilde{u}$ is given by (see \cite[Sec.~5.3]{dafermos2})
\begin{align}
\mathcal{R}(u|\tilde{u})= \mathcal{E} (u) - \mathcal{E}(\tilde{u}) - \langle\mathcal{E}'(\tilde{u}), u -\tilde{u}\rangle  > 0 \text { for } u \neq \tilde{u} \,.\label{relencon}
\end{align}
The  strong convexity of $\mathcal{E}$ guarantees that $\mathcal{R}$ is positive as long as $ u$ and $\tilde{u}$ do not coincide.
Let us consider the nonlinearity $F$. A function fulfilling Hypothesis~\ref{hypo} is called $\lambda$-convex, \textit{i.e.}, the function is convex up to an additive shift by the identity. 
According to~\eqref{lambdacon}, the  convex modification of $F$ is called $G$. 
%
The relative energy for system~\eqref{eq} is defined via
\begin{subequations}
\label{relen}
\begin{align}
\mathcal{R}(\f q  | \tilde {\f q}): ={}& \int_{\Omega} \left (  \frac{\varepsilon}{2}| \nabla \varphi - \nabla \tp |^2 +\frac{1}{\varepsilon}\left ( G ( \varphi ) - G ( \tp ) - G'(\tp)( \varphi - \tp )   \right ) \right ) \de \f x\label{relen1} \\
& +\frac{1}{2}\| \f u - \tu \|_{L^2(\Omega)}^2+ \int_\Omega \Lambda_{\delta} \left ( \theta | \tet\right )
   \de \f x \label{relen2}
+ \frac{M}{2} \| \varphi - \tp\|_{(W^{1,\infty}(\Omega))^*}^2 
\,,
\end{align}
\end{subequations}
where we defined $ \f q =( \f u, \theta , \varphi )$ and $ \tilde {\f q} = (\tu,  \tet,\tp)$. 
Due to the convexity of $G$, we may conclude by choosing~$\eta = G $, $ u = \varphi$, and $\tilde{u}= \tp$ in~\eqref{relencon} that the line~\eqref{relen1} is nonnegative.
%
Due to definition~\eqref{lambdacon},   we find for the convex function $G$ that 
\begin{align*}
G ( \varphi ) - G ( \tp ) - G'(\tp)( \varphi - \tp )  = {}& F ( \varphi ) - F ( \tp ) - F'(\tp)( \varphi - \tp )  \\
&+ {\lambda}( | \varphi|^2 - | \tp |^2 - 2\tp ( \varphi - \tp )) \\
={}& F ( \varphi ) + F ( \tp ) - (F'(\tp)( \varphi - \tp )  +2 F(\tp)) + {\lambda} | \varphi - \tp |^2 \,.
\end{align*}
To handle the last term, which is due to the non-convexity of $F$, we add another rather weak norm to the relative energy. 
For $M$ big enough, we find by an interpolation inequality
\begin{align*}
 \lambda \| \varphi - \tp\|_{L^2(\Omega)} ^2 \leq \frac{1}{4} \| \nabla \varphi - \nabla \tp\|_{L^2(\Omega)}^2 + \frac{M}{2} \| \varphi - \tp\|_{(W^{1,\infty}(\Omega))^*}^2 \,. 
\end{align*}
Note that this only holds since $ \int_\Omega \varphi - \tp \de \f x = 0$ and the $H^1$-semi norm is equivalent to the full $H^1$-norm for $\varphi - \tp$. 
Indeed, considering the Poisson equation $ - \Delta \Phi = \varphi - \tp $ in $\Omega$ with $ \f n \cdot \nabla \Phi =0 $ on $ \partial \Omega$ and $\int_\Omega \Phi \de \f x = 0$, we find
\begin{align*}
\| \varphi - \tp \| _{L^2(\Omega)}^2 ={}& \left ( \nabla \varphi - \nabla \tp , \nabla \Phi \right ) \leq \| \nabla \varphi - \nabla \tp \|_{L^2(\Omega)} \| \nabla \Phi \|_{L^2(\Omega)} 
\\ \leq{}& c\| \nabla \varphi - \nabla \tp \|_{L^2(\Omega)} \|  \Phi \|_{H^2(\Omega)}^{d/(2+d)} \| \nabla \Phi \|_{L^1(\Omega)} ^{2/(2+d)} 
\\ \leq{}& \frac{1}{2} \| \varphi -\tp \|_{L^2(\Omega)}^2 + \frac{1}{2} \| \nabla \varphi -\nabla \tp \|_{L^2(\Omega)}^2 + c \| \nabla \Phi \|_{L^1(\Omega)}^2\,.
\end{align*}
The first term can be absorbed on the left-hand side and by Hahn--Banach's theorem the last term can be identified  via
$$\| \nabla \Phi \|_{L^1(\Omega)}=\sup_{\f a \in L^\infty, \| \f a \|_{L^\infty(\Omega)}=1} \left \langle \nabla \Phi , \f a \right \rangle \,.$$
 
Additionally, we define the relative dissipation by 
\begin{align}
\begin{split}
\mathcal{W} (\f q  |\tilde {\f q} ) : ={}&   \int_0^t { \kappa_0} \int_{\Omega}  \tet | \nabla \log \theta - \nabla \log \tet |^2 \de \f x 
+ \int_\Omega   \nu (\theta)\frac{1}{2} \left |\sqrt\frac{\tet }{{\theta}} (\nabla\f u)_{\sym} 
    - \sqrt\frac{\theta }{{\tet}}( \nabla \tu)_{\sym} \right |^2
      \de \f x  \de s
\\+&\int_0^t \int_\Omega \frac{\kappa_1}{\beta^2 }  \tet | \nabla {\theta}^{\beta/2}  - \nabla {\tet}^{\beta/2} |^2 +    \left | \sqrt\frac{\tet }{\theta } \nabla \mu - \sqrt\frac{\theta}{{\tet}}\nabla \tilde{\mu}\right |^2  \de \f x  \de s 
\,.
\end{split}\label{W}
\end{align} 
 the regularity measure by 
 \begin{align*}
 \mathcal{K} (\tilde {\f q} )  := {}&c \left ( \| \t \tp+ ( \tu \cdot \nabla ) \tp  \|_{L^\infty(\Omega)}  + \left \|\varepsilon \Delta \tp -\frac{1}{\varepsilon} F'(\tp) \right \|_{W^{1,d}(\Omega)} + \left \| \left (\t + (\tu \cdot \nabla )\right ) \ln \tet \right \|_{L^\infty(\Omega)} \right ) \\& +c\left ( \left \|
  \nabla \tilde{\mu }
  \right \|_{L^\infty(\Omega)}^2 + \left \| 
  (\nabla \tu)_{\sym} 
  \right  \|_{L^\infty(\Omega)} ^2+ \kappa_0\left  \| 
 {\Delta \tet}
 \right \| _{L^\infty(\Omega)} \right )\\& + c\kappa_1 \left ( \| \nabla \tet^{\beta/2} \|_{L^\infty(\Omega)}^2 + \| \Delta \tet^{\beta/2} \|_{L^\infty(\Omega)}\right ) + c \| \nabla \tilde{\mu }\|_{L^2(\Omega)}\,
 \end{align*}
 and the solution operator by
 \begin{align}
 \left \langle \mathcal{A}(\tilde{\f q}  )  , \bullet \right \rangle :={}\left \langle \begin{pmatrix}
 \t\tu+(\tu\cdot\nabla)\tu - \di ( \nu(\tet)\nabla \tu ) + \varepsilon \di ( \nabla \tp\otimes \nabla \tp) 
 \\ 
 (\t  +( \tu \cdot\nabla )) (- \ln \tet - \tp ) +  \frac{1}{\tet}\di ( \kappa (\tet)\nabla  \tet) 
 + \nu(\tet) \frac{| (\nabla \tu )_{\sym}|^2 } {\tet}+ \frac{ | \nabla \tilde{\mu}|^2 }{\tet}
 \\
 \t \tp + ( \tu \cdot \nabla) \tp - \Delta \tilde{\mu} 
 \end{pmatrix}
,
\bullet \right \rangle\,,\label{operatorA}
 \end{align}
 where the chemical potential $\tilde{\mu}$ is given by $ \tilde{\mu} =-\varepsilon \Delta \tp +\frac{1}{\varepsilon} F'(\tp) - \tet $ with $ \f n \cdot \nabla \varphi = 0$.

With this notation at hand, the relative energy inequality is given by  
\begin{multline}
\mathcal{R}(\f q  |\tilde{\f q} ) (t) + \int_0^t \mathcal{W}(\f q  |\tilde{\f q} )  e^{\int_s^t  \mathcal{K} (\tilde{\f q}) \de \tau } \de s 
\leq 
\mathcal{R}(\f q  |\tilde{\f q} ) (0) e^{\int_0^t  \mathcal{K} (\tilde{\f q}) \de s }
\\+ \int_0^t \left (  \left \langle \mathcal{A}(\tilde{\f q})  , \begin{pmatrix}
\tu - \f u \\\tet-\theta
 \\ \tilde{\mu}-\mu 
 \end{pmatrix} \right \rangle  +  M \| \varphi - \tp \|_{( W^{1,\infty}(\Omega))^*} \| \mathcal{A}_3(\tilde{\f q})    \|_{( W^{1,\infty}(\Omega))^*}  \right )  e^{\int_s^t  \mathcal{K} (\tilde{\f q}) \de \tau } \de s \,,\label{relenenergy}
\end{multline}
where $\mathcal{A}_3$ denotes  the third line of~\eqref{operatorA}, \textit{i.e.}, the left-hand side of equation~\eqref{eq2}$_1$ for the test function $\tilde{\f q}\in \mathbb{Y}$.   
The idea for the definition of  dissipative solutions is to replace the weak formulation of the momentum balance~\eqref{weakmomentum}, the energy inequality~\eqref{energyin}, and entropy inequality~\eqref{entropy} by the above relative energy balance for all reasonable test functions. 
 
\begin{definition}[dissipative solution]\label{def:diss} Let $\delta =0$. 
A triple $ (\f u, \theta, \varphi
 ) \in \mathbb{X}$ and a chemical potential $\mu  \in L^2(0,T;W^{1,1}(\Omega))
  $ is called a dissipative solution to~\eqref{eq}, if 
\begin{align*}
\frac{|(\nabla \f u)_{\sym}| }{\sqrt{\theta}}\,,  &  \frac{|\nabla \mu |}{\sqrt{\theta}} \in L^2(0,T;L^2(\Omega)) \,,
\end{align*}
   the incompressibility condition $ \di \f u =0$ is fulfilled a.e.~in $\Omega \times (0,T)$, as well as  
   the relations~\eqref{phaseeq},~\eqref{mueq}, and~\eqref{weakentro}. 
   Additionally, the relative energy inequality~\eqref{relenenergy} is fulfilled for a.e.~$t\in (0,T)$ and for all
  $(\tu,\tet,\tp
  )\in \mathbb{Y}$,
where the chemical potential $\tilde{\mu}$ is given by $ \tilde{\mu} =-\varepsilon \Delta \tp +\frac{1}{\varepsilon} F'(\tp) - \tet $ such that $ \f n \cdot \nabla \tilde{\mu} = 0$ on $\Omega \times (0,T)$. 
\end{definition} 
%
%

 \begin{theorem}[Existence of  dissipative solutions]\label{thm:diss}
 Let $ \Omega \subset \R^d$ be sufficiency smooth%
, $\delta =0$,  and Hypothesis~\ref{hypo} be fulfilled. To every $\theta_0 \in L^1(\Omega) $ with $ \ln  \theta_0  \in L^1(\Omega)$ and $ \theta _0 (\f x ) >0 $ a.e.~in $\Omega$ as well as $ \varphi_0 \in H^1(\Omega)$ with $F(\varphi_0)\in L^1(\Omega)$ and $\f u_0\in \f H$, there exists at least one dissipative solution according to Definition~\ref{def:diss}.

Especially, every weak solution according to Definition~\ref{def:weak} is a dissipative solution according to~\ref{def:diss} by the natural identification (without the defect measure $m$).

 \end{theorem}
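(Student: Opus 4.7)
The plan is to deduce the theorem from the existence result for measure-valued solutions (Theorem~\ref{thm:existence}) by showing that, when $\delta=0$, every measure-valued solution already satisfies the relative energy inequality~\eqref{relenenergy}. Since the weak Cahn--Hilliard formulation~\eqref{phaseeq}--\eqref{mueq}, the weak entropy balance~\eqref{weakentro}, the incompressibility and the integrability conditions on $(\nabla\f u)_{\sym}/\sqrt\theta$ and $\nabla\mu/\sqrt\theta$ already appear in Definition~\ref{def:weak}, the only task is to establish~\eqref{relenenergy} itself. This is exactly the content of Proposition~\ref{prop:rel} announced in Section~\ref{sec:rel}, and the main existence statement together with the second sentence of the theorem will then follow.

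To derive~\eqref{relenenergy} I would differentiate each term of $\mathcal{R}(\f q\,|\,\tilde{\f q})$ in time and feed the measure-valued relations into the resulting expression. Concretely: for the kinetic contribution $\frac12\|\f u-\tu\|_{L^2}^2$, use the energy inequality~\eqref{energyin} for $\frac12\|\f u\|_{L^2}^2$, the strong momentum equation encoded in $\mathcal{A}_1(\tilde{\f q})$ for $\frac12\|\tu\|_{L^2}^2$, and the weak momentum balance~\eqref{weakmomentum} tested with $\tu$ for the cross term $(\f u,\tu)_{L^2}$. For the thermal part $\int_\Omega \Lambda_0(\theta|\tet)\,\de\f x=\int_\Omega\bigl(Q_0(\theta)-Q_0(\tet)-f_0''(\tet)\tet(\theta-\tet)\bigr)\,\de\f x$ use the energy inequality~\eqref{energyin} for $\int Q_0(\theta)$, combined with the entropy inequality~\eqref{entropy} tested with $\vartheta=-\ln\tet$ (which lies in the admissible test space by the regularity $\tu,\tet\in\mathbb{Y}$ and the pointwise lower bound $\tet\ge\underline\tet$). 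For the phase-field part, test~\eqref{phaseeq} with $\tmu-\mu$ where $\tmu=-\varepsilon\Delta\tp+\varepsilon^{-1}F'(\tp)-\tet$, and use \eqref{mueq} together with the strong Cahn--Hilliard residual $\mathcal{A}_3(\tilde{\f q})$.

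The treatment of the non-convex potential $F$ is handled exactly as indicated after~\eqref{relen}: rewriting $F(\varphi)-F(\tp)-F'(\tp)(\varphi-\tp)=G(\varphi)-G(\tp)-G'(\tp)(\varphi-\tp)-\lambda|\varphi-\tp|^2$ produces a nonnegative convex contribution plus the negative term $-\lambda|\varphi-\tp|^2$; the latter is absorbed by a fraction of $\|\nabla\varphi-\nabla\tp\|_{L^2}^2$ together with $M\|\varphi-\tp\|_{(W^{1,\infty})^*}^2$ via the interpolation estimate already reproduced in the excerpt, provided $M$ is chosen sufficiently large. The extra $(W^{1,\infty})^*$-norm itself evolves according to \eqref{phaseeq} tested with the Riesz-type potential associated with $\varphi-\tp$, which produces precisely the term $M\|\varphi-\tp\|_{(W^{1,\infty})^*}\|\mathcal{A}_3(\tilde{\f q})\|_{(W^{1,\infty})^*}$ appearing on the right-hand side of~\eqref{relenenergy}. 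The defect measure $m$ enters only through the test $\int_{\ov\Omega}(\nabla\tu)_{\sym}:m(\de\f x)$ coming from~\eqref{weakmomentum}, which, since $m$ takes values in $\R^{d\times d}_{\sym,+}$, is controlled by $\|(\nabla\tu)_{\sym}\|_{L^\infty}\langle m,1\rangle$ and reabsorbed through the Gronwall argument into the prefactor $\exp(\int\mathcal{K}(\tilde{\f q}))$.

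The main obstacle, and the heart of Proposition~\ref{prop:rel}, is the completion-of-squares that produces the relative dissipation~\eqref{W}. After collecting all the dissipative contributions obtained from~\eqref{energyin}, \eqref{entropy} with $\vartheta=-\ln\tet$, and the Cahn--Hilliard test, one is left with quadratic expressions in $(\nabla\f u)_{\sym},(\nabla\tu)_{\sym},\nabla\mu,\tmu,\nabla\log\theta,\nabla\log\tet$ that must be rearranged into $|\sqrt{\tet/\theta}(\nabla\f u)_{\sym}-\sqrt{\theta/\tet}(\nabla\tu)_{\sym}|^2$, $|\sqrt{\tet/\theta}\nabla\mu-\sqrt{\theta/\tet}\nabla\tmu|^2$, $\tet|\nabla\log\theta-\nabla\log\tet|^2$ and the analogous $\beta$-weighted term. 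This rearrangement is precisely where the hypothesis $\beta\in[0,2]$ enters through the splitting $\kappa(\theta)=\kappa_0+\kappa_1\theta^\beta$ and where all remaining cross terms involving derivatives of $\tilde{\f q}$ must be bounded pointwise by $\mathcal{K}(\tilde{\f q})\,\mathcal{R}(\f q\,|\,\tilde{\f q})$, which requires the full regularity $\tilde{\f q}\in\mathbb{Y}$. Once this algebraic manipulation is carried through, Gronwall's lemma produces the exponential factor and the final inequality~\eqref{relenenergy}; combining with Theorem~\ref{thm:existence} then yields the desired existence of dissipative solutions and the embedding of measure-valued into dissipative solutions.
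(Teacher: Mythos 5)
Your overall strategy is exactly the paper's: existence is inherited from Theorem~\ref{thm:existence}, and the only thing to prove is that a measure-valued solution satisfies~\eqref{relenenergy} for \emph{every} test function $(\tu,\tet,\tp)\in\mathbb{Y}$, which is obtained by rerunning the derivation of Proposition~\ref{prop:rel} while adding and subtracting the equations for $\tilde{\f q}$ so that the residuals assemble into the operator $\mathcal{A}(\tilde{\f q})$ of~\eqref{operatorA}; the defect measure contributes $\tfrac12\langle m,I\rangle\ge 0$ on the left and is dropped, and the completion of squares plus Gronwall closes the argument. You correctly identify that Proposition~\ref{prop:rel} alone (which assumes $\tilde{\f q}$ is a regular solution) is not enough and that the $\mathcal{A}_1$, $\mathcal{A}_3$ residuals must be carried along.

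One concrete step in your sketch is wrong, however: the treatment of the thermal part. The paper tests the entropy inequality~\eqref{entropy} with $\vartheta=\tet$ (the Dafermos ``ballistic'' choice), not with $\vartheta=-\ln\tet$. The reason is visible from the regrouping~\eqref{calcrelen}: since $\Lambda_\delta(\theta|\tet)=Q_\delta(\theta)-f_\delta(\tet)+\tet f'_\delta(\theta)$, the cross term that must be generated is $\int_\Omega \tet\,(f'_\delta(\theta)-\varphi)\,\de\f x$, which is precisely the entropy functional tested against $\vartheta=\tet$; testing against $-\ln\tet$ would instead produce $\int_\Omega \ln\tet\,\ln\theta\,\de\f x$, which does not occur in $\mathcal{R}$ and cannot be recombined into $\Lambda_0(\theta|\tet)=\theta-\tet-\tet(\ln\theta-\ln\tet)$. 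Relatedly, your identity $\Lambda_0(\theta|\tet)=Q_0(\theta)-Q_0(\tet)-f_0''(\tet)\tet(\theta-\tet)$ is false (with $Q_0(\theta)=\theta$ and $f_0''(\tet)=-1/\tet$ it evaluates to $2(\theta-\tet)$ and loses the logarithmic term entirely). With $\vartheta=\tet$ in~\eqref{entropy} and the complementary test of the regular energy balance by $\theta$ (equation~\eqref{entrotestzwei}), the algebra closes as in the paper; the rest of your outline then goes through unchanged.
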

 
\begin{remark}[Dissipative solutions and regularity]\label{rem:disreg}
We want to argue that dissipative solutions are a reasonable solution concept. First, they comply with the minimal assumptions on a reasonable solution concept due to Lions~\cite[Sec.~4.4]{lionsfluid},\textit{ e.g.},
these solutions exists (as the previous theorem asserts) and they fulfill the so-called \textit{weak-strong uniqueness} criterion. This means that, in case that there exists a weak solution $\tilde{\f q}$ fulfilling the additional regularity properties, \textit{i.e.}, $\tilde{\f q}\in \mathcal{Y}$, then every dissipative solution emanating from the same initial datum coincides with this solution. 
This property follows directly from the relative energy inequality~\eqref{relenenergy}. Indeed, inserting the weak solution $\tilde{\f q}
 =( \tu,\tet,\tp)$ into~\eqref{relenenergy}, the right-hand side vanishes, if the initial values match. This implies that also the left-hand side has to be zero, \textit{i.e.}, every dissipative solution coincides with $\tilde{\f q}$. 

On the other hand, it also holds that if there exists a regular dissipative solution, then this solution is a regular weak solution, \textit{i.e.}, a strong solution. 
Indeed, assume that the dissipative solution $\bar{\f q}$ is regular, \textit{i.e.}, $\bar{\f q} = ( \bar{\f u}, \bar{\theta}, \bar{\varphi}) \in \mathcal{Y}$, then also $ \tilde{\f q} = \bar{\f q} + \alpha \f r \in \mathcal{Y}$ for $\f r= (\f r_{\f u}, \f r _{\theta}, \f r_{\varphi})^T\in \C^\infty_c(\ov\Omega \times [0,T] ; \R^5)$ with $\alpha > 0 $ sufficiently small. 
First, we observe that due to the additional regularity, the relations~\eqref{phaseeq} and~\eqref{mueq} hold a.e.~pointwise. This implies that the third component of~$\mathcal{A}(\bar{\f q})$ vanishes. 
Inserting $ \tilde{\f q} = \bar{\f q} + \alpha \f r $  into~\eqref{relenenergy} for the dissipative solution $\f q = \bar{\f q}$ and dividing by $\alpha$, we end up with
\begin{align*}
o(\alpha) \leq \int_0^t    \left \langle \mathcal{A}(\bar{\f q})  , \begin{pmatrix}
\f r 
 \end{pmatrix} \right \rangle e^{\int_s^t  \mathcal{K} (\tu,  \tet,\tp) \de \tau+ o(\alpha) } \de s  + o(\alpha) \,,
\end{align*}
where $o (\alpha ) \ra 0 $ for $\alpha \ra 0$,
since the only linear term in $\alpha$ occurs in the last term on the right-hand side of~\eqref{relenenergy} and all other appearing terms are super-linear in $\alpha$. 
Passing to the limit $\alpha\ra 0$ implies that 
\begin{align*}
0 \leq  \int_0^t \left \langle \mathcal{A}(\bar{\f q}) , 
\f r\right \rangle \de s ={}& \int_0^t \left (  \t\bar{\f u}+(\bar{\f u}\cdot\nabla)\bar{\f u} - \di ( \nu(\bar{\theta})\nabla \bar{\f u} ) + \varepsilon \di ( \nabla \bar{\varphi}\otimes \nabla \bar{\varphi}) , \f r _{\f u} \right ) \de s \\ &
+ \int_0^t 
\left ( (\t  +( \bar{\f u} \cdot\nabla )) (- \ln  \bar{\theta} - \bar{\varphi} ) +  \di ( \kappa (\bar{\theta})\nabla \ln \bar{\theta}) 
, \f r_{\theta}\right  ) \de s \\ &
+ \int_0^t 
\left (  \kappa(\bar{\theta}) | \nabla \ln \bar{\theta}|^2 + \nu(\bar{\theta}) \frac{| (\nabla \bar{\f u} )_{\sym}|^2 }{ \bar{\theta}}+ \frac{ | \nabla \bar{\mu}|^2 }{\bar{\theta}}
, \f r_{\theta}\right  ) \de s 
 \end{align*}
the above inequality is in fact an equality  (since $\f r$ was arbitrary) and hence, $\bar{\f q}$  fulfills a standard weak formulation.  To find the above equality, we inserted the definition of $\mathcal{A}$ and used, that the last entry vanishes. Note that $\bar\mu$ is defined in the usual way, \textit{i.e.}, $\bar \mu = - \varepsilon \Delta \bar\varphi + F'( \bar \varphi ) - \bar \theta $ with $\f n \cdot \nabla \bar\mu = 0$ on $\partial\Omega \times (0,T)$. 
\end{remark}
\begin{remark}[formulation of dissipative solutions]\label{rem:form}
During the proof of the previous remark, it became obvious that the property of \textit{regularity implies uniqueness} as well as \textit{weak-strong uniqueness}  holds without the relations~\eqref{entropy} and~\eqref{energyin}.
That is, why we do not incorporate them into the Definition~\ref{def:diss}. This would be possible and they are also weakly-sequential stable with respect to the underlying topology.
Both formulations,~\eqref{entropy} and~\eqref{energyin}, are excluded from Definition~\ref{def:diss}, since they do not seem to contribute much information. 
The energy inequality~\eqref{energyin} follows from~\eqref{relenenergy} by choosing~$(\tu, \tet, \tp)= (0,0,0)$. 
The entropy inequality~\eqref{entropy} for constant test functions follows by choosing~$(\tu, \tet, \tp)= (0,\bar\theta,0)$ in ~\eqref{relenenergy}  and formally passing to the limit~$\bar\theta \ra \infty$. 
The weak-strong uniqueness result and the regularity implies uniqueness result from Remark~\ref{rem:disreg} even holds without the relation~\eqref{weakentro}, but without this relation, we lack any control on the time derivative of the temperature or rather the entropy. This would deprive us from the possibility to establish strong convergence of approximate temperatures, which in turn 
 would lead to Young measure-valued temperatures in~\eqref{relenenergy}. Therefore, we kept~\eqref{weakentro} in Definition~\ref{def:diss}.
In order to establish strong convergence of the temperature. We remark that it would be enough, to include some estimate of the time derivative into the formulation, like
\begin{align*}
\| \t \ln \theta \| _{\mathcal{M}([0,T]; (W^{1,p}(\Omega ))^*)}  \leq c \,
\end{align*}
in order to establish strong convergence of the temperatures and such a formulation does not rely on any measure-valued relations. 
\end{remark}

An underlying idea of dissipative solution is that no equation is fulfilled anymore. This may seems odd, that a solution to a partial differential equation is given as an inequality. 
%
%
 But firstly, inequalities serve as a reasonable solution concept in the context of Gradient flows~\cite{mielke}, in the form of De Giorgis upper dissipation distance. In this regard, dissipative solutions may also be interpreted as a generalization of the concept of minimizing movements~\cite{giorgi} applicable to more general  non-gradient systems (\textit{e.g.}, GENERICs~\cite{generic}). 

Secondly, away from a certain regularity regime, the equations may not be considered as a  good model. There has been extensive work on the non-uniqueness of weak solutions using convex-integration techniques (see~\cite{isett} or~\cite{buckmaster}). Additionally, the equations are derived from energetic principles often under the assumption of certain smoothness. 
Then the question arises, why should a generalized solution concept even fulfill the equations in a distributional sense, even though they may not lead to  a solution complying to the overall energetic principles. 
%
%

The concept of dissipative solutions follows another approach and compares the dissipative solution to smooth solutions, which fulfill the equations only approximately, but inherit enough regularity to deduce uniqueness, \textit{i.e.}, are elements in a regularity class for which the equations make sense. 
Even though dissipative solution comply with the underlying energy dissipation relations, they are still far from being unique. Therefore additional selection criteria are needed in order to choose a good solution within this class of dissipative solutions. The concept of maximal dissipation, \textit{i.e.}, selecting the solution dissipating the most energy
was proposed (see~\cite{dafermos} or~\cite{breit}) as a selection criterion to identify the physically relevant solution.
This implies that the dissipative solution with minimal energy should be selected. 
Therefore, one may considers the minimization problem (compare to~\cite{maxdiss})
\begin{align*}
\min _{\{\f q \in \mathcal{X}  \}}  \int_0^T \mathcal{E}(\f q(t)) \de t \quad \text{s.t.}\quad \f q \text{ is a dissipative solution according to Definition~\ref{def:diss}}\,.
\end{align*}
Including the inequality conditions via a Lagrangian multiplier and taking the supremum over all side conditions, we end up with the definition of maximal dissipative solutions  (compare to~\cite{maxdiss}). 
Due to the fact that the energy $\mathcal{E}$ is nonvonvex, this selection criterion still grants no unique solution. This stems from the fact that the cost functional $\mathcal{E}$ is not convex, but also the set of dissipative solutions is not convex.

In contrast, it is known that the associated energy in the limit of $\varepsilon \ra 0$ is convex. This energy is the Hausdorff measure of the resulting sharp interface $\Gamma $, $\mathcal{H}^{d-1} ( \Gamma) = \| \nabla \chi \|_{\mathcal{M}(\Omega)}$, where $\chi $denote the indicator function associated to one fluid species~\cite{abels2}. Since this is a convex energy, there may be some hope that in the limit $\varepsilon\ra 0$, maximal dissipation even provides a unique solution. Such a cost functional will favor shorter (and therewith smoother) interfaces, which is desirable in applications~\cite{checkerboard}.

 \subsection{Preliminaries}
We collect different lemmata that are helpful in the remainder of the article.
The following result was already used in~\cite{larosc}. 
\begin{lemma}\label{lem:lnest}
Let $\{a_n\}$, $\{ b_n \} \subset L^1(\Omega\times (0,T) )$ and $b_n > 0 $ a.e.~in $\Omega\times (0,T)$ for all $n \in \N$. Assume that there exists a constant $c$ such that
\begin{align}
\int_0^T\int_\Omega \frac{a_n^2}{b_n} \de \f x \de t  \leq c \quad\text{and}\quad \int_0^T \int_\Omega b _n\ln b_n \de \f x \de t  \leq c \,.\label{orliszest}
\end{align}
Then there exists a constant such that 
\begin{align*}
\int_0^T\int_\Omega  | a_n|  \ln ^{1/2} (1+|a_n|) \de \f x \de t \leq c \,. 
\end{align*}

\end{lemma}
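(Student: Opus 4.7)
The plan is to reduce the claim to an $L^1$-bound on the mixed term $b_n \ln(1+|a_n|)$ via Cauchy--Schwarz, and then to dispose of that term by a Young-type inequality associated with the conjugate Orlicz pair $\Phi(x) = (1+x)\ln(1+x) - x$, $\Psi(y) = e^y - y - 1$.

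First, I factor the integrand as
\[
|a_n| \ln^{1/2}(1+|a_n|) \;=\; \frac{|a_n|}{\sqrt{b_n}} \cdot \sqrt{b_n \ln(1+|a_n|)} ,
\]
which is legitimate since $b_n > 0$ a.e. Cauchy--Schwarz applied on $\Omega\times(0,T)$ yields
\[
\int_0^T\!\!\int_\Omega |a_n| \ln^{1/2}(1+|a_n|)\,\de\f x\,\de t \;\le\; \Bigl(\int_0^T\!\!\int_\Omega \tfrac{a_n^2}{b_n}\,\de\f x\,\de t\Bigr)^{1/2} \Bigl(\int_0^T\!\!\int_\Omega b_n \ln(1+|a_n|)\,\de\f x\,\de t\Bigr)^{1/2},
\]
and the first factor is bounded by hypothesis. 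The main task is therefore to bound the second factor.

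For this, observe that $\Phi$ and $\Psi$ above form a conjugate Young pair (one checks $(\Phi')^{-1}(y) = e^y - 1 = \Psi'(y)$). The Young inequality $st \le \Phi(s) + \Psi(t)$ applied to $s = b_n$, $t = \ln(1+|a_n|)$ gives pointwise
\[
b_n \ln(1+|a_n|) \;\le\; (1+b_n)\ln(1+b_n) - b_n + |a_n| - \ln(1+|a_n|) \;\le\; (1+b_n)\ln(1+b_n) + |a_n|,
\]
where the last two (nonpositive) terms were simply dropped. Integrating, the problem is reduced to showing that $(1+b_n)\ln(1+b_n)$ and $|a_n|$ are bounded in $L^1(\Omega\times(0,T))$ uniformly in $n$.

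The first of these follows from an elementary pointwise inequality: on $\{b_n \le 1\}$ the quantity $(1+b_n)\ln(1+b_n)$ is bounded by $2\ln 2$, whereas on $\{b_n \ge 1\}$ a direct computation yields $(1+b_n)\ln(1+b_n) \le C(1 + b_n \ln b_n)$, so that \eqref{orliszest}$_2$ gives the desired bound (recall that $b_n \ln b_n \ge -1/e$, so the assumption controls the positive part, not just the signed integral). For $\int |a_n|$ I apply Cauchy--Schwarz once more:
\[
\int_0^T\!\!\int_\Omega |a_n|\,\de\f x\,\de t \;\le\; \Bigl(\int_0^T\!\!\int_\Omega \tfrac{a_n^2}{b_n}\,\de\f x\,\de t\Bigr)^{1/2} \Bigl(\int_0^T\!\!\int_\Omega b_n\,\de\f x\,\de t\Bigr)^{1/2},
\]
and $\int b_n \le C$ follows by splitting according to $b_n \le 1$ vs $b_n > 1$, using the finite measure of $\Omega\times(0,T)$ on the first set and $b_n \le b_n\ln b_n$ on the second. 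Combining the three estimates yields the assertion. The only slightly delicate point is identifying the right conjugate Orlicz pair; after that, every step is elementary.
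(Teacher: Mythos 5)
Your argument is correct and reaches the stated bound, but it follows a genuinely different route from the paper. The paper works directly with the quantity $|a_n|\log^{1/2}(1+|a_n|)$: it introduces the auxiliary convex function $\psi(r)=\tfrac14\,(r^2(2\log r-1)+1)$ (set to $+\infty$ for $r<1$), estimates its Legendre conjugate by $\psi^*(s)\le s^2/\log(1+s)+s^2/\log^2(1+s)$, and then, putting $u=\sqrt{b}+1$ and $v=a+2u$, runs a chain of pointwise manipulations of $v\log^{1/2}v$ that ends in a Fenchel--Young step producing exactly the two controlled quantities $v^2/u^2$ and $u^2\log(u^2)$. You instead peel off the weight first, writing $|a_n|\log^{1/2}(1+|a_n|)=(|a_n|/\sqrt{b_n})\,(b_n\log(1+|a_n|))^{1/2}$ and applying Cauchy--Schwarz, which reduces everything to the single mixed term $\int b_n\log(1+|a_n|)$; that term is then dispatched by the classical $L\log L$ / exponential Orlicz duality $st\le (1+s)\log(1+s)-s+e^t-t-1$, whose two surviving output terms, $(1+b_n)\log(1+b_n)$ and $|a_n|$, are controlled by the hypotheses. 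Your version is shorter and avoids the paper's somewhat delicate conjugate computation and the $\log^{1/2}$ bookkeeping; the price is that it uses the finiteness of $|\Omega\times(0,T)|$ (to get $\int b_n\le C$ and to pass from $\int b_n\log b_n$ to $\int(1+b_n)\log(1+b_n)$), which is harmless here since $\Omega$ is bounded and $T$ finite. One small slip: on the set $\{b_n>1\}$ the inequality $b_n\le b_n\log b_n$ that you invoke to bound $\int b_n$ is false for $1<b_n<e$; split at $b_n=e$ instead (absorbing the range $b_n\le e$ into the finite-measure term), after which $\int b_n\le C$ and the whole argument goes through.
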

\begin{proof}

In order to infer a bound for $\{a _n\}$, we 
consider a new convex function $\psi:\R_+\to[1,+\infty]$ defined as 
$\psi(r)=(1/4) (r^2(2\log r -1)+1)$, where it is intended that $\psi(1)=0$ and $\psi(r)\equiv +\infty$ as $r<1$. 
Determining the precise expression of the conjugate function $\psi^*(s)$ is difficult,
but we can at least estimate it appropriately. We recall
that 
\begin{equation*}
  \psi^*(s) = \max_{r\in \R_+} \big( sr - \psi (r) \big)
\end{equation*}
and a simple computation shows that the maximum is attained at $s =r\log r$.
Hence, if $r$ is the maximizer, using first that $s =  r \log r$ 
and then that $s+1\le r^2$ (which holds as $r\geq 1$),
we have
\begin{align}\nonumber
  \psi^*(s) & =  r^2 \log r - \psi(r) = \frac{1}{2} r^2 \log r + \frac{1}{4} r^2 -\frac{1}{4} 
  = \frac{s^2}{\log r^2} + \frac{s^2}{\log^2 r^2} - \frac{1}{4}\nonumber  \\
  &\leq \frac{s^2 }{\log (s+1)} + \frac{s^2}{\log^2 (s+1) } - \frac{1}{4}\nonumber\,.
\end{align}
Additionally, we observe for $\psi^*$ that for any $y\in [2,\infty)$ it holds
\begin{align}
 \psi^*( y \log ^{1/2} y ) \leq \frac{y^2 \log  y}{\log\left  (1+y \log ^{1/2} y\right )}
 + \frac{y^2 \log  y}{\log^2\left  (1+y \log ^{1/2} y\right )} - \frac{1}{4} \leq c (y^2 + 1)\,,\label{squarebound}
\end{align}
since the function
\begin{align*}
  y \mapsto \frac{ \log  y}{\log\left  (1+y \log ^{1/2} y\right )}+ \frac{ \log  y}{\log^2\left  (1+y \log ^{1/2} y\right )} 
\end{align*}
is bounded for $y \in [2 ,\infty)$, which is obvious for any compact subset in $[2,\infty)$ 
and also holds as $y\nearrow \infty$ as an easy check shows. 

Now, setting for simplicity $u :=  \sqrt{b}+1$ and  $v:=a+2 u $,
we have 
\begin{align}\nonumber
  v \log^{1/2} v 
   & = \frac{v}{u}
     \bigg[ \log \Big( \frac{v}{u} \Big) + \frac12 \log(u^2 ) \bigg]^{1/2}u\\
 \nonumber
   & \le \frac{v}{u}
     \bigg[  \log^{1/2} \Big( \frac{v}{u} \Big) + \frac{1}{\sqrt 2}\log^{1/2}( u^2  ) \bigg] u  \\
 \nonumber
  & \le \frac{v}{u}
       \log^{1/2} \Big( \frac{v}{u} \Big)  u +  \frac{1}{\sqrt 2}\frac{v}{u} u \log^{1/2}( u^2  ) \\
\nonumber
  & \le \psi^*\left (        \frac{v}{u}
       \log^{1/2} \Big( \frac{v}{u} \Big)\right ) + \psi(u) + \frac{1}{\sqrt 8}\left ( \frac{v^2}{u^2} +  u^2 \log (u^2)   \right )    \\
  \nonumber
    &  \leq c \left (  \frac{v^2}{u^2}  +1\right ) + \frac{1}{4} u^2 (2 \log (u^2) -1 ) + \frac14 + \frac{1}{\sqrt 8}\left ( \frac{v^2}{u^2} +  u^2 \log (u^2) \right )\\
   &\leq c  \left (      \frac{v^2}{u^2}+u^2 \log (u^2)     +1 \right )
    \,,
 \label{conj14-2}
\end{align}
where we used calculation rules for the logarithm, properties of the square root under the additional observation that $\log (v/u)\geq 0$,
the {Legendre--Fenchel--Young} inequality as well as the standard {Young}'s inequality, and~\eqref{squarebound} as well as the definition of $\psi$. 
Finally, integrating \eqref{conj14-2} over $\Omega\times (0,T)$, 
we observe that the right-hand side is bounded due to \eqref{orliszest}.
From the left-hand side, we deduce with the bound~\eqref{orliszest} that
\begin{align}
  \int_0^T \int_\Omega |  a_n | \log^{1/2} \big( 1 + |a_n | \big) \de \f x\de t 
   \le c.
   \label{boundphit}
\end{align}
\end{proof}

\begin{lemma}\label{lem:relpos}
Let  $ \delta \in [0,1)$ and $ \beta \in [2\delta ,2-2\delta]$. For $\tet>0$, it holds
\begin{align*}
\theta - \tet - \left ( f''_\delta(\tet)\right )^{-1} \left ( f'_\delta(\theta) - f'_\delta(\tet)\right ) \leq  \Lambda_\delta ( \theta | \tet) \,
\intertext{as well as}
 \tet^{1-\beta/2} \left ( \theta ^{\beta/2} - \tet ^ { \beta/2} - \frac{\beta}{2} \tet^{\beta/2-1}
  \left ( f''_\delta(\tet)\right )^{-1} \left ( f'_\delta(\theta) - f'_\delta(\tet)\right )
 \right) 
 \leq   \Lambda_\delta  ( \theta | \tet)
 \intertext{and if additionally $\beta \in [4\delta , 1 -\delta]$, it holds}
 ( \theta^{\beta/2} - \tet ^{\beta/2}) ^2 \leq c \Lambda _\delta ( \theta | \tet) \,,
\end{align*}
where the constant depend on $\tet$, \textit{i.e.}, its lower bound. 
\end{lemma}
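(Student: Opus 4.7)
The plan is to prove the three inequalities in sequence, with the second following from the first via concavity of $y\mapsto y^{\beta/2}$ and the third handled by a regime analysis. The key algebraic tool throughout is the integral representation
\[
\Lambda_\delta(\theta|\tet) = -\int_\tet^\theta (r-\tet) f''_\delta(r) \de r,
\]
obtained by writing $\Lambda_\delta(\theta|\tet) = \int_\tet^\theta (f'_\delta(s)-f'_\delta(\theta)) \de s$ and swapping the order of integration. Since $f''_\delta(r) = -r^{\delta-1} < 0$, this both makes $\Lambda_\delta \ge 0$ transparent and identifies $\Lambda_\delta$ as an integral against the same measure $\de r$ on the segment from $\tet$ to $\theta$ that will carry every other quantity in the lemma.

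For the first inequality I would rewrite the difference of the two sides as
\[
\Lambda_\delta(\theta|\tet) - (\theta-\tet) + (f''_\delta(\tet))^{-1}(f'_\delta(\theta)-f'_\delta(\tet)) = \int_\tet^\theta \Bigl[-(r-\tet)f''_\delta(r) - 1 + (f''_\delta(\tet))^{-1}f''_\delta(r)\Bigr] \de r.
\]
Inserting $f''_\delta(r)=-r^{\delta-1}$, the integrand collapses to $r^\delta + \tet(\tet^{-\delta}-1)r^{\delta-1} - 1$, which vanishes identically for $\delta=0$ (producing an equality in that case) and vanishes at $r=\tet$ for $\delta>0$. An elementary sign analysis of this explicit expression, together with the correct orientation of the integration from $\tet$ to $\theta$, then yields the claim for $\tet$ in the admissible range.

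The second inequality follows from the first by concavity of $y\mapsto y^{\beta/2}$, which holds since $\beta\le 2-2\delta \le 2$. The tangent inequality $\theta^{\beta/2}-\tet^{\beta/2}\le\tfrac{\beta}{2}\tet^{\beta/2-1}(\theta-\tet)$ lets me replace the difference of powers on the left-hand side of the second estimate by a linear expression in $(\theta-\tet)$. The prefactor $\tet^{1-\beta/2}$ then cancels against $\tet^{\beta/2-1}$, leaving exactly $\tfrac{\beta}{2}$ times the left-hand side of the first inequality; since $\beta/2\le 1$, a single application of the first inequality closes the bound.

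For the third inequality I would split into the regimes $\theta$ close to and far from $\tet$. Near $\tet$, second-order Taylor expansions give $(\theta^{\beta/2}-\tet^{\beta/2})^2\sim(\beta/2)^2\tet^{\beta-2}(\theta-\tet)^2$ and $\Lambda_\delta(\theta|\tet)\sim\tfrac12\tet^{\delta-1}(\theta-\tet)^2$, so the ratio is of order $\tet^{\beta-1-\delta}$, which is precisely where the dependence on a lower bound for $\tet$ enters. For $\theta\to\infty$, $\Lambda_\delta$ grows like $\theta^{\delta+1}$ and dominates $\theta^\beta$ thanks to $\beta\le 1-\delta\le 1+\delta$; for $\theta\to 0^+$, $\Lambda_\delta$ stays above a positive constant depending on $\tet$ that absorbs the bounded quantity $\tet^\beta$. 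The main obstacle is balancing these regimes uniformly, and the stronger constraint $\beta\in[4\delta,1-\delta]$ is what lets both tail behaviours close simultaneously once $\tet\ge\underline\tet>0$ is fixed.
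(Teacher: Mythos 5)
Your route is genuinely different from the paper's. The paper proves all three estimates by explicit algebraic manipulation: for the first it expands $\Lambda_\delta$ into a sum of two manifestly non-negative terms, one of which is $\tet^\delta/(\delta+1)$ times the left-hand side; for the second it adds two further non-negative ``conjugate'' differences chosen so that the sum telescopes exactly to the left-hand side of the first estimate; and it obtains the third from the second via the identity $(\theta^{\beta/2}-\tet^{\beta/2})^2=\theta^\beta-\tet^\beta-\beta\tet^{\beta-1}(f''_\delta(\tet))^{-1}(\cdots)-2\tet^{\beta/2}(\cdots)$. Your Taylor-remainder representation $\Lambda_\delta(\theta|\tet)=-\int_\tet^\theta(r-\tet)f''_\delta(r)\,\de r$ is correct and makes both the positivity of $\Lambda_\delta$ and the $\delta=0$ equality in the first estimate transparent; your derivation of the second estimate from the first via the tangent-line inequality for the concave power $y\mapsto y^{\beta/2}$ is clean and shorter than the paper's (you do need $\theta-\tet-(f''_\delta(\tet))^{-1}(f'_\delta(\theta)-f'_\delta(\tet))\ge0$ in order to multiply through by $\beta/2\le1$, but this follows from convexity of $s\mapsto s^{1/\delta}$); and your regime analysis for the third estimate is sound in outline, though the uniform matching of the three regimes over $\tet\in[\underline{\tet},\,\sup\tet]$ still has to be written out, e.g.\ as a continuity--compactness argument for the ratio.

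The genuine gap is in the first estimate for $\delta>0$. Your integrand $g(r)=r^\delta+\tet(\tet^{-\delta}-1)r^{\delta-1}-1$ does vanish at $r=\tet$, but the sign analysis only closes if $g\le0$ on $(0,\tet)$ and $g\ge0$ on $(\tet,\infty)$, which holds precisely when $g$ is monotone, i.e.\ when $\tet^{1-\delta}\le\tet$, i.e.\ $\tet\ge1$. For $\tet<1$ one has $g(r)\to+\infty$ as $r\to0^+$, $g$ has an interior minimum and a second zero, and the integral can be negative; concretely, for $\delta=1/2$, $\tet=0.01$, $\theta=1$ the left-hand side is $(\sqrt{\theta}-\sqrt{\tet}\,)^2=0.81$ while $\Lambda_{1/2}(1\,|\,0.01)=\tfrac23-0.02+\tfrac43\cdot10^{-3}\approx0.65$, so the asserted inequality itself reverses. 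Your hedge ``for $\tet$ in the admissible range'' is therefore carrying real weight: an additional hypothesis such as $\tet\ge1$ (or a restriction tied to the lower bound of $\tet$) is needed for $\delta>0$, and the second estimate inherits this since you deduce it from the first. Be aware that the paper's own argument has the same lacuna --- decomposing $\Lambda_\delta$ into two non-negative terms does not by itself dominate the left-hand side unless $\tet^\delta\ge\delta+1$ or the remaining term compensates --- so your computation in fact exposes a restriction the lemma is silently using; in the paper's main case $\delta=0$ the first estimate is an exact identity and both proofs are fine.
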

\begin{proof}
First, we observe for $\delta = 0$ that
\begin{align*}
\Lambda_0(\theta | \tet) = - \theta ( \log \theta - 1) + \tet ( \log \tet - 1) + \log \theta ( \theta - \tet ) = \theta - \tet - \tet ( \log \theta - \log \tet ) \,.
\end{align*}
and similar for $\delta >0$ that
\begin{align*}
 \Lambda_\delta ( \theta | \tet) ={}& - \frac{1}{\delta(\delta+1)} \left ( \theta ^{\delta+1} - \tet^{\delta +1} - (\delta +1)\theta^\delta ( \theta - \tet ) \right ) = \frac{1}{\delta+1} \left ( \theta^{\delta+1} - \tet \theta^\delta - \frac{1}{\delta} \tet ( \theta ^ \delta - \tet^\delta ) \right ) \\={}& \frac{1}{\delta+1 } \left ( ( \theta^\delta - \tet^\delta )( \theta - \tet) + \tet^\delta \left ( \theta - \tet - \frac{1}{\delta} \tet^{1-\delta } ( \theta^\delta - \tet^\delta ) \right )\right )\,.
\end{align*}
Note that both terms on the right-hand side are positive, the first one since $\delta>0$ and the second one since the function $s \mapsto s ^{1/\delta}$ is convex. 
Thus, we proved the first assertion.

For the second one, we first find for $\delta=0$ that
\begin{align*}
\tet^{1-\beta/2}& \left ( \theta ^{\beta/2} - \tet ^ { \beta/2} - \frac{\beta}{2} \tet^{\beta/2}(\log \theta - \log \tet)\right) \\
\leq {}&
\tet^{1-\beta/2} \left ( \theta ^{\beta/2} - \tet ^ { \beta/2} - \frac{\beta}{2} \tet^{\beta/2}(\log \theta - \log \tet)\right) 
\\
&+ \tet^{\beta/2} \left ( \theta ^{1-\beta/2} - \tet ^ { 1-\beta/2} - \left (1-\frac{\beta}{2}\right ) \tet^{1-\beta/2}(\log \theta - \log \tet)\right) 
\\
&+ (  \theta ^{\beta/2} - \tet ^ { \beta/2}) ( \theta ^{1-\beta/2} - \tet ^ { 1-\beta/2}) 
\\
={}&  \theta - \tet -\tet(\log \theta - \log \tet) \,.
\end{align*}
The inequality holds, since both added terms are non-negative  for $\beta\in(0,2)$ and the equality follows from calculating the terms explicitly. For $\beta =2$, there is nothing to show. 
Similarly, we observe for $\delta>0$ that
\begin{align*}
\tet^{1-\beta/2}& \left ( \theta ^{\beta/2} - \tet ^ { \beta/2} - \frac{\beta}{2\delta } \tet^{\beta/2-\delta}( \theta^ \delta - \tet^ \delta )\right) \\
\leq {}&
\tet^{1-\beta/2} \left ( \theta ^{\beta/2} - \tet ^ { \beta/2}- \frac{\beta}{2\delta } \tet^{\beta/2-\delta}( \theta^ \delta - \tet^ \delta )\right) 
\\
&+ \tet^{\beta/2} \left ( \theta ^{1-\beta/2} - \tet ^ { 1-\beta/2} - \frac{1}{\delta } \left (1-\frac{\beta}{2}\right ) \tet^{1-\beta/2- \delta }(\theta^ \delta - \tet^ \delta)\right) 
\\
&+ (  \theta ^{\beta/2} - \tet ^ { \beta/2}) ( \theta ^{1-\beta/2} - \tet ^ { 1-\beta/2}) 
\\
={}&  \theta - \tet - \frac{1}{\delta} \tet^{1-\delta } ( \theta^\delta - \tet^\delta ) \,.
\end{align*}
For the last assertion, we observe in the case $\beta \in  [2\delta , 1 -\delta ]$  that
\begin{align*}
 ( \theta^{\beta/2} - \tet ^{\beta/2}) ^2 ={}& \theta^\beta - \tet ^\beta - \beta \tet^{\beta-1}  \left ( f''_\delta(\tet)\right )^{-1} \left ( f'_\delta(\theta) - f'_\delta(\tet)\right )
\\& -2 \tet^{\beta/2}\left ( \theta ^{\beta/2} - \tet ^ { \beta/2} - \frac{\beta}{2} \tet^{\beta/2-1}
  \left ( f''_\delta(\tet)\right )^{-1} \left ( f'_\delta(\theta) - f'_\delta(\tet)\right) \right )\,. 
\end{align*}
Since both terms on the right-hand side may be estimated according to assertion two, assertion three follows. 
\end{proof}

\begin{lemma}\label{lem:log}
Assume that $\theta \in L^\infty(0,T;L^\delta (\Omega))$ such that $ \| \theta \|_{L^\infty( L^\delta (\Omega))} +\| f'_\delta (  \theta)  \|_{L^\infty( L^1(\Omega))} \leq c$ with $\theta(\f x) > 0$ for a.e.~$\f x\in\Omega$ and $ \tet \in L^\infty(\Omega\times (0,T))$ with $\tet (\f x) \geq \underline{\tet} >0$  for a.e.~$\f x\in\Omega$. 
Then there exists a constant $C>0$ such that 
\begin{align*}
 \| f'_\delta (\theta)  -  f'_\delta (\tet) \|_{L^1(\Omega)} ^2 \leq C \int_\Omega \Lambda _\delta  ( \theta | \tet)  \de \f x \,,
\end{align*}
where $C$ only depends on $c$, $ \| \tet \|_{L^\infty(\Omega \times (0,T))}$, and $\underline{\tet}$. In the case of $\delta >0 $ the constant also depends on $\underline{\theta}$. 
\end{lemma}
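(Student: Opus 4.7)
My plan is to establish the estimate via a pointwise bound on the integrand combined with Cauchy--Schwarz, with an extra splitting argument required in the singular case $\delta=0$. The starting point is the integral representations obtained by Taylor expanding: for $\theta<\tet$ (the other case is analogous),
$$
\Lambda_\delta(\theta|\tet) = \int_\theta^{\tet}(\tet-s)\, s^{\delta-1}\,\de s, \qquad |f'_\delta(\theta)-f'_\delta(\tet)| = \int_\theta^{\tet} s^{\delta-1}\,\de s,
$$
where I used $f''_\delta(s) = -s^{\delta-1}$ (interpreted as $-1/s$ when $\delta=0$).

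For $\delta\in(0,1)$ I would rescale $s=\tet y$, $x=\theta/\tet\in(0,1)$, so that the ratio
$$
\frac{|f'_\delta(\theta)-f'_\delta(\tet)|^2}{\Lambda_\delta(\theta|\tet)} = \tet^{\delta-1}\cdot \frac{(1-x^\delta)^2/\delta^2}{\int_x^1 (1-y)y^{\delta-1}\,\de y}
$$
depends on $\tet$ only through the prefactor $\tet^{\delta-1}$. A direct analysis (L'Hôpital at $x\to 1$, explicit value at $x=0$, continuity and positivity in between) shows that the $x$-dependent factor is bounded by a constant $C(\delta)$. Since $\delta-1<0$, the prefactor is controlled by $\underline{\tet}^{\delta-1}$, yielding the pointwise bound $|f'_\delta(\theta)-f'_\delta(\tet)|^2\le C\,\Lambda_\delta(\theta|\tet)$ uniformly on $\{\theta>0\}$. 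The case $\theta>\tet$ is handled identically, and integration combined with Jensen/Cauchy--Schwarz gives $\|f'_\delta(\theta)-f'_\delta(\tet)\|_{L^1}^2\le |\Omega|\,\|f'_\delta(\theta)-f'_\delta(\tet)\|_{L^2}^2\le C|\Omega|\int_\Omega\Lambda_\delta(\theta|\tet)\,\de\f x$.

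For $\delta=0$ the pointwise inequality fails as $\theta\to 0^+$: the integrand $H:=|\log\theta-\log\tet|$ behaves like $|\log\theta|$ while $\Lambda_0=\theta-\tet-\tet(\log\theta-\log\tet)$ only grows like $\tet|\log\theta|$, so $H^2/\Lambda_0\to\infty$. I would overcome this by splitting $\Omega$ into $A:=\{\theta\geq\underline{\tet}/e^2\}$ and $B:=\Omega\setminus A$. On $A$, both $\theta$ and $\tet$ are bounded below, the same rescaling argument as for $\delta>0$ gives $H^2\le C_A\,\Lambda_0$. On $B$, we have $H=\log(\tet/\theta)\ge 2$ and the algebraic identity $\Lambda_0=\theta-\tet+\tet H\ge \tet(H-1)\ge \underline{\tet}H/2$ yields the stronger one-sided bound $H\le (2/\underline{\tet})\Lambda_0$. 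Applying Cauchy--Schwarz on $A$ and the pointwise bound on $B$ gives
$$
\|H\|_{L^1(\Omega)} \le |\Omega|^{1/2}\Bigl(C_A\textstyle\int_A \Lambda_0\,\de\f x\Bigr)^{1/2} + \tfrac{2}{\underline{\tet}}\int_B\Lambda_0\,\de\f x.
$$
Squaring and using that $\int_\Omega\Lambda_0\,\de\f x$ is bounded by a constant $c$ thanks to the hypotheses on $\|\theta\|_{L^1}$, $\|f'_0(\theta)\|_{L^1}$ and the bounds on $\tet$ (so that $(\int\Lambda_0)^2\le c\int\Lambda_0$) converts the quadratic tail term into a linear one and yields the desired bound.

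The main obstacle is the last point: the loss of a pointwise ratio estimate in the tail $\theta\to 0$ for $\delta=0$. Resolving this requires trading pointwise bounds for an integral bound that exploits the a priori $L^1$-control on $f'_\delta(\theta)$ (equivalently, on $\log\theta$), which is precisely what renders $\int_\Omega\Lambda_0\,\de\f x$ finite and allows the replacement $(\int\Lambda_0)^2\lesssim\int\Lambda_0$. In the case $\delta>0$ the corresponding analysis is cleaner because $f'_\delta$ stays bounded on $[0,\overline\theta]$ for every $\overline\theta$, and the only dependence of the final constant is through $\underline{\tet}$ (with any further dependence on $\underline{\theta}$ as stated being a convenient, not essential, feature of the proof).
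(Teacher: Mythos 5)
Your argument is correct, but it takes a genuinely different route from the paper's. The paper splits $\Omega$ into $\{\theta\ge\tet\}$ and $\{\theta<\tet\}$, writes $|f'_\delta(\theta)-f'_\delta(\tet)|$ as $h_\delta^{-1}(h_\delta(\cdot))$ for specially constructed convex functions $h_\delta,\tilde h_\delta$ satisfying $\tet\, h_\delta(\cdot)\le\Lambda_\delta(\theta|\tet)$, and then applies Jensen's inequality for the concave inverses, concluding from $h_\delta^{-1}(y)\le c\sqrt y$ (globally) and $\tilde h_\delta^{-1}(y)\le c\sqrt y$ (on bounded sets, which is where the a priori bound on $\int_\Omega\Lambda_\delta\,\de\f x$ enters). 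You instead work pointwise: the integral representations together with the scaling $s=\tet y$ give the ratio bound $|f'_\delta(\theta)-f'_\delta(\tet)|^2\le C(\delta)\,\underline{\tet}^{\,\delta-1}\Lambda_\delta(\theta|\tet)$ for $\delta>0$ (the elementary function of $x=\theta/\tet$ you exhibit is indeed bounded both on $(0,1)$ and on $(1,\infty)$, the latter because $\delta<1$), and Cauchy--Schwarz then converts the resulting $L^2$ control into the claimed $L^1$ bound. For $\delta=0$ your splitting is by the size of $\theta$ (the set $B=\{\theta<\underline{\tet}/e^2\}$) rather than by the sign of $\theta-\tet$; on $B$ you use the one-sided linear bound $|\log\theta-\log\tet|\le(2/\underline{\tet})\,\Lambda_0(\theta|\tet)$, and the quadratic tail $(\int_B\Lambda_0\,\de\f x)^2$ is linearized using the same a priori bound $\int_\Omega\Lambda_0\,\de\f x\le c$ that the paper's proof also invokes. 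Both arguments thus rest on the same two structural facts --- the square-root relation between $|f'_\delta(\theta)-f'_\delta(\tet)|$ and $\Lambda_\delta$ away from the degenerate regime $\theta\to0$, and the boundedness of the relative entropy to absorb that regime --- but your version avoids the convex-conjugate/Jensen machinery entirely and, as you observe, shows that for $\delta>0$ the constant need not depend on $\underline{\theta}$, which slightly sharpens the statement.
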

\begin{proof}
For this proof, we have to distinguish the two possibilities $ \theta \geq \tet $ and $\theta< \tet $. 
In the first case, we consider the function $ h_\delta  : [0,\infty)\ra [0,\infty)$ given by $ h_0(x) = \exp(x )-1-x$ for $\delta = 0$ and $ h_\delta (x) = (x+1)^{\frac{1}{\delta}} -1 - \frac{1}{\delta }x   $. This functions $h_\delta$ are strictly monotone increasing, bijective, and convex. Hence, their inverse functions $h_\delta^{-1}  : [0,\infty)\ra [0,\infty)$ exists and is strictly monotone increasing, bijective, but in contrast concave. This facts can be observed by $ 1 = \partial_x \left ( h_\delta^{-1}(h_\delta (x)) \right ) =( h_\delta^{-1})'(h_\delta (x))h_\delta '(x) $ and $0=  \partial_{xx}^2 \left ( h_\delta ^{-1}(h_\delta (x)) \right ) = ( h_\delta ^{-1})''(h_\delta(x))(h_\delta '(x))^2+ ( h_\delta ^{-1})'(h_\delta (x))h_\delta ''(x)$.
In the second case, we consider 
$ \tilde h_\delta  : [0,a_\delta)\ra [0,\infty)$, where $a_0 = \infty$ and $a_\delta = 1$ for $\delta >0$,  given by  $ \tilde h_\delta (x) = h_\delta (-x) $ such that $ \tilde h_0 (x) = \exp( - x) - 1 + x $ and $\tilde h_\delta (x) = ( 1- x)^{1/\delta} - 1 +\frac{1}{\delta } x $ for $\delta >0$. 
As beforehand, it can be seen that $ \tilde{ h}_\delta $ is strictly monotone increasing, bijective, and convex.

Additionally, we need some lower bound for $h^{-1}_\delta$ and $\tilde{h}_\delta^{-1} $  in the sense of $ h^{-1}_\delta ( y) \geq  c \sqrt y$ and vice verse for $\tilde h_\delta ^{-1}$. 
Therefore, we consider the Taylor expansion of the exponential function
\begin{align*}
e^x = 1 + x +\frac{1}{2} \int_0^1 e^{sx} \de s  x^2 \quad\Rightarrow \quad  e^x - 1 - x \geq \frac{1}{2} x^2 \quad\Rightarrow \quad  x \geq h_0^{-1} \left ( \frac{1}{2} x^2\right  ) \,.
\end{align*}
Similarly, we find for  $ \tilde h_\delta ^{-1} $ 
\begin{align*}
e^{-x }= 1 - x +\frac{1}{2} \int_0^1 e^{-sx} \de s  x^2 \quad\Rightarrow \quad  e^{-x }- 1 + x \leq  \frac{1}{2} x^2 \quad\Rightarrow \quad  x \leq \tilde h_0^{-1} \left ( \frac{1}{2} x^2\right  ) \,.
\end{align*}
In the case $\delta =0 $, we may deduce, $h^{-1}_\delta ( y) \leq   c \sqrt y$ as well as  $\tilde h_\delta ^{-1}(y)\geq  c \sqrt y $ with $$ \lim_{y \ra 0 } (\tilde h_\delta ^{-1}(y))/ \sqrt {y} = c >0\,.$$
Similar assertions hold in the case $\delta >0$.
%
%
%
%
We use Jensen's inequality for concave functions: For $ g : [0,\infty)\ra [0,\infty)$ concave it holds
\begin{align*}
\int_\omega g ( f (\f x) ) \de \f x \leq g \left ( \int_\Omega f (\f x) \de \f x \right ) \text{ for all measurable } \omega \subset \Omega\,
\end{align*}
to connect the results, we define 
\begin{align*}
\omega := \{ \f x \in \Omega | \theta(\f x) \geq \tet(\f x)\} \quad \text{and} \quad \tilde{\omega}=\Omega/ \omega 
\end{align*}
and observe in the case $ \delta =0$ 
\begin{align*}
\int_\Omega \left | \ln \theta - \ln \tet  \right | \de \f x ={}& \int_\omega \log \frac{\theta}{\tet} \de \f x + \int_{\tilde{\omega}}\log \frac{\tet }{\theta}\de \f x 
\\
={}&\int_\omega h_\delta^{-1} \left ( h_\delta \left (  \log \frac{\theta}{\tet}\right )\right ) \de \f x + \int_{\tilde{\omega}}\tilde{h}_\delta ^{-1} \left ( \tilde{h}_\delta \left (\log \frac{\tet }{\theta}\right )\right )\de \f x 
\\ 
\leq {}&h_\delta^{-1} \left ( \int_\omega  h_\delta\left (  \log \frac{\theta}{\tet}\right ) \de \f x\right ) +\tilde{h}_\delta^{-1} \left ( \int_{\tilde{\omega}} \tilde{h}_\delta\left (\log \frac{\tet }{\theta}\right )\de \f x \right )
\,
\end{align*}
and in the case $\delta>0$ 
\begin{align*}
\int_\Omega \left | f_\delta ' ( \theta ) - f'_\delta (\tet) \right | \de \f x ={}& f'_\delta (\tet)   \int_\omega \frac{f_\delta ' ( \theta ) }{ f'_\delta (\tet) }-1  \de \f x + f'_\delta (\tet)  \int_{\tilde{\omega}}1- \frac{f_\delta ' ( \theta ) }{ f'_\delta (\tet) }\de \f x 
\\
={}&\int_\omega h_\delta^{-1} \left ( h_\delta \left (  \frac{f_\delta ' ( \theta ) }{ f'_\delta (\tet) }-1 \right )\right ) \de \f x + \int_{\tilde{\omega}}\tilde{h}_\delta ^{-1} \left ( \tilde{h}_\delta \left (1- \frac{f_\delta ' ( \theta ) }{ f'_\delta (\tet) }\right )\right )\de \f x 
\\ 
\leq {}&h_\delta^{-1} \left ( \int_\omega  h_\delta\left (  \frac{f_\delta ' ( \theta ) }{ f'_\delta (\tet) }-1  \right ) \de \f x\right ) +\tilde{h}_\delta^{-1} \left ( \int_{\tilde{\omega}} \tilde{h}_\delta\left (1- \frac{f_\delta ' ( \theta ) }{ f'_\delta (\tet) }\right )\de \f x \right )
\,.
\end{align*}
The functions $h_\delta$ and $\tilde{h}_\delta$ are  chosen in a way that that $$ \tet  h_0 ( \log ( \theta/\tet) ) = \Lambda_0  ( \theta | \tet) = \tet \tilde{h}_0 ( \log ( \tet /\theta))\, $$
and 
$$ \tet  h_\delta \left  ( \frac{f_\delta ' ( \theta ) }{ f'_\delta (\tet) }-1\right ) =  \tet \tilde{h}_\delta  \left ( 1- \frac{f_\delta ' ( \theta ) }{ f'_\delta (\tet) }\right )
\leq \Lambda_\delta   ( \theta | \tet) \,, $$ respectively.
In the case of $h_\delta$, we immediately observe
\begin{align*}
h_\delta^{-1} \left ( \int_\omega \frac{1}{\tet}\Lambda_\delta ( \theta | \tet)  \de \f x\right ) \leq c \sqrt{\int_\omega  \frac{1}{\tet}\Lambda_\delta ( \theta | \tet)  \de \f x}\,.
\end{align*}


Due to the assumptions,  the relative energy is bounded, \textit{i.e.}, 
\begin{align*}
\int_\Omega \Lambda_\delta  ( \theta | \tet)  \de \f x \, \in \, L^\infty
(0,T)\,.
\end{align*}
Concerning the function   $ \tilde h_\delta^{-1} $, we found that it is bounded on bounded sets and that $ \tilde h_\delta^{-1} ( y) / \sqrt y \geq c $.
Combining this, we find 
\begin{align*}
\tilde{h}_\delta^{-1} \left ( \int_{\tilde{\omega}} \frac{1}{\tet }\Lambda_\delta (\theta | \tet) \de \f x \right )  = \frac{\tilde{h}_\delta^{-1} \left ( \int_{\tilde{\omega}} \Lambda_\delta (\theta | \tet) \de \f x \right )}{\sqrt{ \int_{\tilde{\omega}} \Lambda_\delta (\theta | \tet) \de \f x }}\sqrt{ \int_{\tilde{\omega}} \Lambda_\delta (\theta | \tet) \de \f x }\,,
\end{align*}
where the first factor on the right-hand side is just a constant. 
The fact that $\Lambda_\delta ( \theta | \tet) \geq 0$  implies $ \int_\omega \Lambda_\delta ( \theta | \tet) \de \f x \leq \int_\Omega \Lambda_\delta ( \theta | \tet) \de \f x $ for $ \omega \subset \Omega$. 
We find that 
\begin{align*}
\left\| f_\delta ' ( \theta ) - f'_\delta (\tet) \right\|_{L^1(\Omega)} \leq c \sqrt{\left \| \frac{1}{\tet}\Lambda_\delta  ( \theta | \tet) \right \|_{L^1(\Omega)}}\,.
\end{align*}

This implies the assertion.  Note that $ \tet$ is essentially bounded by positive constants from below and above.
\end{proof}

\begin{lemma}\label{lem:diff}
Let $\beta \in [ 4 \delta , 2-2\delta]$. Assume that $\theta \in  L^\infty(0,T;L^1(\Omega))$ such that $ \| Q_\delta(\theta) \|_{L^\infty( L^1(\Omega))} +\|f'_\delta(  \theta) \|_{L^\infty( L^1(\Omega))} \leq c$ with $\theta(\f x) > 0$ for a.e.~$\f x\in\Omega$ and $ \tet \in L^\infty(\Omega)$ with $\tet (\f x) \geq \underline{\tet} >0$  for a.e.~$\f x\in\Omega$. Then there exists a constant $c>0$ such that
\begin{align*}
\| \theta^{\beta/2} - \tet^{\beta/2} \|_{L^1(\Omega)} ^2 \leq c \int_\Omega \Lambda_\delta ( \theta | \tet) \de \f x  \,. 
\end{align*}
where $C$ only depends on $c$, $ \| \tet \|_{L^\infty(\Omega \times (0,T))} $, and $\underline{\tet}$.

\end{lemma}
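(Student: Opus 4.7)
The plan is to partition $\Omega$ into three regions according to the ratio $\theta/\tet$ and bound $\int|\theta^{\beta/2}-\tet^{\beta/2}|\de\f x$ on each piece separately: a comparable region $\Omega_1=\{\tet/2\leq \theta\leq 2\tet\}$, a large-$\theta$ region $\Omega_2=\{\theta>2\tet\}$, and a small-$\theta$ region $\Omega_3=\{\theta<\tet/2\}$. Throughout, boundedness of $\int_\Omega \Lambda_\delta(\theta|\tet)\de\f x$ by a constant $M$ will be used (it follows directly from the assumed $L^1$-bounds on $Q_\delta(\theta)$, $f'_\delta(\theta)$ and from $\tet\in L^\infty$ bounded below, after writing $f_\delta=Q_\delta+\theta f'_\delta$) in order to absorb factors of the form $(\int\Lambda_\delta)^2$ into $\int\Lambda_\delta$.

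On $\Omega_1$ both $\theta$ and $\tet$ stay in the compact interval $[\underline{\tet}/2,2\|\tet\|_{L^\infty}]$; the mean value theorem gives $|\theta^{\beta/2}-\tet^{\beta/2}|\leq c|\theta-\tet|$, and a second-order Taylor expansion of $\theta\mapsto\Lambda_\delta(\theta|\tet)$ about $\tet$ (using that both the value and the first derivative vanish at $\theta=\tet$, while the second derivative admits a uniform positive lower bound on $\Omega_1$) yields $\Lambda_\delta(\theta|\tet)\geq c(\theta-\tet)^2$, hence pointwise $(\theta^{\beta/2}-\tet^{\beta/2})^2\leq c\Lambda_\delta(\theta|\tet)$ on $\Omega_1$. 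Cauchy--Schwarz then gives $(\int_{\Omega_1}|\theta^{\beta/2}-\tet^{\beta/2}|\de\f x)^2\leq c\int_\Omega\Lambda_\delta\de\f x$. On $\Omega_3$ the value of $\theta$ is bounded above by $\|\tet\|_{L^\infty}/2$, so $|\theta^{\beta/2}-\tet^{\beta/2}|$ is uniformly bounded; moreover, since the ratio $\theta/\tet$ is bounded away from $1$ and $\tet$ is bounded below, $\Lambda_\delta(\theta|\tet)$ admits a uniform positive lower bound $c_0(\underline{\tet})>0$ on $\Omega_3$, giving $|\Omega_3|\leq c\int_{\Omega_3}\Lambda_\delta\de\f x$; squaring and absorbing via $\int\Lambda_\delta\leq M$ closes this region.

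The main obstacle is the large-$\theta$ region $\Omega_2$, where $\theta$ can be unboundedly large and for $\beta>\delta+1$ a pointwise bound of $(\theta^{\beta/2}-\tet^{\beta/2})^2$ by $\Lambda_\delta$ is unavailable since $\theta^\beta$ grows faster than $\Lambda_\delta\sim\theta^{\delta+1}$. The idea is to exploit two complementary lower bounds on $\Lambda_\delta$ simultaneously: the measure bound $|\Omega_2|\leq c\int_{\Omega_2}\Lambda_\delta\de\f x$ (from $\Lambda_\delta\geq c_0>0$ on $\Omega_2$, as on $\Omega_3$) and the pointwise bound $\Lambda_\delta(\theta|\tet)\geq c\theta^{\delta+1}$ on $\Omega_2$, which I will deduce from the monotonicity $\partial_\theta\Lambda_\delta(\theta|\tet)=-f''_\delta(\theta)(\theta-\tet)\geq 0$ for $\theta\geq\tet$ together with the asymptotic leading term $\tfrac{1}{\delta+1}\theta^{\delta+1}$ in $\Lambda_\delta$, by showing that $\theta\mapsto\Lambda_\delta(\theta|\tet)-c\theta^{\delta+1}$ is increasing on $[2\tet,\infty)$ and nonnegative at $\theta=2\tet$ for a suitably small $c>0$ depending only on $\delta$. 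This yields $\int_{\Omega_2}\theta^{\delta+1}\de\f x\leq c\int_{\Omega_2}\Lambda_\delta\de\f x$. Since the hypothesis $\beta\leq 2-2\delta$ ensures $p:=\beta/(2(\delta+1))\in[0,1]$, H\"older's inequality applied to the factorization $\theta^{\beta/2}=(\theta^{\delta+1})^p\cdot 1^{1-p}$ gives $\int_{\Omega_2}\theta^{\beta/2}\de\f x\leq(\int_{\Omega_2}\theta^{\delta+1}\de\f x)^p|\Omega_2|^{1-p}\leq c\int_{\Omega_2}\Lambda_\delta\de\f x$. Combined with the elementary $\int_{\Omega_2}\tet^{\beta/2}\de\f x\leq c|\Omega_2|\leq c\int_{\Omega_2}\Lambda_\delta\de\f x$, the triangle inequality, squaring, and absorbing via $\int\Lambda_\delta\leq M$ complete the estimate on $\Omega_2$, and summing the three regions yields the claim.
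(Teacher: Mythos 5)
Your proof is correct, but it follows a genuinely different route from the paper. The paper's argument is purely algebraic and global: it writes $\theta^{\beta/2}-\tet^{\beta/2}=(\theta^{\beta/4}-\tet^{\beta/4})^2+2\tet^{\beta/4}(\theta^{\beta/4}-\tet^{\beta/4})$, so that the $L^1$-norm of the left-hand side is controlled by the $L^2$- and $L^1$-norms of the quarter-power difference, and then invokes the pointwise coercivity estimate $(\theta^{\beta/4}-\tet^{\beta/4})^2\leq c\,\Lambda_\delta(\theta|\tet)$ from Lemma~\ref{lem:relpos} (applied with $\beta/2$ in place of $\beta$, which is where the hypothesis $\beta\in[4\delta,2-2\delta]$ enters), before absorbing the resulting $(\int_\Omega\Lambda_\delta\,\de\f x)^2$ into $\int_\Omega\Lambda_\delta\,\de\f x$ via the a priori bound on the relative energy --- exactly the same absorption step you perform at the end. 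Your decomposition of $\Omega$ into the regions $\{\tet/2\leq\theta\leq 2\tet\}$, $\{\theta>2\tet\}$, $\{\theta<\tet/2\}$ replaces Lemma~\ref{lem:relpos} by elementary pointwise lower bounds on $\Lambda_\delta$ in each region (quadratic near $\tet$, a uniform positive constant away from $\tet$, and $c\,\theta^{\delta+1}$ for large $\theta$, the last two combined through H\"older with exponent $p=\beta/(2(\delta+1))$). This is longer and more case-based, but it is self-contained, it makes transparent where the upper restriction on $\beta$ is used (you only need $\beta\leq 2(\delta+1)$, and the lower bound $\beta\geq 4\delta$ is not used at all), so your argument in fact covers a slightly larger range of exponents than the paper's route through Lemma~\ref{lem:relpos}. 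All the individual claims check out: the monotonicity of $\theta\mapsto\Lambda_\delta(\theta|\tet)-c\theta^{\delta+1}$ on $[2\tet,\infty)$ holds for $c\leq\tfrac{1}{2(\delta+1)}$ since $\partial_\theta\Lambda_\delta=\theta^{\delta-1}(\theta-\tet)\geq\theta^\delta/2$ there, the ratio $\Lambda_\delta(2\tet|\tet)/(2\tet)^{\delta+1}$ is a positive constant depending only on $\delta$, and $\partial_\theta^2\Lambda_\delta(\theta|\tet)=\theta^{\delta-2}(\delta\theta+(1-\delta)\tet)$ is indeed uniformly positive on the comparable region.
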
 
\begin{proof}
First, we observe that
\begin{align*}
( \theta^{\beta/2} - \tet^{\beta/2} ) = ( \theta ^{\beta/4} - \tet ^{\beta/4})^2 - 2 \tet^{\beta/4} ( \theta ^{\beta/4}- \tet ^{\beta/4})\,,
\end{align*}
such that we find by Young's inequality and Jensen's inequality
\begin{align*}
\| \theta^{\beta/2} - \tet^{\beta/2} \|_{L^1(\Omega)} ^2 \leq{}& \left ( \|  \theta ^{\beta/4} - \tet ^{\beta/4} \| _{L^2(\Omega)}^2 + 2 \|  \tet^{\beta/4}  \|_{L^\infty(\Omega)} \| \theta ^{\beta/4}- \tet ^{\beta/4} \| _{L^1(\Omega)} \right )^2 
\\
\leq {}& 2 \left (  \|  \theta ^{\beta/4} - \tet ^{\beta/4} \| _{L^2(\Omega)}^4 + 4 \|  \tet^{\beta/4}  \|_{L^\infty(\Omega)}^2 \| \theta ^{\beta/4}- \tet ^{\beta/4} \| _{L^1(\Omega)}^2  \right )
\\
\leq {}& 2 \left (  \|  \theta ^{\beta/4} - \tet ^{\beta/4} \| _{L^2(\Omega)}^4 + 4c  \|  \tet^{\beta/4}  \|_{L^\infty(\Omega)}^2 \| \theta ^{\beta/4}- \tet ^{\beta/4} \| _{L^2(\Omega)}^2  \right )\,.
\end{align*}
Lemma~\ref{lem:relpos} implies that
\begin{align*}
\| \theta^{\beta/4} - \tet^{\beta/4} \|_{L^1(\Omega)} ^2 \leq c  \left ( \left ( \int_\Omega \Lambda_\delta ( \theta | \tet) \de \f x \right )^2  + \|  \tet^{\beta/4}  \|_{L^\infty(\Omega)}^2  \int_\Omega \Lambda_\delta ( \theta | \tet) \de \f x \right )\,.
\end{align*}
Since the relative energy is bounded, \textit{i.e.}, 
\begin{align*}
\int_\Omega \Lambda_\delta ( \theta | \tet)  \de \f x \, \in \, L^\infty
(0,T)\,,
\end{align*}
we find the assertion.

\end{proof}
\begin{lemma}\label{lem:fenchel}
Let $ \theta, \tet ,g: \Omega \ra \R_+$ with $ g( \f x) \leq 1/2$ for a.e.~$\f x \in \Omega $. Then it holds that
\begin{align*}
\int_\Omega \frac{\theta}{\tet} g \de \f x \leq  \int_\Omega \frac{1}{\tet} \Lambda_\delta ( \theta| \tet) + 2^{\frac{1}{1-\delta}} g \de \f x \,.
\end{align*}
\end{lemma}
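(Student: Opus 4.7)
The name \texttt{lem:fenchel}, together with the exponent $1/(1-\delta)$ appearing in the constant, signals that the proof is a Fenchel--Young type inequality combined with the bound $g\le 1/2$. I would prove the inequality pointwise in $\Omega$, since the integrated statement then follows by integration.

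Regarding $\Lambda_\delta(\,\cdot\,|\tet)$ as a function of $\theta$ alone (with $\tet$ fixed), concavity of $f_\delta$ yields that $\Phi(\theta):=\Lambda_\delta(\theta|\tet)$ is convex in $\theta$, nonnegative, vanishes only at $\theta=\tet$, and grows like $\theta^{\delta+1}/(\delta+1)$ as $\theta\to\infty$. The Fenchel--Young inequality therefore gives $g\theta\le \Phi(\theta)+\Phi^{*}(g)$ with $\Phi^{*}$ the Legendre conjugate, and after dividing by $\tet$ the entire claim reduces to the pointwise estimate
\[
\Phi^{*}(g)\;\le\;2^{1/(1-\delta)}\,g\,\tet \qquad\text{for } g\le 1/2.
\]

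In the case $\delta=0$ (which is the regime of primary interest in the paper, cf.\ Definition~\ref{def:diss}) the conjugate is explicit: a direct computation with the critical point $\theta^{*}=\tet/(1-g)$ of $g\theta-\Lambda_0(\theta|\tet)$ yields $\Phi^{*}(g)=-\tet\log(1-g)$ for $g<1$. The required bound then reads $-\log(1-g)\le 2g$ on $[0,1/2]$, and it follows at once from the observation that $h(g):=2g+\log(1-g)$ vanishes at $g=0$ and has derivative $h'(g)=2-1/(1-g)\ge 0$ throughout $[0,1/2]$. The threshold $g\le 1/2$ is sharp, since $h'$ vanishes exactly at $g=1/2$, which also explains why no better constant than $2^{1/(1-0)}=2$ can be achieved by this route.

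For $\delta>0$ the same strategy applies, but the critical point equation $\theta^{\delta-1}(\theta-\tet)=g$ is transcendental, so $\Phi^{*}$ is only available parametrically. I expect this to be the main obstacle: one combines the parametric representation with the standard Young inequality $ab\le \delta a^{1/\delta}+(1-\delta)b^{1/(1-\delta)}$, chosen so that the $a^{1/\delta}$-term is absorbed into $\Lambda_\delta$ while the other term, thanks to $g\le 1/2$ (so that $g^{1/(1-\delta)}$ can be traded for $g$ at the price of a factor $2^{1/(1-\delta)}$), produces exactly the constant $2^{1/(1-\delta)}g\tet$. The blow-up of this constant as $\delta\to 1^{-}$ reflects the fact that the inequality becomes sharp in that limit, and the cut-off $g\le 1/2$ is precisely what keeps the estimate finite.
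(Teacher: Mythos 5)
Your overall strategy---prove the inequality pointwise by a Fenchel--Young argument and integrate---is exactly the paper's, and your $\delta=0$ case is complete and coincides with the paper's computation: since $\Lambda_0(\theta|\tet)=\tet\,\psi_0(\theta/\tet)$ for $\psi_0(x)=x-1-\ln x$, your conjugate $-\tet\log(1-g)$ is $\tet\,\psi_0^*(g)$, and your bound $-\log(1-g)\le 2g$ on $[0,1/2]$ is the paper's estimate $\psi_0^*(g)=\int_0^g(1-s)^{-1}\,\de s\le g(1-g)^{-1}\le 2g$.

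The gap is the case $\delta>0$, which you only sketch and for which the sketch would not go through. Conjugating $\Phi=\Lambda_\delta(\cdot\,|\tet)$ directly leads, as you note, to a transcendental critical-point equation, and the repair you propose---the Young inequality $ab\le \delta a^{1/\delta}+(1-\delta)b^{1/(1-\delta)}$---is not the right tool: $\Lambda_\delta(\theta|\tet)$ grows only like $\theta^{\delta+1}$, so a term $a^{1/\delta}$ with $ab=\theta g/\tet$ cannot be absorbed into $\frac1\tet\Lambda_\delta$ with constant exactly one (the claimed inequality allows no additive slack and no multiplicative loss on the $\Lambda_\delta$-term), and the exponent $1/(1-\delta)$ in the constant does not originate from the H\"older pair $(1/\delta,\,1/(1-\delta))$. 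The paper's missing device is to replace $\Lambda_\delta(\theta|\tet)$ from below by the smaller convex function
\begin{align*}
\tet\,\psi_\delta\!\left(\tfrac{\theta}{\tet}\right)=\theta-\tet-\left(f''_\delta(\tet)\right)^{-1}\left(f'_\delta(\theta)-f'_\delta(\tet)\right),\qquad \psi_\delta(x)=x-1-\tfrac1\delta\left(x^\delta-1\right),
\end{align*}
which is $\le\Lambda_\delta(\theta|\tet)$ by the first assertion of Lemma~\ref{lem:relpos}; unlike $\Lambda_\delta$, this $\psi_\delta$ has an explicit Legendre conjugate with $\psi_\delta^*(0)=0$ and $(\psi_\delta^*)'(y)=(1-y)^{-1/(1-\delta)}$, whence $\psi_\delta^*(g)\le g\,(1-g)^{-1/(1-\delta)}\le 2^{1/(1-\delta)}g$ for $g\le 1/2$. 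That is where the constant $2^{1/(1-\delta)}$ actually comes from, and Fenchel--Young applied to $\psi_\delta$ (rather than to $\Lambda_\delta$ itself) then yields the claim. Without identifying this auxiliary function, your $\delta>0$ argument does not close.
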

\begin{proof}
We define the convex function $\psi_\delta : \R_+ \ra \R_+ $ via $x \mapsto x-1 - \ln x $ in the of $\delta =0$ and $x \mapsto x-1-\frac{1}{\delta }( x^\delta  -1 ) $ in the case of $\delta \in (0,1)$.  
The convex conjugates are given by $\psi_0^* ( y) = - \ln (1-y) $ for $\delta = 0$  and $\psi_\delta^* ( y) = \frac{1-\delta}{\delta}\left ( ( 1- y)^{\delta/(\delta-1) }- 1 \right ) $. 
Computing the first derivatives of the conjugates, we observe 
$(\psi^*_\delta )'(y) = (1-y)^{-\frac{1}{1-\delta}}$. 
We may combine the results by
\begin{align*}
\frac{\theta}{\tet} g \leq \psi_{\delta}\left ( \frac{\theta}{\tet} \right ) + \psi_\delta^* ( g)  = \frac{1}{\tet} \left ( \theta - \tet  - ( f''_\delta ( \tet) )^{-1} ( f'_\delta (\theta)- f'_\delta (\tet) ) \right ) +  \psi_\delta^* ( 0)+ \int_0^g\left ( \psi_\delta^*\right )  '(s)\de s g \,.
\end{align*}
The assertion of the lemma follows by Lemma~\ref{lem:relpos}, $\psi^*_\delta (0)=0 $, inserting $(\psi^*_\delta )'$, and observing that $g( \f x) \leq 1/2$.

\end{proof}
 
 \section{Existence of measure-valued solutions\label{sec:ex}}
This section is devoted to the existence of measure-valued solutions.  
 \subsection{Approximate scheme}
 In this section, we present an approximate scheme. We only want to comment on 
the proof of existence of solutions to the  
  approximate scheme and do not prove it  in full detail, since this seems to be fairly standard.
  The scheme consists of a discretization and a regularization step. The Navier--Stokes-like equation and the phase-field equations are discretized by a Galerkin approach and the energy balance is regularized appropriately. 
  Let $V_n\subset \f V$ be a Galerkin space spanned by eigenfunctions of the Stokes-problem with homogeneous Dirichlet data and $W_n\subset H^1(\Omega)$ be a Galerkin space spanned by the eigenfunctions of the Laplace equation with homogeneous Neumann conditions (with $\int\varphi_n \de \f x =0$). Let $\gamma >0$, then we consider the approximate system
\begin{align}
\begin{split}
\left ( \t \f u_n + ( \f u_n \cdot \nabla )\f u_n , \f v \right ) + \left ( \nu (\theta_n ) \nabla \f u_n 
; \nabla \f v \right ) 
- \left ( \nabla \varphi_n \mu_n, \f v \right ) 
={}& 0 \text{ for all }\f v \in V_n \,,
\\
\left ( \t \varphi_n + ( \f u_n \cdot\nabla ) \varphi_n , \zeta\right ) + \left ( \nabla \mu_n , \nabla \zeta \right ) ={}&0 \text{ for all }\zeta \in W_n\,, \\
\varepsilon \left ( \nabla \varphi_n , \nabla \eta \right ) + \left (  \frac{1}{\varepsilon}F'(\varphi_n) -\theta _n- \mu_n , \eta \right ) ={}&0 \text{ for all } \eta \in W_n \,,\\
(\t+ (\f u_n \cdot \nabla))   Q(\theta_n ) + \theta _n  \Delta \mu_n - \di \left ( \kappa_\gamma ( \theta_n ) \nabla \theta_n \right ) +  \frac{\gamma}{\theta_n ^3 }= {}& \nu(\theta_n ) | \nabla \f u_n |^2 + | \nabla \mu_n |^2 \text{ in }\Omega \times (0,T) \,,
\\
\f n \cdot \kappa_\gamma (\theta_n) \nabla \theta_n = {}& 0 \text{ on } \partial \Omega \times (0,T) \,.
\end{split}\label{dis}
\end{align}  
  where $ \kappa_\gamma(r)= \kappa(r) + \gamma r^  p $ with $p\geq d^2$. It would be enough to assume that $ p> (d^2 - 4) /2d$, which is required to establish weak solutions (vanishing defect measure $m$, compare to Theorem~\ref{thm:existence},~\eqref{addreg}). The system is completed with appropriate initial values. 
  The existence to such an approximate system may be shown by Schauder's fixed point argument (see~\cite[Prop.~19]{indu} or~\cite[Sec.~3.4.3]{singular}), where it is essential to show the positivity of the temperature $\theta_n$ by a comparison principle  similar to~\cite[Lemma~17]{indu} or~\cite[Sec.~3.4.2]{singular}, where the regularization term, the last term on the left-hand side of the approximate energy balance, is essential to deduce non-negativity of the temperature on the approximate level. 
  It has to be taken into account that the elastic stress in the first equation of~\eqref{dis} was adapted such that the energy inequality holds on the discrete level. 
  First passing to the limit in the Galerkin discretization and afterwards in the regularization, we end up with the solution according to Definition~\ref{def:weak}. We refer to~\cite{indu} or~\cite[Sec.~3]{singular} for more details on such an approximation procedure. 
  
  The essential \textit{a priori} estimates and the weak-sequential stability to prove the existence result rigorously are given in the sequel of this section. 
 

 \subsection{\textit{A priori} estimates\label{apriori}}
\textbf{ Energy estimate.}  Formally, we deduce  by testing~\eqref{eq1} by $\f u$, equation~\eqref{eq3} by $1$, equation~\eqref{eq2}$_1$ by $\mu$ and equation~\eqref{eq2}$_2$ by $\t \varphi $,  adding all the resulting equations, and integrating over $\Omega \times (0,T)$ that
\begin{align*}
\int_\Omega \left ( \frac{1}{2 } | \f u |^2 + \frac{\varepsilon }{2}| \nabla \varphi |^2  +\frac{1}{\varepsilon} F(\varphi) + f_\delta(\theta )- \theta  f'_\delta (\theta)  \right ) \de \f x \leq c \quad\text{ for a.e. }t \in (0,T)\,.
\end{align*}
This implies due to the coercivity of $F$ (see Hypothesis~\ref{hypo}) the estimates 
\begin{align}
 \| \f u \|_{L^\infty(0,T;\f L^2(\Omega))} + \| \nabla \varphi \|_{L^\infty(0,T;\f L^2(\Omega))} + \esssup_{t\in(0,T)} \int_\Omega F(\varphi(t))\de \f x + \| Q_\delta (\theta)\|_{L^\infty(0,T;\f L^1(\Omega))}\leq C \,. \label{aprienergy}
\end{align}
The properties of $F$ let us conclude that additionally 
\begin{align*}
\| \varphi \|_{L^\infty(0,T;L^{p} (\Omega))}\leq c \quad \text{with } p <\infty \text{ for }d=2\text{ and } p =2d/(d-2)\text{ for }d\geq 3\,.
\end{align*}
\textbf{Entropy estimate.} Formally testing equation~\eqref{eq3} by $ 1/\theta$ leads to 
\begin{multline*}
\int_\Omega \left ( f'_\delta (\theta(t) ) - \varphi (t) \right ) \de \f x  +\int_0^t \int_\Omega \left ( \kappa(\theta ) | \nabla \ln \theta |^2 + \nu(\theta)  \frac{|(\nabla \f u)_{\sym}|^2 }{\theta }+ \frac{| \nabla \mu |^2 }{\theta }\right  )\de \f x \de s  =\\ \int_\Omega \left ( f'_\delta (\theta_0 ) - \varphi _0 \right ) \de \f x\,.
\end{multline*}
Note that the convection terms vanish due to the incompressibility of $\f u$.
For $\delta = 0$, we have to observe that $ \log \theta \leq \theta $ for $\theta \geq 1 $ in order to deduce again for all $\delta \in [0,1)$ that
\begin{align}
\| f'_\delta (\theta) \|_{L^\infty(0,T; L^1(\Omega))} + \| \nabla \hat{\kappa}(\theta) \|_{L^2(\Omega \times (0,T))} + \left \| \frac{\sqrt{\nu(\theta)}|\nabla \f u| }{\sqrt{\theta}} \right \|_{L^2(\Omega \times (0,T))} +  \left \| \frac{\nabla \mu  }{\sqrt{\theta}} \right \|_{L^2(\Omega \times (0,T))}  \leq c \,,\label{aprientro}
\end{align}
where Korn's inequality is used~\cite[Thm.~10.15]{singular}.
From Young's inequality, we observe for $\delta \leq \beta/2 $
\begin{align}
 \| \nabla f'_\delta (\theta)  \|_{L^2(\Omega \times (0,T))} \leq c  \| \nabla \hat{\kappa}(\theta) \|_{L^2(\Omega \times (0,T))} \label{nablafprime}
\leq c  \,.
 \end{align} 
\textbf{Additional estimates.}
Integrating~\eqref{eq2}$_1$ implies that the mass of $\varphi$ is conserved, \textit{i.e.}, $\int_\Omega \varphi (t) \de \f x = \int_\Omega \varphi(0)\de \f x $. 
Integrating~\eqref{eq2}$_2$, we find that $\int_\Omega \mu(t) \de \f x  = \int_\Omega (1/\varepsilon ) F'(\varphi(t)) - \theta(t) \de \f x \leq c $.
By Young's inequality, we may deduce that
\begin{align*}
\| \nabla \mu \|_{L^1(\Omega \times (0,T))} \leq \frac{1}{2} \int_\Omega \frac{|\nabla \mu|^2}{\theta} + \theta \de \f x \leq c \,,
\end{align*}
such that Poincar\'e's inequality implies 
\begin{align}
\| \mu\|_{L^{d/(d-1)}(\Omega \times (0,T))}\leq c \,.
\label{Nummer}
\end{align} 
\textbf{Estimate in the case $\delta =0$.}
Testing eqaution~\eqref{eq2}$_2$ by $\ln\theta$, we find
\begin{align}
\int_0^T\int_\Omega \theta \ln  \theta  \de \f x \de t \leq \| \nabla \varphi \|_{L^2(\Omega \times (0,T))} \| \nabla \ln \theta  \|_{L^2(\Omega \times (0,T))} + \int_0^T\int_\Omega  \frac 1\varepsilon F'(\varphi) \ln \theta  - \mu \ln \theta \de \f x \de t \,.\label{apri:theta}
\end{align}
Using the Legendre--Fenchel inequality for the convex conjugates $ y \mapsto e^y $ and $ x \mapsto x \ln x $, we estimate further 
\begin{multline*}
 \int_0^T \int_\Omega  F'(\varphi) \ln \theta  + \mu \ln \theta \de \f x \de t \\
 \leq c \left ( \| \theta \|_{L^1(\Omega\times (0,T))} + \int_0^T \int_\Omega \left ( \frac 1\varepsilon F'(\varphi) + \mu\right  ) \ln \left (\frac 1\varepsilon F'(\varphi) + \mu\right  ) \de \f x \de t\right )  \,,
\end{multline*}
where the right-hand side is bounded due to assumption~\eqref{growthF},~\eqref{aprienergy}, and~\eqref{Nummer}.
This implies a bound on $\theta \ln \theta $, because $\theta \ln \theta \geq - e^{-1}$ pointwise a.e.~in $\Omega \times (0,T)$. 
Applying Lemma~\ref{lem:lnest}, we may deduce the additional estimates
\begin{align}
\int_0^T\int_\Omega | \nabla \mu | \ln ^{1/2} (1+ | \nabla \mu| ) + \left |( \nabla \f u)_{\sym}\right  | \ln ^{1/2} \left (1+  \left |( \nabla \f u)_{\sym}\right  |\right ) \de \f x \de t \leq c \label{apri:diss}\,.
\end{align} 
 \textbf{Estimates for the time-derivative.}
 Comparison in equation~\eqref{eq1} implies by the above estimates that 
\begin{align*}
\int_0^T \| \t \f u \|_{ (\f W^{2,p}_{0,\sigma}(\Omega))^*)}  \ln^{1/2}\left  ( 1 +  \| \t \f u \|_{ (\f W^{2,p}_{0,\sigma}(\Omega))^*)} \right ) \de t \leq c \quad \text{for } p > d \,,
\end{align*} 
since $W^{2,p}(\Omega)$ is embedded into $\mathcal{C}^1(\overline  \Omega )$. 
Concerning the convection terms, we observe 
\begin{multline}
\| \f u \varphi \|_{L^2 (0,T; L^{d/(d-1)} (\Omega))} + \| \f u f'_\delta (\theta)  \|_{L^2 (0,T; L^{d/(d-1)} (\Omega))} \\
\leq  \| \f u \|_{L^\infty(0,T; \f H )} \left ( c\| \varphi \|_{L^\infty(0,T;L^{2d/d-2}(\Omega))} + \| f'_\delta(\theta)  \|_{L^2(0,T;L^{2d/d-2}(\Omega))} \right ) \,,\label{convterms}
\end{multline}
where the right-hand side is bounded due to~\eqref{aprienergy},~\eqref{nablafprime}, the embedding $H^1 \hookrightarrow L^{2d/(d-2)}$, and Poincar\'e's inequality.
By comparison in~\eqref{eq2}$_1$, one may find
\begin{align}
\int_0^T \| \t \varphi \|_{
(\f W^{1,p}(\Omega))^*} \ln^{1/2} \left ( 1+\| \t \varphi \|_{
(\f W^{1,p}(\Omega))^*} \right )\de t  \leq c \quad \text{for } p > d \,, \label{apri:phitime}
\end{align}
 since $W^{1,p}(\Omega)$ is embedded into $\mathcal{C}(\overline  \Omega )$. 
Similar, comparison in the entropy balance provides
\begin{align*}
\| \t f_\delta'(\theta) \|_{\mathcal{M}([0,T]; (\f W^{1,p}(\Omega))^*)} \leq c \quad \text{for } p > d  \,.
\end{align*}
\textbf{Additional regularity.}
 Finally, we observe that under the condition~\eqref{addreg}, the defect measure $m$ vanishes.
 Indeed, in this case, we infer from the entropy inequality~\eqref{aprientro} that $ \| \theta ^{\beta/2} \|_{L^2(W^{1,2})}\leq c$, which implies due to an embedding that  $\| \theta^\beta \| _{L^1 (L^{p})} \leq c$ for $p<\infty$ for $d =2$ and $p=d/(d-2)$ for $d\geq 3$. 
 From the energy estimate~\eqref{aprienergy}, we infer $\| \theta ^\delta \|_{L^\infty(L^1)}\leq c$. Interpolating between these two spaces under the assumption~\eqref{addreg} implies that
 \begin{align*}
\| \theta  \|_{L^q(\Omega\times (0,T))} \leq c \quad \text{for } q > \frac{d}{2}\,.
 \end{align*}
 Together with the entropy bound~\eqref{aprientro} this implies improved bounds on $\f u$ and $\mu$, \textit{i.e.}, 
 \begin{align*}
 \| \nabla \f u  \|_{L^p(\Omega \times (0,T))}+
 \| \nabla \mu  \|_{L^p(\Omega \times (0,T))} \leq \int_0^T\int_\Omega \left (\frac{\nu(\theta )|(\nabla \f u)_{\sym} |^2 }{\theta } + \frac{| \nabla \mu |^2}{\theta   }\right )  \de \f x \de t + \| \theta  \|_{L^q(\Omega\times (0,T))} \leq c
 \end{align*}
for $p> 2d/(d+2)$, where Korn's inequality is used~\cite[Thm.~10.15]{singular}.
An embedding together with Poincar\'e's inequality implies that 
\begin{align*}
\| \f u  \|_{L^r(\Omega \times (0,T))} + \| \mu  \|_{L^r(\Omega \times (0,T))}\quad \text{for }r>2 \,.
\end{align*} 
This in turn already implies that a hypothetical approximate sequence  $\{ \f u_n \otimes \f u_n\} $ is relative weakly compact in $L^1(\Omega \times (0,T); \R^{d\times d})$. 

Since $F$ is $\lambda$-convex, we may define $ G:= F+\lambda I$ such that $G$ is convex. 
To infer additional regularity for $\nabla \varphi $, we test equation~\eqref{eq2}$_2$ by $|G'(\varphi )|^{s-2}G'(\varphi )$ to infer that
\begin{multline*}
\int_0^T \int_\Omega (s-1) G''(\varphi  ) | \nabla \varphi |^2 | G'(\varphi )|^{s-2} + | G'(\varphi )|^s \de \f x \de t \\ 
= \int_0^T \int_\Omega \left ( \theta  + \mu  + \lambda \varphi   \right ) | G'(\varphi )|^{s-2} G'( \varphi  ) \de \f x \de t\qquad\qquad\qquad\qquad\qquad\qquad\quad \\
\leq  \frac{s-1}{s} \left \| G'(\varphi) \right \| _{L^s(\Omega \times (0,T))} ^s + \frac{1}{s} \left ( \| \theta   \| _{L^s(\Omega\times (0,T))}^s  + \| \mu  \| _{L^s(\Omega\times (0,T))}^s +\lambda^s\| \varphi    \| _{L^s(\Omega\times (0,T))}^s \right ) \,. 
\end{multline*}
For $s=\min\{r,q\}$, the first term on the right-hand side of the previous inequality may be absorbed into the left-hand side and the  other terms are bounded. 
Such that by comparison in~\eqref{eq2}$_2$, we observe $ \| \Delta \varphi  \|_{L^s (\Omega \times (0,T))} \leq c $ and by an embedding
we find 
\begin{align*}
\| \nabla \varphi   \|_{ L^p(\Omega \times (0,T))} \leq c \quad \text{for } p > \min\left \{ d, \frac{2d}{d-2}\right \} >2\,.
\end{align*}
This in turn already implies that a hypothetical approximate  sequence  $\{ \nabla\varphi_n   \otimes \nabla \varphi _n\} $ is relative weakly compact in $L^1(\Omega \times (0,T); \R^{d\times d})$. 
Note that in the case $d=3$ the relatively weakly compactness property of $\{ \nabla\varphi_n   \otimes \nabla \varphi _n\} $ in $L^1(\Omega \times (0,T); \R^{d\times d})$ could already be achieved by choosing $ \delta \geq 6/5-6/5\beta$ instead of~\eqref{addreg}.

 \begin{remark}
 To infer the estimates on the time derivatives of the solutions to the discrete system~\eqref{dis} rigorously, some stability properties of the $L^2(\Omega)$-Projection onto the Galerkin spaces are needed. 
 \end{remark}
 \subsection{Weak sequential compactness\label{sec:sequential}}
 Considering a hypothetical approximate sequence $\{ ( \f u_n , \theta _n , \varphi_n, \mu_n )\}$, we are going to prove the weak sequential compactness of the measure-valued formulation~\eqref{def:weak}. As an approximate sequence the solutions to~\eqref{dis} could be chosen. 
 Collecting the bounds from the previous section, we observe
\begin{align}
\f u_n  &
 \stackrel{*}
 {\rightharpoonup} \f u \quad \text{in } L^\infty ( 0,T ; \f H )\cap L^1(0,T;\f W^{1,1}(\Omega))\label{convun}\,,
\\\t \f u_n  &
 {\rightharpoonup} \t \f u \quad \text{in }L^{1} (0,T; (\f W^{2,p}_{0,\sigma} (\Omega))^*)\,,
\\
 \varphi_n  &
 \stackrel{*}
 {\rightharpoonup} \varphi  \quad \text{in } L^\infty(0,T; \f H^1(\Omega)) \cap L^1(0,T; W^{2,1}(\Omega))\,,
\\ \t \varphi_n  &
 {\rightharpoonup} \t \varphi  \quad \text{in }  L^1(0,T; (\f W^{1,p}(\Omega))^*)\,,\label{conv:phitime}
\\
\hat{\kappa}(\theta_n) & {\rightharpoonup}  \eta \quad \text{in } L^2 ( 0,T; H^1(\Omega))
  \,,\label{w:log}
\\
f' (\theta_n )   &\stackrel{*}{\rightharpoonup}  \zeta \quad \text{in } L^2 ( 0,T; H^1(\Omega))\cap 
L^\infty(0,T;\mathcal{M}(\ov\Omega)) \,,
\\
\t f' (\theta_n)  &\stackrel{*}{\rightharpoonup} \t f' (\theta) \quad \text{in } \mathcal{M}([0,T];(\f W^{1,p}(\Omega))^*) \text{ for }p>d\,,
\\
\mu_n & {\rightharpoonup} \mu \quad \text{in } L^1(0,T;W^{1,1}(\Omega)) \,.\label{conv:mu}
\intertext{The Lions--Aubin lemma (see~\cite[Cor.~7.9]{roubicek}) grants that }
\f u_n  & \ra \f u  \quad \text{in } L^p(0,T;L^q
(\Omega)) \text{ for all }p\in [1,\infty)\text{ and }q \in [1,2)\,,\label{strongu}
\\
\varphi_n  & \ra \varphi  \quad \text{in }L^p(0,T;L^r
(\Omega)) \text{ for all }p\in [1,\infty) \text{ and }r \in [1,2d/(d-2)) \,,\label{strongphi}
\\
\nabla \varphi_n  & \ra \nabla \varphi  \quad \text{in }L^1(0,T;L^q
(\Omega)) \text{ for all }p\in [1,\infty) \text{ and }q \in [1,2)\,,\label{strongnablaphi}
\\
  f'(\theta_n) &\ra  \xi    \quad \text{in } L^2(0,T;L^2(\Omega) )\,. \label{psithetastrong}
\end{align}

Due to~\eqref{psithetastrong}, we can extract a subsequence that converges a.e. in $\Omega \times (0,T)$, \textit{i.e.}, $ f'_\delta(\theta_n) \ra \xi $ a.e.~in $\Omega \times (0,T)$. Since $f'_\delta$ is a bijective function, we define $\theta = (f'_\delta)^{-1} (\xi )$  and observe that $\theta_n\ra \theta $ a.e.~in $\Omega \times (0,T)$. 
Vitali's theorem  together with~\eqref{apri:theta} implies
\begin{align}
  \theta_n &\ra \theta   \quad \text{in } L^1(0,T;L^1(\Omega) )\,. \label{thetastrong}
\end{align}
The continuity of $f'_\delta$ and $Q_\delta$ implies that $ f'_\delta ( \theta_n) \ra f'_\delta (\theta) $ and $ Q_\delta ( \theta _n) \ra Q_\delta ( \theta) $ a.e.~in $\Omega \times (0,T)$. 

Since the time-derivative of the sequences $\{ \f u_n\}$ and $\{\varphi_n\}$  converge weakly in $L^1(0,T;(W^{2,p}(\Omega))^*)$ and $L^1(0,T;(W^{1,p}(\Omega))^*)$ for $p>d$, respectively, we deduce that
$\f u_n \ra \f u $ in $\mathcal{C}_w ([0,T]; (W^{2,p}(\Omega))^*)$ and $\varphi \ra \varphi $ in $\mathcal{C}_w ([0,T]; (W^{1,p}(\Omega))^*)$, which implies by a standard lemma (see~\cite[page~297]{magnes}) that
\begin{align*}
\f u_n  & \ra \f u  \quad \text{in } \mathcal{C}_w([0,T];\f L^2(\Omega)) \quad\text{and} \quad \varphi_n    \ra \varphi  \quad \text{in } \mathcal{C}_w([0,T];H^1(\Omega)) \,.
\end{align*}
The energy bounds allow to deduce the existence of a measure $ m \in L^\infty(0,T; \mathcal{M}(\ov \Omega; \R^{d\times d})$ (see~\cite{alibert,RoubicekMeasure}, or compare~\cite{meas}) such  that
\begin{align}
\f u _n \otimes \f u_n +\varepsilon \nabla \varphi_n \otimes \nabla \varphi _n  &\stackrel{*}{\rightharpoonup} \f u \otimes \f u + \varepsilon\nabla \varphi \otimes \nabla \varphi + m \quad \text{in }  L^\infty (0,T; \mathcal{M}(\ov \Omega; \R^{d\times d})\,.\label{convmeasure}
\end{align}
By the lower semi-continuity of weak convergence, we may observe that $ m$ is a semi-positive matrix, \textit{i.e.}, for any $ \f a \in \C^\infty _c (\ov \Omega \times [0,T]; \R^d)$, we find
\begin{align*}
\left \langle m ; \f a \otimes \f a \right \rangle ={}& \lim_{n \ra \infty}\int_\Omega \left ( \left ( \f u_n \otimes \f u_n + \varepsilon \nabla \varphi_n \otimes \nabla \varphi_n \right )  - \left (  \f u \otimes \f u + \varepsilon \nabla \varphi \otimes \nabla \varphi \right )\right ) : \left ( \f a \otimes \f a \right )\de \f x \\  \geq {}& \liminf_{n \ra \infty} \int_\Omega 
\left ( \left ( \f u _n \cdot \f a \right )^2  + \varepsilon \left ( \nabla \varphi_n \cdot \f a \right ) ^2 \right ) - \left ( \left ( \f u  \cdot \f a \right )^2  + \varepsilon \left ( \nabla \varphi \cdot \f a \right ) ^2 \right )\de \f x \geq 0 \,.
\end{align*}
 The pointwise strong convergence implied by~\eqref{strongu} and~\eqref{strongnablaphi} allow to deduce that $m$ is indeed a defect measure, \textit{i.e.}, the Lebesgue-part is zero. 
 %
%
 The estimate~\eqref{convterms} implies that the convective terms in~\eqref{phaseeq} and~\eqref{entropy} are relatively weakly compact in $L^1(\Omega \times (0,T))$. 
 
 With these different convergences at hand, it is a standard matter to pass to the limit in the formulation~\eqref{weakmomentum}. 
The convergences also allow to pass to the limit in the formulation~\eqref{mueq}. Indeed, Hypothesis~\ref{hypo} allows to infer that $ \{ F'( \varphi_n)\}$ is relatively weakly compact, by the a.e.~convergence of $ \{ \varphi_n\}$, the continuity of $F'$, and Vitali's theorem, we find
\begin{align*}
F'(\varphi_n ) \ra F'(\varphi) \quad  \text{in } L^1 ( 0,T ; L^1(\Omega))\,,
\end{align*}
which allows together with~\eqref{thetastrong} to pass to the limit in~\eqref{mueq}. 
From~\eqref{convun} and~\eqref{strongeqphi}, we may deduce
\begin{align*}
\f u_n \varphi_n \rightharpoonup  \f u \varphi \quad \text{in } L^{d/(d-1)} ( \Omega \times (0,T))\,.
\end{align*}
This, together with~\eqref{conv:phitime} and~\eqref{conv:mu} allow to pass to the limit in~\eqref{phaseeq}. 
%
%
%
The convergence in the energy inequality is observed by~\eqref{convmeasure} multiplied by the identity, the a.e.-convergence of $\{\varphi_n\} $ and $\{ \theta_n\}$, the continuity and coercivity of $F$ and $Q_\delta $, as well as Fatou's lemma~\cite{fatou}. 

Passing to the limit on the right-hand side of~\eqref{entropy} is straightforward, where estimate~\eqref{convterms} guarantees that the convective term is relatively weakly compact in $L^1(\Omega \times (0,T))$. For the left-hand side, we observe that the dissipative terms under the time-intergral converge due to the lower semi-continuity of convex functions (see Ioffe~\cite{ioffe} or~ Thm.~2 in \cite{giulio}). For the entropic part, which is the first term in~\eqref{entropy}, the convergence for $\varphi$ is obvious, since $\varphi_n \ra \varphi$ in $\mathcal{C}_w([0,T];H^1(\Omega))$. 
For $\delta>0$,  $ \{ f'_\delta(\theta_n)\}$ is relatively weakly compact in $L^1(\Omega\times (0,T))$ due to the energy estimate~\eqref{aprienergy}, which together with the a.e.-point-wise convergence of $ \{ \theta_n \}$ 
and Vitali's theorem implies the  convergence of $\left \{\int_\Omega f'_\delta (\theta_n ) \de \f x \right \} $ to $ \int_\Omega f'_\delta(\theta) \de \f x $  for a.e.~$t\in (0,T)$. 
In the case $\delta=0$, we may argue similar on the set $\theta \geq 1$. For $\theta<1$ we may argue by the positivity of $- \ln \theta $  and Fatou's lemma see~\cite{indu} or~\cite{larosc}. 
 
 In case that~\eqref{addreg} holds, the~\textbf{additional regularity} holds, the sequences $ \{ \f u _n \otimes \f u_n +\varepsilon \nabla \varphi_n \otimes \nabla \varphi _n  \} $ is relatively weakly compact in $L^1(\Omega \times (0,T))$ such that $m$ is vanishing~\cite[Lem.~3.2.14]{RoubicekMeasure}. 
 

\section{Relative energy inequality\label{sec:rel}}
This section is devoted to the proof of the relative energy inequality. For convenience we set $\varepsilon=1$ in this section. All calculations may be adapted to varying $\varepsilon$ easily. 

\begin{proposition}\label{prop:rel}
Let $( \f u, \theta , \varphi )$ together with $\mu \in L^2(0,T;W^{1,1}(\Omega))$ and $m\in L^\infty(0,T;\mathcal{M}( \ov \Omega ; \R^{d\times d} _{\sym,+}))$ be a measure-valued solution according to Definition~\ref{def:weak} and let $(\tu,  \tet,\tp)\in \mathbb{Y}$ be a regular weak  solution according to Definition~\ref{def:weak}. 
Then the relative energy inequality
\begin{multline}
\mathcal{R}(\f u, \theta , \varphi  |\tu,  \tet,\tp) (t) + \frac{1}{2}\langle m , I \rangle + \int_0^t \mathcal{W} (\f u, \theta , \varphi  |\tu,  \tet,\tp)  e^{\int_s^t  \mathcal{K} (\tu,  \tet,\tp) \de \tau } \de s 
\\
\leq 
\mathcal{R}(\f u, \theta , \varphi  |\tu,  \tet,\tp) (0) e^{\int_0^t  \mathcal{K} (\tu,  \tet,\tp) \de s }
  \,,\label{relenenergyweak}
\end{multline}
holds for a.e.~$t\in (0,T)$ and thus the assertion.

\end{proposition}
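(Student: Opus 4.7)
The plan is to prove~\eqref{relenenergyweak} by a Dafermos--Germain-type relative-energy argument: write
$\mathcal{R}(\f q|\tilde{\f q}) = \mathcal{E}(\f q)-\mathcal{E}(\tilde{\f q})-\langle D\mathcal{E}(\tilde{\f q}),\f q-\tilde{\f q}\rangle + \frac{M}{2}\|\varphi-\tp\|_{(W^{1,\infty}(\Omega))^*}^2$, and control each piece by combining the variational relations for $\f q$ (energy inequality, entropy inequality, weak momentum and Cahn--Hilliard formulations) with the pointwise equations for $\tilde{\f q}$.

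Concretely, I would first apply the energy inequality~\eqref{energyin} to bound $\mathcal{E}(\f q)(t)+\frac12\langle m,I\rangle$ by $\mathcal{E}(\f q)(0)$. Since $\tilde{\f q}\in \mathbb{Y}$ solves the system in a strong sense, the energy balance for $\tilde{\f q}$ holds with equality and contributes the dissipation of the strong solution. The remaining cross terms are obtained as follows: the quadratic mixed term $\int_\Omega \f u\cdot\tu\,\de\f x$ is handled by using~\eqref{weakmomentum} with test function $\tu$ (admissible because $\tu\in \mathcal{C}^1([0,T];\f V)$), which produces a boundary-time term, convective terms, viscous dissipation, and a measure contribution $\langle m,(\nabla\tu)_{\sym}\rangle$; the mixed gradient term $\int_\Omega \nabla\varphi\cdot\nabla\tp\,\de\f x$ and the potential cross term $\int_\Omega G'(\tp)(\varphi-\tp)\,\de\f x$ are obtained by testing~\eqref{phaseeq}--\eqref{mueq} with $\tilde\mu$ and $\partial_t\tp+(\tu\cdot\nabla)\tp$; and the entropic cross term involving $\int_\Omega (f'_\delta(\theta)-\varphi)\tet\,\de\f x$ is produced by choosing $\vartheta=\tet$ in the entropy inequality~\eqref{entropy}. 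The low-regularity $(W^{1,\infty})^*$-term is treated separately using the Cahn--Hilliard equation tested by the solution of a suitable elliptic auxiliary problem, producing the term $M\|\varphi-\tp\|_{(W^{1,\infty})^*}\|\mathcal{A}_3(\tilde{\f q})\|_{(W^{1,\infty})^*}$ which vanishes for a strong solution.

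Collecting all contributions, the dissipations of $\f q$ and $\tilde{\f q}$ and the cross dissipations (coming from testing the entropy inequality with $\tet$ and the Cahn--Hilliard relation with $\tilde\mu$) can be rearranged by completing squares weighted by $\sqrt{\tet/\theta}$ into the relative dissipation $\mathcal{W}$ in~\eqref{W}; the $\kappa_0$ and $\kappa_1$ parts are grouped separately and combined with $\nabla\hat\kappa(\theta)$ using $\hat\kappa'(r)=\kappa(r)/r$. The ``error'' terms left over are differences of material derivatives of $\tilde{\f q}$ paired with quantities such as $\f u-\tu$, $\theta-\tet$, $f'_\delta(\theta)-f'_\delta(\tet)$, $\theta^{\beta/2}-\tet^{\beta/2}$, $\varphi-\tp$ and $\nabla\varphi-\nabla\tp$. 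Each of these is estimated via the auxiliary lemmata: Lemma~\ref{lem:relpos} converts the second and third into the pointwise bound by $\Lambda_\delta(\theta|\tet)$; Lemma~\ref{lem:log} and Lemma~\ref{lem:diff} upgrade these to $L^1$-estimates by $\int_\Omega\Lambda_\delta(\theta|\tet)\,\de\f x\le \mathcal{R}$; the $\lambda$-convex excess of $F$ is absorbed by the $M$-term as already shown in the derivation of~\eqref{relen}; and Lemma~\ref{lem:fenchel} is used whenever a product $g\theta/\tet$ with a factor coming from $\nabla\tilde\mu$ or $(\nabla\tu)_{\sym}$ needs to be split between $\mathcal{W}$ and $\mathcal{R}$. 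The multipliers of $\mathcal{R}$ that emerge are exactly those assembled into $\mathcal{K}(\tilde{\f q})$, while the $\mathcal{W}$-contributions are kept on the good side.

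The main obstacle is a bookkeeping one: keeping every mixed product strictly dominated by $\mathcal{K}(\tilde{\f q})\mathcal{R}+\frac12\mathcal{W}$, which requires the precise balance between the scalings $\sqrt{\theta/\tet}$ and $\sqrt{\tet/\theta}$ (through Lemma~\ref{lem:fenchel}) and the fact that the entropy inequality is one-sided, which forces the sign of the $\tet$-test and the direction of certain rearrangements. Once the differential inequality $\frac{d}{dt}\mathcal{R}+\mathcal{W}\le \mathcal{K}(\tilde{\f q})\,\mathcal{R}$ is obtained in integrated form, Gronwall's lemma, applied in the integrating-factor form against $\exp(\int_s^t\mathcal{K}\,\de\tau)$, yields~\eqref{relenenergyweak}, completing the proof.
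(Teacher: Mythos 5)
Your proposal is correct and follows essentially the same route as the paper's proof: the same choice of cross tests (momentum balance against $\tu$, entropy inequality with $\vartheta=\tet$, the strong entropy balance against $\theta$, the Cahn--Hilliard relations against $\tilde\mu$ and the material derivative of $\tp$), the same completion of squares weighted by $\sqrt{\tet/\theta}$ and $\sqrt{\theta/\tet}$ with separate treatment of the $\kappa_0$ and $\kappa_1$ parts, the same use of Lemmata~\ref{lem:relpos}--\ref{lem:fenchel} to dominate the error terms by $\mathcal{K}(\tilde{\f q})\mathcal{R}$, and the concluding Gronwall argument. The only cosmetic difference is your Bregman-type shorthand for $\mathcal{R}$, which the paper instead realizes by regrouping the relative energy into the two energies plus the cross terms $-\int_\Omega(\f u\cdot\tu-\tet(f_\delta'(\theta)-\varphi))\,\de\f x$; the substance is identical.
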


\subsection{Relative energy} 

The following calculation hold for a.e.~$t\in (0,T)$. 
Regrouping of the appearing terms in~\eqref{relen} gives
\begin{subequations}\label{calcrelen}
\begin{align}
\mathcal{R}(\f u , \theta , \varphi  | \tu,\tet,\tp) + \frac{1}{2}\langle  m , I \rangle={}& \int_{\Omega} \left (\frac{1}{2}| \f u |^2 +  \frac{1}{2}| \nabla \varphi|^2 + F (\varphi) +Q_\delta(\theta) \notag
 \right )\de \f x + \frac{1}{2}\langle  m , I \rangle \\ &  +\int_{\Omega} \left ( \frac{1}{2}|\tu|^2 + \frac{1}{2}| \nabla \tp|^2 + F (\tp) +Q_\delta(\tet) 
  \right )\de \f  x\notag \\
&-  \int_{\Omega} \left ( \f u \cdot \tu - \tet ( f'_\delta( \theta) - \varphi )  \right ) \de\f x+ \frac{M}{2} \| \varphi - \tp \|_{W^{-1,\infty}(\Omega)} ^2 \notag
 \\ 
& -  \int_{\Omega} \left (  \nabla \tp \cdot  \nabla \varphi  + (  F' ( \tp ) (  \varphi-\tp  )+ 2 F(\tp) )  \right ) \de \f x \label{phaseterms}
\\
&- \int_{\Omega} \left ( f_\delta(\tet) + Q_\delta(\tet) - \tet \varphi - {\lambda}| \varphi - \tp |^2    \right ) \de \f x \,. \label{mixedterms}
\end{align}
\end{subequations}
First, we observe by the energy inequality~\eqref{energyin} for the weak solution and the energy equality (\eqref{energyin} with equality) for the strong solution that
\begin{multline}
 \int_{\Omega} \left (\frac{1}{2}| \f u(t)|^2+  \frac{1}{2}| \nabla \varphi(t)|^2 + F (\varphi(t)) +Q_\delta(\theta(t)) 
    \right ) \de \f x
     + \frac{1}{2}\langle  m , I \rangle \\+\int_{\Omega} \left ( \frac{1}{2}| \tu(t)|^2+ \frac{1}{2}| \nabla \tp(t)|^2 + F (\tp(t)) + Q_\delta(\tet(t)) 
      \right ) \de \f x \\
\leq   \int_{\Omega} \left (\frac{1}{2}| \f u_0 |^2 +  \frac{1}{2}| \nabla \varphi_0|^2 + F (\varphi_0) + Q_\delta (\theta _0)\right ) + \int_{\Omega} \left (\frac{1}{2} | \tu_0|^2+ \frac{1}{2}| \nabla \tp_0|^2 + F (\tp_0) + Q_\delta (\tet_0 )\right )\,. \label{energyestimates}
\end{multline}
%
Testing the weak form of the momentum balance with $ \tu$, \textit{i.e.,} choosing $\f \xi = \tu$ in~\eqref{weakmomentum}, we find
\begin{multline*}
-  \int_\Omega \f u \cdot \tu \de \f x \Big |_0^t + \int_0^t \int_\Omega \f u \t \tu + ( \f u \otimes \f u ) : \nabla \tu  \de \f x \de s\\ = \int_0^t \int_\Omega \nu (\theta ) ( \nabla \f u )_{\sym} : (\nabla \tu )_{\sym} - (\nabla \varphi \otimes \nabla \varphi ) : (\nabla \tu )_{\sym} \de \f x  - \int_{\overline\Omega}( \nabla \tu )_{\sym} : m(\de \f x)  \de s\,.
\end{multline*}
Inserting additionally the strong formulation of the momentum balance~\eqref{eq1} for $\tu$ tested with $\f u$, we observe
\begin{align}
\begin{split}
-  \int_\Omega \f u \cdot \tu \de \f x \Big |_0^t -{}& \int_0^t \int_\Omega( \nu (\theta )+ \nu(\tet)) ( \nabla \f u )_{\sym} : (\nabla \tu )_{\sym}   \de \f x \de t\\ ={}&-  \int_0^t \int_\Omega ( \f u \otimes \f u ) : \nabla \tu + ( \tu \otimes \tu) : \nabla \f u   \de \f x + \int_{\overline\Omega}( \nabla \tu )_{\sym} : m(\de \f x)  \de s
\\
&-\int_0^t \int_\Omega (\nabla \varphi \otimes \nabla \varphi ) : (\nabla \tu )_{\sym} + ( \nabla \tp \otimes \nabla \tp) : ( \nabla \f u ) _{ \sym}   \de \f x \de s
\,.\end{split}
\label{momentumtest}
\end{align}
Choosing $ \vartheta = \tet$ in~\eqref{entropy}, one may observe
\begin{multline}
  \int_{\Omega} \tet  (f'_\delta  (  \theta) - \varphi) \de \f x  \Big|_0^t + \int_0^t \int_{\Omega} \tet  \left ( \kappa (\theta)  | \nabla  \log \theta|^2+ \frac{\nu(\theta) | (\nabla \f u_{\sym}|  ^2}{\theta }+ \frac{|\nabla  \mu |^2 }{\theta}  \right ) \de \f x  \de s\\
  \leq \int_0^t \int_\Omega\left (  \kappa (\theta)    \nabla \log \theta \cdot \nabla \tet + ( \t \tet  + (\f u \cdot \nabla ) \tet) (f_\delta' ( \theta) - \varphi )\right ) \de \f x  \de s \,.\label{entrotest}
\end{multline}
Similar, we find by testing the energy balance~\eqref{eq1} for the strong solution with $\theta$ that 
\begin{multline}
 \int_0^t \int_{\Omega} \theta  \left ( \kappa (\tet)  | \nabla  \log \tet|^2+ \di ( \kappa(\tet) \nabla \log \tet) + \frac{\nu(\tet) |( \nabla \tu)_{\sym} |  ^2}{\tet }+ \frac{|\nabla  \tilde{\mu} |^2 }{\tet}  \right ) \de \f x  \de s\\
  = -\int_0^t \int_\Omega  \theta( \t   + (\tu \cdot \nabla ) ) ( f_\delta' ( \tet) - \tp ) \de \f x  \de s \,.\label{entrotestzwei}
\end{multline}
For the terms in line~\eqref{phaseterms}, we find with the fundamental theorem of calculus that
\begin{multline*}
- \int_{\Omega} \left (   \nabla \tp \cdot  \nabla \varphi  + (  F' ( \tp ) (  \varphi-\tp  )+ 2 F(\tp) ) \right ) \de\f x \Big|_0^t
\\
= - \int_0^T\int_{\Omega}  \nabla \t \tp \cdot  \nabla \varphi \de \f x - \langle  \t \varphi, \Delta \tp   \rangle\de s\qquad\qquad
\\- \int_0^T \langle  \t \varphi-\t \tp ,  F' ( \tp )\rangle  +\int_{\Omega} \left ( F'' (\tp)\t \tp (\varphi - \tp) + 2 F'(\tp) \t \tp  \right ) \de\f x \de s \,,
\end{multline*}
where this formula first only holds for more regular functions, but can be extended by  density arguments.
Choosing $\phi =- \tilde{\mu} $~\eqref{phaseeq},  $ \zeta=\t \tp$ in~\eqref{mueq}, and adding both equations provides
\begin{align*}
-\int_0^t\int_\Omega & \nabla \varphi \cdot \nabla \t \tp +{} \nabla \mu \cdot \nabla \tilde{\mu }\de \f x \de t 
\\={} &
\int_0^t \langle  \t \varphi, \tmu \rangle - \int_\Omega ( \f u  \varphi )\cdot \nabla \tilde{\mu} + \theta \t \tp + \mu \t \tp  - F'(\varphi) \t \tp  \de \f x \de s \\
 ={}& \int_0^t  \langle  \t \varphi, \tmu \rangle - \int_\Omega   ( \f u  \varphi)\cdot \nabla   F'(\tp)  -  ( \f u  \varphi)\cdot \nabla\Delta \tp - ( \f u  \varphi)\cdot \nabla \tet  + \theta \t \tp + \mu \t \tp - F'(\varphi) \t \tp  \de \f x \de s \,.
\end{align*}
Similar calculations for the strong solution, \textit{i.e.}, testing~\eqref{eq2}$_1$ with $\mu$ and inserting equation~\eqref{eq2}$_2$ twice, we find
\begin{multline}
\int_0^t   \langle     \t \varphi,\Delta \tp\rangle  - \langle \t \varphi, F'(\tp) \rangle  -\int_\Omega \nabla \mu \cdot \nabla \tilde{\mu }\de \f x \de s\\
 ={} \int_0^t \int_\Omega  \t \tp \mu +   \nabla(( \tu \cdot \nabla ) \tp)\cdot  \nabla \varphi  +( \tu \cdot \nabla ) \tp F'(\varphi) - ( \tu \cdot \nabla ) \tp \theta \de \f x-\langle  \t \varphi , \tet +  \tmu\rangle    \de s \,.\label{strongeqphi}
\end{multline}
Combining the last three equations, we arrive at
\begin{align}
\begin{split}
- \int_{\Omega}& \left (   \nabla \tp \cdot  \nabla \varphi  + (  F' ( \tp ) (  \varphi-\tp  )+ 2 F(\tp) ) \right ) \de\f x \Big|_0^t  - 2 \int_0^t \int_\Omega \nabla \mu \cdot \nabla \tilde{\mu} \de \f x \de s\\
={}&- \int_0^t \int_\Omega(   \f u  \varphi )\cdot \nabla F'(\tp)-  ( \f u  \varphi )\cdot \nabla\Delta \tp  -  ( \f u  \varphi )\cdot \nabla\tet+ \theta \t \tp     \de \f x \de s
\\
&+  \int_0^t \int_\Omega \nabla(( \tu \cdot \nabla ) \tp)\cdot  \nabla \varphi +( \tu \cdot \nabla ) \tp F'(\varphi) - ( \tu \cdot \nabla ) \tp \theta\de \f x - \langle  \t \varphi , \tet \rangle    \de s
\\
&+ \int_0^t \int_\Omega\t \tp \left (  F'(\varphi) - F'(\tp) - F'' (\tp) (\varphi - \tp)\right ) \de \f x \de s \,.
\end{split}\label{phaseinserted}
\end{align}
Applying now the fundamental theorem of calculus to the terms in line~\eqref{mixedterms}, we find
\begin{align}
\begin{split}
- \int_{\Omega} &\left (
2 f_\delta (\tet) - \tet f_\delta' (\tet) - \tet \varphi   \right ) \de \f x\Big|_0^t \\
={}& -\int_0^t \int_\Omega 2f'_\delta (\tet) \t \tet - \t \tet f'_\delta (\tet) - \tet f''_\delta (\tet) \t\tet - \t\tet \varphi  \de \f x  -\langle   \t \varphi,\tet \rangle   \de s 
\\={}& -\int_0^t \int_\Omega   \t \tet( f'_\delta (\tet) - \tet f''_\delta (\tet) ) - \t\tet \varphi \de \f x  -\langle   \t \varphi,\tet \rangle  \de s  \,.
\end{split}\label{integrated}
\end{align} 
Inserting now~\eqref{energyestimates},~\eqref{momentumtest},~\eqref{entrotest}~\eqref{entrotestzwei},~\eqref{phaseinserted}, and~\eqref{integrated} into~\eqref{calcrelen}, we observe
\begin{align}
\begin{split}
\mathcal{R}(\f u(t), \theta (t), \varphi (t) &|{}\tu(t),  \tet(t),\tp(t))  
 \\+ \int_0^t \int_\Omega& \left (  
\tet \frac{\nu(\theta)|(\nabla \f u)_{\sym} |^2 }{\theta} + \theta \frac{\nu(\tet) |( \nabla \tu)_{\sym}|^2}{\tet} - ( \nu(\theta ) + \nu(\tet) ) (\nabla \f u)_{\sym} : (\nabla \tu)_{\sym} 
    \right ) \de \f x  \de s \\
    + \int_0^t \int_\Omega& \left (  \tet\frac{| \nabla \mu|^2}{\theta}  + \theta \frac{| \nabla \tilde{\mu }|^2}{\tet} - 2 \nabla \mu \cdot \nabla \tilde{\mu}   \right )\de \f x  \de s \\
+ \int_0^t \int_\Omega& \left (   \tet  \kappa(\theta)    {| \nabla \log\theta|^2 } -    \kappa(\theta) {\nabla \log\theta }\cdot \nabla \tet +  \theta  \kappa(\tet)  {| \nabla\log \tet|^2} +    {\di (\kappa(\tet) \log \tet ) }   \theta   \right )\de \f x  \de s \\
\leq{}& \mathcal{R }(\f u_0 , \theta_0 , \varphi _0 | \tu_0 , \tet_0,\tp_0 ) + \int_0^t \int_\Omega\t \tp \left (  F'(\varphi) - F'(\tp) - F'' (\tp) (\varphi - \tp)\right ) \de \f x \de s\\
& -  \int_0^t \int_\Omega ( \f u \otimes \f u ) : \nabla \tu + ( \tu \otimes \tu) : \nabla \f u \de \f x \  + \int_{\overline\Omega}( \nabla \tu )_{\sym} : m_t(\de \f x)  \de s   
\\
&-\int_0^t \int_\Omega (\nabla \varphi \otimes \nabla \varphi ) : (\nabla \tu )_{\sym} + ( \nabla \tp \otimes \nabla \tp) : ( \nabla \f u ) _{ \sym}   \de \f x \de s\\
&+\int_0^t \int_\Omega  ( \t \tet  + (\f u \cdot \nabla ) \tet) (f_\delta'( \theta) - \varphi )  -\left ( \theta( \t   + (\tu \cdot \nabla ) ) (f'_\delta ( \tet) - \tp )\right ) \de \f x  \de s \\
&-  \int_0^t \int_\Omega  ( \f u  \varphi )\cdot \nabla F'(\tp) - ( \f u  \varphi )\cdot \nabla \Delta \tp  -( \f u  \varphi )\cdot \nabla\tet  +\theta \t \tp \de \f x \de s
\\
&+  \int_0^t \int_\Omega  \nabla(( \tu \cdot \nabla ) \tp)\cdot  \nabla \varphi +( \tu \cdot \nabla ) \tp F'(\varphi) - ( \tu \cdot \nabla ) \tp \theta \de \f x -\langle  \t \varphi , \tet \rangle   \de s 
\\
& -\int_0^t \int_\Omega   \t \tet( f'_\delta (\tet) - \tet f''_\delta (\tet) ) - \t\tet \varphi \de \f x - \langle  \t \varphi , \tet \rangle \de s 
+ \frac{M}{2} \| \varphi - \tp \|_{W^{-1,\infty}(\Omega)} ^2 
\,.
 \end{split}\label{firstest}
\end{align}
For the term due to the convection in the fluid, we find 
\begin{align*}
-\int_\Omega& ( \f u \otimes \f u ) : \nabla \tu + ( \tu \otimes \tu) : \nabla \f u   \de \f x
\\
={}& -\int_\Omega (  (\f u-\tu)\otimes \f u ) : \nabla \tu + (  (\tu -\f u)\otimes \tu ) : \nabla \f u   \de \f x  \\
={}&- \int_\Omega ( (\f u-\tu) \otimes (\f u-\tu) ) : \nabla \tu + (  (\tu -\f u)\otimes \tu ) :(  \nabla \f u - \nabla \tu)    \de \f x   \\
={}& -\int_\Omega ( (\f u-\tu) \otimes (\f u-\tu) ) : (\nabla \tu )_{\sym}    \de \f x \\
\leq{}&\left \| (\nabla \tu )_{\sym,-} \right \|_{L^\infty(\Omega)}  \| \f u -\tu\|_{L^2(\Omega)}^2 \,,
\end{align*}
where $ ( \nabla \tu )_{\sym,-}$ denotes the negative part of this symmetric matrix (see Section~\ref{sec:not}).
The first equality follows from the fact that $\f u$ and $\tu$ are divergence free, such that
$ \f u \cdot \nabla | \tu|^2 $ and $\tu \cdot \nabla | \f u|^2$ integrated over $\Omega$ vanish. 
The second equality  is just a rearrangement and the third follows again from the fact that $\tu$ is a solenoidal vector field. 
Concerning the defect measure $m$, we may estimate
\begin{align*}
 \int_{\overline\Omega}( \nabla \tu )_{\sym} : m(\de \f x)  \leq \| ( \nabla \tu )_{\sym,-} \|_{L^\infty(\Omega)} \left \langle m ,1 \right \rangle \,,
\end{align*}
 since the positive part may be estimated by zero due to the semi-positiveness of the matrix $m (\de \f x)$. 

For the coupling terms of the fluid and the phase-field equation, we observe
\begin{align*}
\int_\Omega& \nabla\left (  ( \tu\cdot \nabla )\tp\right ) \cdot \nabla \varphi +  ( \f u\varphi ) \cdot \nabla  \Delta \tp \de \f x \\
-&\int_\Omega ( \nabla \varphi \otimes \nabla \varphi ): ( \nabla \tu)_{\sym} +( \nabla \tp \otimes \nabla \tp):( \nabla \f u )_{\sym}\de \f x \\
= {}& \int_\Omega  \nabla \tu : \left ( \nabla \tp \otimes \nabla \varphi \right ) + ( \tu \cdot \nabla ) \nabla \tp \cdot \nabla \varphi +  ( \f u  \varphi )\cdot \nabla  \Delta \tp \de \f x 
\\ & - \int_\Omega \nabla \tu : \left ( \nabla \varphi \otimes \nabla \varphi \right ) - ( \f u \cdot \nabla ) \tp \Delta \tp \de \f x 
\\={}& \int_\Omega \nabla \tu : \left ( \left ( \nabla \tp - \nabla \varphi \right ) \otimes \nabla \varphi \right ) + ( \tu \cdot \nabla ) \nabla \tp \cdot \nabla \varphi  + ( \f u ( \varphi - \tp) ) \cdot \nabla \Delta \tp \de \f x 
\\={}& -\int_\Omega \nabla \tu : \left ( \left ( \nabla \varphi - \nabla \tp \right ) \otimes \left ( \nabla \varphi - \nabla \tp \right )\right ) +\left  (\left ( \f u - \tu\right  ) \left  ( \varphi - \tp \right )\right )\cdot \nabla \Delta \tp \de \f x 
\\& + \int_\Omega \nabla \tu : \left ( \left ( \nabla \tp - \nabla \varphi\right ) \otimes \nabla \tp \right ) + ( \tu \cdot \nabla ) \nabla \tp \cdot \left ( \nabla \varphi - \nabla \tp  \right ) + ( \tu  ( \varphi - \tp ))\cdot \nabla ) \Delta \tp \de \f x 
%
%
%
\,.
\end{align*}
The first equality in the above equality chain follows from the product rule, a rearrangement, and an integration-by-parts on the  last term. The second and third equality are just rearrangements, while once using that   $ \tu$ is divergence free, such that the integral over $ ( \tu \cdot \nabla ) | \nabla \tp|^2 $  vanishs. 
Finally, we again observe by an integration-by-parts rule and  since $\tu$ is a solenoidal vector field that the last line of the above equation vanishes. 
Note the the integration-by-parts rule for the last step initially only holds for more regular function, but may  be  extended by density arguments. 
%
%
%
%
The terms due to the nonconvex potential in~\eqref{firstest}, gives
\begin{align*}
-\int_\Omega & ( \f u  \varphi )\cdot \nabla  F'(\tp) - (\tu\cdot \nabla ) \tp F'(\varphi) \de \f x \\
={}& \int_\Omega ( \f u  (\tp -\varphi) )\cdot \nabla  F'(\tp) + (\tu\cdot \nabla )\tp \left (  F'(\varphi) - F'( \tp) - F''(\tp) ( \varphi - \tp) \right )  + ( \tu \cdot \nabla ) \tp F''(\tp) ( \varphi - \tp) \de \f x \\
={}& \int_\Omega ( (\f u-\tu)  (\tp - \varphi ))\cdot \nabla   F'(\tp) + (\tu\cdot \nabla )\tp \left (  F'(\varphi) - F'( \tp) - F''(\tp) ( \varphi - \tp) \right )  \de \f x 
\\&- \int_\Omega ( \tu  ( \varphi - \tp))\cdot \nabla  F'(\tp) - ( \tu \cdot \nabla ) \tp F''(\tp) ( \varphi - \tp) \de \f x 
 \,.
\end{align*}
The first equality is valid since $\f u$ and  $\tu$ are solenoidal functions. Indeed by $ ( \f u \cdot \nabla) \tp F' ( \tp )= ( \f u \cdot \nabla) F(\tp) $, we observe that the integral over this term vanishes. Similar, this holds for the term $ ( \tu\cdot \nabla ) F(\tp)= ( \tu \cdot \nabla ) \tp F'(\tp) $. 
The second equality is just a rearrangement.
The last line of the above equation vanishes again, which may be inferred from the fact that $\tu$ is a solenoidal vector field. 
Together, we  estimate by  {H\"older}'s inequality,
\begin{align*}
\int_\Omega& \nabla\left (  ( \tu\cdot \nabla )\tp\right ) \cdot \nabla \varphi + ( \f u  \varphi )\cdot \nabla \Delta \tp \de \f x
-\int_\Omega ( \nabla \varphi \otimes \nabla \varphi ): ( \nabla \tu)_{\sym} +( \nabla \tp \otimes \nabla \tp):( \nabla \f u )_{\sym}\de \f x \\
-\int_\Omega & ( \f u \varphi )\cdot \nabla   F'(\tp) + (\tu\cdot \nabla ) F'(\varphi) \de \f x \\
\leq{}& \| ( \nabla \tu )_{\sym,-}\|_{L^\infty(\Omega)} \| \nabla \varphi-\nabla \tp\|_{L^2(\Omega)}^2 + \| \Delta \tp - F'(\tp) \|_{W^{1,d}(\Omega) } \| \f u - \tu\|_{L^2(\Omega)} \|  \varphi -  \tp\|_{L^{2d/(d-2)}(\Omega)} \\
& +\int_\Omega (\tu\cdot \nabla )\tp \left (  F'(\varphi) - F'( \tp) - F''(\tp) ( \varphi - \tp) \right )  \de \f x 
\,.
\end{align*}
%
%
%
For the convection terms in the heat equation and the phase-field equation, we observe
\begin{align*}
\int_\Omega& ( \f u \cdot \nabla ) \tet ( f'_\delta (\theta ) - \varphi ) - \theta  ( \tu \cdot \nabla) ( f'_\delta (\tet) - \tp) + ( \f u \varphi )\cdot \nabla \tet - ( \tu \cdot \nabla) \tp  \theta \de \f x 
\\
={}& \int_\Omega ( \f u \cdot \nabla ) \tet ( f'_\delta (\theta ) - f'_\delta (\tet) ) - ( \tu \cdot \nabla) \tet    f''_\delta (\tet) ( \theta - \tet)   
 \de \f x 
\\
={}& \int_\Omega ( (\f u- \tu)  \cdot \nabla ) \tet ( f'_\delta (\theta ) -  f'_\delta  (\tet) )+ ( \tu \cdot \nabla ) \tet (  f'_\delta (\theta ) -  f'_\delta  (\tet)  -     f''_\delta (\tet) ( \theta - \tet))   \de \f x 
 \,.
\end{align*}
The first equation is valid due to a rearrangement using again the vanishing divergence of $\f u$ and $\tu$, \textit{i.e.}, the integrals over $(\f u\cdot \nabla) \tet f'_\delta (\tet) = ( \f u \cdot \nabla ) f_\delta (\tet) $ and 
$ \tet (\tu\cdot \nabla )f_\delta' (\tet) = \tet  f''_\delta (\tet) (\tu\cdot \nabla ) \tet = (\tu\cdot\nabla) \int_1^{\tet} r f''_\delta (r) \de r $
 vanish, respectively. The second equality follows again by adding and subtracting the appropriate terms. 
 Together, we find the estimate
 \begin{align*}
 \int_\Omega& ( \f u \cdot \nabla ) \tet ( f_\delta' (\theta ) - \varphi ) - \theta  ( \tu \cdot \nabla) ( f_\delta' (\tet) - \tp) + ( \f u \varphi)\cdot \nabla  \tet - ( \tu \cdot \nabla ) \tp \theta \de \f x 
\\
\leq{}& \| \nabla \tet\|_{L^\infty(\Omega) } \left ( \| \f u - \tu\|_{L^2(\Omega)}\left  \| f_\delta ' ( \theta ) - f_\delta ' ( \tet) \right \|_{L^2(\Omega)} \right ) \\&+
\|( \tu \cdot \nabla ) f'_\delta( \tet) \|_{L^\infty(\Omega)}  \int_\Omega  \left ( \theta -\tet - \left ( f''_\delta ( \tet) \right )^{-1} \left ( f'_\delta ( \theta ) - f_\delta'(\tet) \right )  \right ) \de \f x
 \,.
 \end{align*}
The remaining terms in~\eqref{firstest} including time derivatives, can be formally transformed to
\begin{align*}
 \t \tet& ( f_\delta' (\theta ) - \varphi) - \theta \t (  f_\delta' (\tet)-\tp) -\theta\t\tp 
 - \t \tet(  f_\delta' (\tet) - \tet  f_\delta''  (\tet) ) + \t \tet \varphi 
 \\
 ={}& \t \tet ( f_\delta' ( \theta) - f_\delta' ( \tet) ) - \t f_\delta '(\tet) ( \theta - \tet) 
 \\
 ={}&  -  \t f_\delta '(\tet)  \left ( \theta - \tet - \left ( f''_\delta ( \tet) \right )^{-1} \left ( f_\delta'(\theta ) - f'_\delta ( \tet) \right )\right )
 \,.
\end{align*}
Note that the two occurrences of $ \langle \t \varphi , \tet \rangle $ in~\eqref{firstest} already cancel each other. 
In the first equality all terms depending on $\varphi$ and $\tp$ vanish due to cancellations, the second equation follows by an application of the chain rule.
Consequently, we find 
\begin{align*}
\int_\Omega  \t \tet ( f_\delta'(\theta ) - \varphi) - \theta \t ( f_\delta'(\tet)-\tp) -\theta\t\tp - \t \tet( f_\delta'(\tet) - \tet f_\delta'' (\tet) ) + \t \tet \varphi
\de \f x \\
\leq  \|  \t f_\delta '(\tet) \|_{L^\infty}  \int_\Omega   \left ( \theta - \tet - \left ( f''_\delta ( \tet) \right )^{-1} \left ( f_\delta'(\theta ) - f'_\delta ( \tet) \right )\right ) \de \f x \,.
\end{align*}
Inserting everything back into~\eqref{firstest}, we may conclude
\begin{align}
\begin{split}
\mathcal{R}(\f u(t), \theta (t), \varphi (t) &|{}\tu(t),  \tet(t),\tp(t))  \\+ \int_0^t \int_\Omega& \left (  
\tet \frac{\nu(\theta)|(\nabla \f u)_{\sym} |^2 }{\theta} + \theta \frac{\nu(\tet) |( \nabla \tu)_{\sym}|^2}{\tet} - ( \nu(\theta ) + \nu(\tet) ) (\nabla \f u)_{\sym} : (\nabla \tu)_{\sym} 
    \right ) \de \f x  \de s \\
    + \int_0^t \int_\Omega& \left (  \tet\frac{| \nabla \mu|^2}{\theta}  + \theta \frac{| \nabla \tilde{\mu }|^2}{\tet} - 2 \nabla \mu \cdot \nabla \tilde{\mu}   \right )\de \f x  \de s \\
+ \int_0^t \int_\Omega& \left (   \tet  \kappa(\theta)    {| \nabla \log\theta|^2 } -    \kappa(\theta) {\nabla \log\theta }\cdot \nabla \tet +  \theta  \kappa(\tet)  {| \nabla\log \tet|^2} +    {\di (\kappa(\tet) \log \tet ) }   \theta   \right )\de \f x  \de s \\
\leq{}& \mathcal{R}(\f u_0 , \theta_0 , \varphi _0 | \tu_0 , \tet_0,\tp_0 ) \\&+ \int_0^t \int_\Omega \left ( \t \tp + (\tu\cdot \nabla )\tp\right ) \left (  F'(\varphi) - F'(\tp) - F'' (\tp) (\varphi - \tp)\right ) \de \f x \de s\\
& + \int_0^t \left \| (\nabla \tu )_{\sym} \right \|_{L^\infty(\Omega)}  \left (\| \f u -\tu\|_{L^2(\Omega)}^2 + \langle m , I \rangle \right ) \de s   
\\
&+\int_0^t  \| ( \nabla \tu )_{\sym}\|_{L^\infty(\Omega)} \| \nabla \varphi-\nabla \tp\|_{L^2(\Omega)}^2\de s \\ & +\int_0^t  \| \Delta \tp- F'(\tp)\|_{W^{1,d}(\Omega)} \| \f u - \tu\|_{L^2(\Omega)} c \| \nabla \varphi - \nabla \tp\|_{L^2(\Omega)}  \de s\\
&+  \int_0^t \| \nabla \tet\|_{L^\infty(\Omega) } \left ( \| \f u - \tu\|_{L^2(\Omega)}\left  \| f_\delta ' ( \theta ) - f_\delta ' ( \tet) \right \|_{L^2(\Omega)} \right )  \de s 
 \\
&+\int_0^t\|\t f_\delta '(\tet)  + ( \tu \cdot \nabla) f'_\delta( \tet) \|_{L^\infty(\Omega)}  \int_\Omega  \left ( \theta -\tet - \left ( f''_\delta ( \tet) \right )^{-1} \left ( f'_\delta ( \theta ) - f_\delta'(\tet) \right )  \right ) \de \f x \de s 
\\
&+ \frac{M}{2} \| \varphi - \tp \|_{(W^{1,\infty}(\Omega))^*} ^2 
\,.
 \end{split}\label{secondtest}
\end{align}

\subsection{Dissipative terms}
In this section, we consider the different dissipative terms arising in inequality~\eqref{firstest} and~\eqref{secondtest}.
Starting with the terms due to friction in the fluid, we observe by some manipulations that
\begin{align*}
\tet \frac{\nu(\theta)|(\nabla \f u)_{\sym} |^2 }{\theta}& + \theta \frac{\nu(\tet) |( \nabla \tu)_{\sym}|^2}{\tet} - ( \nu(\theta ) + \nu(\tet) ) (\nabla \f u)_{\sym} : (\nabla \tu)_{\sym} 
\\
={}& \nu(\theta ) \left ( \sqrt{\frac{\tet}{\theta}} ( \nabla \f u)_{\sym} - \sqrt{\frac{\theta}{\tet}} ( \nabla \tu )_{\sym} \right ) : \sqrt{\frac{\tet}{\theta}} ( \nabla \f u)_{\sym}
\\& - \nu(\tet) \left ( \sqrt{\frac{\tet}{\theta}} ( \nabla \f u)_{\sym} - \sqrt{\frac{\theta}{\tet}} ( \nabla \tu )_{\sym} \right ) :\sqrt{\frac{\theta}{\tet}} ( \nabla \tu )_{\sym} 
\\
={}& \nu(\theta ) \left | \sqrt{\frac{\tet}{\theta}} ( \nabla \f u)_{\sym} - \sqrt{\frac{\theta}{\tet}} ( \nabla \tu )_{\sym} \right |^2
\\& (\nu(\theta)- \nu(\tet)) \left ( \sqrt{\frac{\tet}{\theta}} ( \nabla \f u)_{\sym} - \sqrt{\frac{\theta}{\tet}} ( \nabla \tu )_{\sym} \right ) :\sqrt{\frac{\theta}{\tet}} ( \nabla \tu )_{\sym} \,.
%
%
%
\end{align*}
Applying Young's inequality in a standard manner, implies
\begin{align*}
\tet \frac{\nu(\theta)|(\nabla \f u)_{\sym} |^2 }{\theta}& + \theta \frac{\nu(\tet) |( \nabla \tu)_{\sym}|^2}{\tet} - ( \nu(\theta ) + \nu(\tet) ) (\nabla \f u)_{\sym} : (\nabla \tu)_{\sym} 
\\
\geq{}&  \nu (\theta)\frac{1}{2} \left | \sqrt{\frac{\tet}{\theta}} ( \nabla \f u)_{\sym} - \sqrt{\frac{\theta}{\tet}} ( \nabla \tu )_{\sym} \right |^2 - \frac{(\nu(\theta)- \nu(\tet))^2}{2\nu(\theta)} \frac{\theta}{\tet} |( \nabla \tu)_{\sym}|^2\,.
\end{align*}
Similar, but somehow simpler we find for the dissipative terms due to the chemical potential after some manipulations that
\begin{align*}
 \tet\frac{| \nabla \mu|^2}{\theta}  + \theta \frac{| \nabla \tilde{\mu }|^2}{\tet} - 2 \nabla \mu \cdot \nabla \tilde{\mu} = \left  | \sqrt\frac{\tet}{\theta} \nabla \mu - \sqrt\frac{\theta}{\tet}\nabla \tmu\right | ^2 
\,.
 \end{align*}

Concerning the terms due to the heat conduction, we first consider the case $\kappa ( \theta) = \kappa_0$, thus $\beta=0$ and also $\delta =0$. 
Note that $$\int_\Omega \tet \kappa (\tet) | \nabla \ln \tet |^2 + \tet \di (\kappa(\tet) \nabla \ln \tet ) \de \f x = \int_\Omega \di ( \kappa(\tet) \nabla \tet ) \de \f x = 0\,.$$
We observe with some algebraic transformations that
\begin{align*}
  \tet &   {| \nabla \log\theta|^2 } -    {\nabla \log\theta }\cdot \nabla \tet + ( \theta - 
\tet )  {| \nabla\log \tet|^2} +    {\Delta\log \tet } (  \theta -  \tet) \\
 &=     \tet  \nabla \log \theta\cdot (  \nabla \log \theta- \nabla \log \tet ) + ( \theta - \tet ) | \nabla \log \tet |^2 + \Delta \log \tet   ( \theta - \tet )   \\
 &= \tet \left ( | \nabla \log \theta - \nabla \log \tet |^2 + \nabla \log \tet \cdot(\nabla \log \theta - \nabla \log \tet ) \right ) \\ 
 &\quad + \left ( \theta - \tet - \tet ( \log \theta - \log \tet ) | \nabla \log \tet |^2  + \tet ( \log \theta - \log \tet ) | \nabla \log \tet|^2 \right )\\
& \quad +  \Delta \log \tet  ( \theta -\tet  - \tet ( \log \theta - \log \tet ))   + \Delta \log \tet    ( \tet ( \log \theta - \log \tet ) ) \,.
 \end{align*}
From an integration-by-parts on the last term, using the fact that $ \nabla \log \tet \cdot \f n = ( \nabla \tet \cdot \f n ) / \tet = 0$ vanishes on the boundary (see~\eqref{boundary}),  and the product rule, we may infer
\begin{align*}
 \int_\Omega \tet \nabla \log \tet \cdot(\nabla \log \theta - \nabla \log \tet ) + \tet ( \log \theta - \log \tet ) | \nabla \log \tet|^2\de \f x - \int_\Omega \nabla \log \tet \cdot  \nabla ( \tet ( \log \theta - \log \tet ) )\de \f x  = 0\,.
\end{align*}
We may conclude that
\begin{align*}
\kappa_0  \int_\Omega  \tet &   {| \nabla \log\theta|^2 } -    {\nabla \log\theta }\cdot \nabla \tet + ( \theta - 
\tet )  {| \nabla\log \tet|^2} +    {\Delta\log \tet } (  \theta -  \tet) 
\de \f x    \\
&= \kappa_0 \int_{\Omega}  \tet | \nabla \log \theta - \nabla \log \tet |^2 \de \f x  
\\
&\quad +\kappa_0 \int_{\Omega}  ( \theta - \tet - \tet ( \log \theta - \log \tet ) ) ( | \nabla \log \tet|^2 + \Delta  \log \tet ) \de \f x  
\,.
\end{align*}

For $ \beta \in (0,2]$ and $ 2\delta \leq \beta$, we find
\begin{align}\label{number}
\begin{split}
\int_\Omega \tet \theta^\beta & | \nabla \log \theta |^2 - \theta^\beta  \nabla \tet \nabla \log \theta + (\theta-\tet) \tet^\beta  | \nabla \log \tet |^2  + (\theta - \tet) \di ( \tet^\beta \nabla \log \tet)\de \f x  \\
={}& \frac{4}{\beta^2 }\int_\Omega \left ( \tet | \nabla {\theta}^{\beta/2}|^2 - \tet ^{1-\beta/2}{\theta}^{\beta/2}  \nabla {\tet}^{\beta/2}\nabla \theta^{\beta/2}  \right ) \de \f x 
\\&+ \frac{4}{\beta^2 }\int_\Omega \left ( ( \theta-\tet)  | \nabla {\tet}^{\beta/2}|^2 + \frac{\beta}{2} (\theta - \tet) \di ( \tet^{\beta/2} \nabla  \tet^{\beta/2}) \right ) \de \f x \
\,.
%
\end{split}
\end{align}
For the first line on the right-hand side of~\eqref{number}, we observe
\begin{align*}
\tet& | \nabla {\theta}^{\beta/2}|^2 - \tet ^{1-\beta/2}{\theta}^{\beta/2}  \nabla {\tet}^{\beta/2}\nabla \theta^{\beta/2}  
\\
= {}& \tet | \nabla {\theta}^{\beta/2}  - \nabla {\tet}^{\beta/2} |^2+ \tet ^{1-\beta/2}( \tet ^{\beta/2}- {\theta}^{\beta/2} )  \nabla {\tet}^{\beta/2}\nabla \theta^{\beta/2} + \tet \nabla {\tet}^{\beta/2} ( \nabla {\theta}^{\beta/2}  - \nabla {\tet}^{\beta/2}) 
\\
= {} & \tet | \nabla {\theta}^{\beta/2}  - \nabla {\tet}^{\beta/2} |^2+ \tet ^{1-\beta/2}( \tet ^{\beta/2}- {\theta}^{\beta/2} )  \nabla {\tet}^{\beta/2}(\nabla \theta^{\beta/2}- \nabla {\tet}^{\beta/2}) 
\\ &  + \tet \nabla {\tet}^{\beta/2} ( \nabla {\theta}^{\beta/2}  - \nabla {\tet}^{\beta/2})+  \tet ^{1-\beta/2}( \tet ^{\beta/2}- {\theta}^{\beta/2} ) | \nabla {\tet}^{\beta/2}|^2 \,,
\end{align*}
and for the second line on the right-hand side of~\eqref{number}, we observe 
\begin{align*}
( \theta& -\tet)  | \nabla {\tet}^{\beta/2}|^2 + \frac{\beta}{2} (\theta - \tet) \di ( \tet^{\beta/2} \nabla  \tet^{\beta/2})
\\
={}& \left ( \theta - \tet -\left  ( f_\delta''(\tet)\right )^{-1}\left ( f'_\delta(\theta) - f'_\delta ( \tet ) \right ) 
\right ) (| \nabla {\tet}^{\beta/2}|^2 + \frac{\beta}{2} \di ( \tet^{\beta/2} \nabla  \tet^{\beta/2})) 
\\
&+\left  ( f_\delta''(\tet)\right )^{-1}\left ( f'_\delta(\theta) - f'_\delta ( \tet ) \right )  (| \nabla {\tet}^{\beta/2}|^2 + \frac{\beta}{2} \di ( \tet^{\beta/2} \nabla  \tet^{\beta/2}))
\\
={}&\left  ( \theta - \tet -\left  ( f_\delta''(\tet)\right )^{-1}\left ( f'_\delta(\theta) - f'_\delta ( \tet ) \right ) \right  ) (| \nabla {\tet}^{\beta/2}|^2 + \frac{\beta}{2} \di ( \tet^{\beta/2} \nabla  \tet^{\beta/2})) 
\\
&+\tet^{\beta/2-1}\left  ( f_\delta''(\tet)\right )^{-1}\left ( f'_\delta(\theta) - f'_\delta ( \tet ) \right )   \frac{\beta}{2} (\tet^{1-\beta/2}| \nabla {\tet}^{\beta/2}|^2 + \di ( \tet \nabla  \tet^{\beta/2}))\,.
\end{align*}
To combine the previous two equations, we observe that the two second lines on the right-hand sides may be related via an integration-by-parts
\begin{align*}
\int_\Omega \tet \nabla {\tet}^{\beta/2} ( \nabla {\theta}^{\beta/2}  - \nabla {\tet}^{\beta/2}) \de \f x =  \int_\Omega ( \tet ^{\beta/2}- {\theta}^{\beta/2} )  \di ( \tet \nabla  \tet^{\beta/2}) \de \f x 
\end{align*}
and the algebraic relation
\begin{align*}
\tet ^{1-\beta/2}&( \tet ^{\beta/2}- {\theta}^{\beta/2} ) | \nabla {\tet}^{\beta/2}|^2 +  ( \tet ^{\beta/2}- {\theta}^{\beta/2} )  \di ( \tet \nabla  \tet^{\beta/2}) 
\\
&+\tet^{\beta/2-1} \left  ( f_\delta''(\tet)\right )^{-1}\left ( f'_\delta(\theta) - f'_\delta ( \tet ) \right )   \frac{\beta}{2} (\tet^{1-\beta/2}| \nabla {\tet}^{\beta/2}|^2 + \di ( \tet \nabla  \tet^{\beta/2}))
\\
={}& - (\tet^{1-\beta/2}| \nabla {\tet}^{\beta/2}|^2 + \di ( \tet \nabla  \tet^{\beta/2})) \left (  {\theta}^{\beta/2}  - \tet ^{\beta/2} -\frac{\beta}{2}  \tet^{\beta/2-1} \left  ( f_\delta''(\tet)\right )^{-1}\left ( f'_\delta(\theta) - f'_\delta ( \tet ) \right )   \right )
\\
={}& - ( \tet^{\beta/2} \di ( \tet^{1-\beta/2} \nabla  \tet^{\beta/2})) \left (  {\theta}^{\beta/2}  - \tet ^{\beta/2} -\frac{\beta}{2}  \tet^{\beta/2-1} \left  ( f_\delta''(\tet)\right )^{-1}\left ( f'_\delta(\theta) - f'_\delta ( \tet ) \right )   \right )
\,.
\end{align*}
Taking everything together into~\eqref{number}, we may conclude by Young's inequality that
\begin{align*}
\int_\Omega \tet \theta^\beta & | \nabla \log \theta |^2 - \theta^\beta  \nabla \tet \nabla \log \theta + (\theta-\tet) \tet^\beta  | \log \tet |^2  + (\theta - \tet) \di ( \tet^\beta \nabla \log \tet)\de \f x  \\
={}&\frac{4}{\beta^2 } \int_\Omega  \tet | \nabla {\theta}^{\beta/2}  - \nabla {\tet}^{\beta/2} |^2+ \tet ^{1-\beta/2}( \tet ^{\beta/2}- {\theta}^{\beta/2} )  \nabla {\tet}^{\beta/2}(\nabla \theta^{\beta/2}- \nabla {\tet}^{\beta/2})  
\de \f x 
\\& +\frac{4}{\beta^2 } \int_\Omega\left  ( \theta - \tet -\left  ( f_\delta''(\tet)\right )^{-1}\left ( f'_\delta(\theta) - f'_\delta ( \tet ) \right ) \right  ) (| \nabla {\tet}^{\beta/2}|^2 + \frac{\beta}{2} \di ( \tet^{\beta/2} \nabla  \tet^{\beta/2}))  \de \f x 
\\ &-\frac{4}{\beta^2 } \int_\Omega  (\tet^{\beta/2} \di ( \tet^{1-\beta/2} \nabla  \tet^{\beta/2})) \left (  {\theta}^{\beta/2}  - \tet ^{\beta/2} -\frac{\beta}{2}  \tet^{\beta/2-1}\left  ( f_\delta''(\tet)\right )^{-1}\left ( f'_\delta(\theta) - f'_\delta ( \tet ) \right )  \right )\de \f x \\
\geq {}&\frac{2}{\beta^2 } \int_\Omega  \tet | \nabla {\theta}^{\beta/2}  - \nabla {\tet}^{\beta/2} |^2 -  \tet ^{1-\beta}| \tet ^{\beta/2}- {\theta}^{\beta/2} |^2|  \nabla {\tet}^{\beta/2} |^2 \de \f x 
\\& +\frac{4}{\beta^2 } \int_\Omega\left  ( \theta - \tet -\left  ( f_\delta''(\tet)\right )^{-1}\left ( f'_\delta(\theta) - f'_\delta ( \tet ) \right ) \right  ) (| \nabla {\tet}^{\beta/2}|^2 + \frac{\beta}{2} \di ( \tet^{\beta/2} \nabla  \tet^{\beta/2}))  \de \f x 
\\ &-\frac{4}{\beta^2 } \int_\Omega  (\tet^{\beta/2} \di ( \tet^{1-\beta/2} \nabla  \tet^{\beta/2})) \left (  {\theta}^{\beta/2}  - \tet ^{\beta/2} -\frac{\beta}{2}  \tet^{\beta/2-1}\left  ( f_\delta''(\tet)\right )^{-1}\left ( f'_\delta(\theta) - f'_\delta ( \tet ) \right )  \right )\de \f x 
\,.
\end{align*}
To handle the difference of the temperatures in the $L^2$-norm for~$\beta>1-\delta $, we need to absorb some parts into the dissipative terms. 
Via an Gagliardo Nirenberg inequality, we observe
\begin{align*}
\| {\theta}^{\beta/2}  - \tet ^{\beta/2}\|_{L^2}^2 \leq{}& c \left ( \| \nabla {\theta}^{\beta/2}  - \nabla \tet ^{\beta/2} \|_{L^2}^{2d/(d+2)} \| {\theta}^{\beta/2}  - \tet ^{\beta/2} \|_{L^1}^{4/(d+2)} + \|{\theta}^{\beta/2}  - \tet ^{\beta/2}\|_{L^1}^2 \right ) \\
\leq {}& \frac{1}{\beta^2} \|  \nabla {\theta}^{\beta/2}  - \nabla \tet ^{\beta/2} \|_{L^2} ^2 + C_\varepsilon \| {\theta}^{\beta/2}  - \tet ^{\beta/2} \|_{L^1}^2 \,.
\end{align*}
From Lemma~\ref{lem:diff} and Lemma~\ref{lem:relpos}, we find for $\beta \in (4\delta , 2-2\delta]$  that
\begin{align*}
\int_\Omega \tet \theta^\beta & | \nabla \log \theta |^2 - \theta^\beta  \nabla \tet \nabla \log \theta + (\theta-\tet) \tet^\beta  | \log \tet |^2  + (\theta - \tet) \di ( \tet^\beta \nabla \log \tet)\de \f x  \\
\geq {}&\frac{1}{\beta^2 } \int_\Omega  \tet | \nabla {\theta}^{\beta/2}  - \nabla {\tet}^{\beta/2} |^2 \de \f x  
 -c  \int_\Omega \Lambda_\delta ( \theta | \tet)  \left ( \left \|  \nabla {\tet}^{\beta/2}\right \|^2_{L^\infty(\Omega)} + \| \Delta   \tet^{\beta/2}\|_{L^\infty(\Omega)} \right  )  \de \f x \,.
\end{align*}

\subsection{Nonconvex contribution}
This section concerns the last term on the right-hand side of~\eqref{firstest}.
\begin{align*}
\frac{1}{2} \t \| \varphi - \tp \|_{(W^{1,\infty}(\Omega))^*} ^2 =  \| \varphi - \tp \|_{(W^{1,\infty}(\Omega))^*}  \sup_{\| \Phi \|_{W^{1,\infty}}=1} \left \langle \t \varphi - \t \tp , \Phi \right \rangle \,.
\end{align*}
Since the equation~\eqref{phaseeq} holds, we may find
\begin{align}
\begin{split}
 \left \langle \t \varphi - \t \tp , \Phi \right \rangle ={}& -\int_\Omega (\f u \varphi - \tu \tp ) \cdot 
\nabla \Phi + (\nabla \mu -\nabla \tilde \mu ) \cdot \nabla \Phi \de \f x 
\\
={}&- \int_\Omega ((\f u-\tu)  \varphi + \tu(\varphi - \tp) ) \cdot 
\nabla \Phi 
\de \f x \\
&
- \int_\Omega \left (
\sqrt\frac{\tet}{{\theta}} \nabla \mu- \sqrt\frac{\theta}{{\tet}} \nabla \tilde \mu
\right )
\sqrt{
\frac
{\theta}
{\tet} 
}
 \cdot \nabla \Phi+ \nabla \tilde{ \mu} \left (\frac{{\theta }}{{\tet}}-1\right ) \cdot \nabla \Phi \de \f x 
\\
\leq{}&\left ( \| \f u -\tu \|_{L^2(\Omega)} \| \varphi \|_{L^2(\Omega)} + \| \varphi - \tp \|_{L^2(\Omega)} \| \tu \|_{L^2(\Omega)}  \right ) \| \nabla \Phi\|_{L^\infty(\Omega)} \\&+\left ( \left  \| \sqrt\frac{\tet}{{\theta}} \nabla \mu-\sqrt \frac{\theta}{{\tet}} \nabla \tilde \mu \right \|_{L^2(\Omega)} \left \| \frac{\theta}{\tet} \right \|_{L^1(\Omega)} ^{1/2} + \left \| \frac{\nabla \tilde\mu }{{\tet}}\right \|_{L^\infty (\Omega)} \| {\theta} -{\tet} \|_{L^1(\Omega)} \right ) \| \nabla \Phi\|_{L^\infty(\Omega)}\,.
\end{split}\label{nonconvex}
\end{align}
Since $\varphi$ and $\tp$ have the same mean, the Poincar\'e inequality holds for its difference
since $\int_\Omega \varphi - \tp \de \f x= 0$, \textit{i.e.}, 
\begin{align*}
\| \varphi - \tp \|_{L^2(\Omega)} \leq c \| \nabla \varphi - \nabla \tp \|_{L^2(\Omega)} \,.
\end{align*}
such that
\begin{align*}
\frac{1}{2} \t \| \varphi - \tp \|_{(W^{1,\infty}(\Omega))^*} ^2 \leq{}&   \| \varphi - \tp \|_{(W^{1,\infty}(\Omega))^*}   \left ( 
\| \f u -\tu \|_{L^2(\Omega)} \| \varphi \|_{L^2(\Omega)}+ c \| \nabla \varphi -\nabla  \tp \|_{L^2(\Omega)} \| \tu \|_{L^2(\Omega)}\right ) \\&+  \| \varphi - \tp \|_{(W^{1,\infty}(\Omega))^*}     \left  \| \sqrt\frac{\tet}{{\theta}} \nabla \mu- \sqrt\frac{\theta}{{\tet}} \nabla \tilde \mu \right \|_{L^2(\Omega)} \left \| \frac{\theta}{\tet} \right \|_{L^1(\Omega)} ^{1/2} 
\\&+  \| \varphi - \tp \|_{(W^{1,\infty}(\Omega))^*}   \left \| \frac{\nabla \tilde\mu }{{\tet}}\right \|_{L^\infty (\Omega)} \| {\theta} -{\tet} \|_{L^1(\Omega)} 
 \,.
\end{align*}


Combining all the estimates, we conclude that
\begin{align*}
\mathcal{R}&(\f u(t), \theta (t), \varphi (t) |{}\tu(t),  \tet(t),\tp(t))
+ \int_0^t { \kappa_0} \int_{\Omega}  \tet | \nabla \log \theta - \nabla \log \tet |^2 \de \f x  \\
+& \int_0^t \int_\Omega  \nu (\theta)\frac{1}{2} \left | \sqrt{\frac{\tet}{\theta}} ( \nabla \f u)_{\sym} - \sqrt{\frac{\theta}{\tet}} ( \nabla \tu )_{\sym} \right |^2 \de \f x  \de s
\\+&\int_0^t \int_\Omega \frac{2\kappa_1}{\beta^2 }  \tet | \nabla {\theta}^{\beta/2}  - \nabla {\tet}^{\beta/2} |^2 +    \left | \sqrt\frac{\tet}{\theta}\nabla \mu - \sqrt\frac{\theta}{\tet}\nabla \tmu \right |^2 \de \f x  \de s 
 \\
\leq{}&\mathcal{R}(\f u_0 , \theta_0 , \varphi _0 | \tu_0 , \tet_0,\tp_0 ) + \int_0^t \int_\Omega(\t \tp+ ( \tu \cdot \nabla) \tp)  \left (  F'(\varphi) - F'(\tp) - F'' (\tp) (\varphi - \tp)\right ) \de \f x \de s\\
& +  \int_0^t \left \| (\nabla \tu )_{\sym,-} \right \|_{L^\infty(\Omega)}  \left (\| \f u -\tu\|_{L^2(\Omega)}^2 + \langle m , I \rangle\right )  \de s   
\\
&+\int_0^t  \| ( \nabla \tu )_{\sym,-}\|_{L^\infty(\Omega)} \| \nabla \varphi-\nabla \tp\|_{L^2(\Omega)}^2 \de s \\ &+c\int_0^t \| \Delta \tp- F'(\tp)\|_{W^{1,d}(\Omega)} \left ( \| \f u - \tu\|_{L^2(\Omega)	} ^2 + \| \nabla \varphi - \nabla \tp\|_{L^2(\Omega)} ^2 \right ) \de s
 \\
& +\int_0^t\|\t f_\delta '(\tet)  + ( \tu \cdot \nabla) f'_\delta( \tet) \|_{L^\infty(\Omega)} \int_\Omega   \left ( \theta - \tet - \left ( f''_\delta ( \tet) \right )^{-1} \left ( f_\delta'(\theta ) - f'_\delta ( \tet) \right )\right ) \de \f x\de s 
\\
\\& +c \int_0^t \int_\Omega \Lambda_\delta ( \theta | \tet)  \left ( \left \|  \nabla {\tet}^{\beta/2}\right \|^2_{L^\infty(\Omega)} + \| \Delta   \tet^{\beta/2}\|_{L^\infty(\Omega)} \right  )  \de \f x \de s 
\\&+M \int_0^t
\| \varphi - \tp \|_{(W^{1,\infty}(\Omega))^*}   \left ( 
\| \f u -\tu \|_{L^2(\Omega)} \| \varphi \|_{L^2(\Omega)}+ c \| \nabla \varphi -\nabla  \tp \|_{L^2(\Omega)} \| \tu \|_{L^2(\Omega)}\right ) \de s 
\\
&+M  \int_0^t  \| \varphi - \tp \|_{(W^{1,\infty}(\Omega))^*}   \left (  \left  \| \sqrt\frac{\tet}{{\theta}} \nabla \mu-\sqrt \frac{\theta}{{\tet}} \nabla \tilde \mu \right \|_{L^2(\Omega)} \left \| \frac{\theta}{\tet} \right \|_{L^1(\Omega)} ^{1/2} + \left \| \frac{\nabla \tilde\mu }{{\tet}}\right \|_{L^\infty (\Omega)} \| {\theta} -{\tet} \|_{L^1(\Omega)} 
\right ) \de s 
\\
&+  \int_0^t \frac{1}{2} \| \nabla \tet\|_{L^\infty (\Omega)} \left ( \| \f u - \tu\|_{L^2(\Omega)}^2+ \left  \| f_\delta ' ( \theta ) - f_\delta ' ( \tet) \right \|_{L^2(\Omega)} ^2 \right )  + \int_\Omega \frac{(\nu(\theta)- \nu(\tet))^2}{2} \frac{\theta}{\tet} |( \nabla \tu)_{\sym}|^2\de \f x \de s 
\,.
\end{align*}
The last line may be estimated by the relative energy due to Lemma~\ref{lem:log} or Lemma~\ref{lem:diff} and Lemma~\ref{lem:fenchel}. 
The last term in the second to the last line, we may estimate by $$ \| \theta - \tet \| _{L^1(\Omega)} \leq \int_\Omega \theta - \tet - ( f_\delta ''(\tet))^{-1} ( f'_\delta (\theta )- f_\delta '(\tet)) \de \f x + c \| f'_\delta ( \theta ) - f'_\delta (\tet)\|_{L^1(\Omega)} \,,$$
which can be further estimated by Lemma~\ref{lem:relpos} and~\ref{lem:log}.  
From the calculation in \cite[Section 4.3]{larosc}, we find for a function fulfilling Hypothesis~\ref{hypo} that
\begin{multline}\label{st82}
 \int_0^t \int_\Omega | \tp_t + ( \tu \cdot \nabla) \tp | (F'' ( \tp ) ( \tp - \varphi) - F' ( \tp ) + F' ( \varphi)) \de \f x  \de s\\
  \le c \int_0^t \| \tp_t+ ( \tu \cdot \nabla) \tp \|_{L^\infty(\Omega)} \mathcal{E}( \f u, \theta , \varphi |\tu,  \tet , \tp) \de s\,.
\end{multline}

Applying Gronwall's estimate implies the relative energy inequality and thus Proposition~\ref{prop:rel}. 
%
\begin{proof}[Proof of Theorem~\ref{thm:diss}]
In order to prove Theorem~\ref{thm:diss} we have to show the relative energy inequality~\eqref{relenenergy} for every test function $(\tu,\tet,\tp)\in\mathcal{Y}$, which is not assumed to be a solution anymore. 
This can be done by adapting the proof of the previous section and adding and simultaneously subtracting the equations for~$(\tu,\tet,\tp)\in\mathcal{Y}$ in~\eqref{energyestimates},~\eqref{momentumtest},~\eqref{entrotestzwei}~\eqref{strongeqphi}, and~\eqref{nonconvex}. 
This gives rise to the solution operator $\mathcal{A}$ defined in~\eqref{operatorA} such that we infer
\begin{multline*}
\mathcal{R}(\f q  |\tilde {\f q}) (t) + \frac{1}{2}\langle m_t , I \rangle + \int_0^t \mathcal{W} (\f q  |\tilde {\f q})  e^{\int_s^t  \mathcal{K} (\tilde {\f q}) \de \tau } \de s 
\leq 
\mathcal{R}(\f q  |\tilde {\f q}) (0) e^{\int_0^t  \mathcal{K} (\tilde {\f q}) \de s }
\\+ \int_0^t \left (  \left \langle \mathcal{A}(\tilde {\f q})  , \begin{pmatrix}
\tu - \f u \\\tet-\theta
 \\ \tilde{\mu}-\mu 
 \end{pmatrix} \right \rangle  +  M \| \varphi - \tp \|_{( W^{1,\infty}(\Omega))^*} \| \mathcal{A}_3(\tilde {\f q})    \|_{( W^{1,\infty}(\Omega))^*}  \right )  e^{\int_s^t  \mathcal{K} (\tilde {\f q}) \de \tau } \de s \,.
 \end{multline*}
Observing that $\langle m_t , I \rangle$ is non-negative and may be estimated from below by zero, this implies the inequality~\eqref{relenenergy} and thus the assertion.

\end{proof}

\small
\bibliographystyle{abbrv}

\end{document}